\documentclass[10pt,reqno]{amsart}
\usepackage[a4paper,top=3 cm,bottom=3 cm,left=2.5 cm,right=2.5 cm]{geometry}

\usepackage[utf8]{inputenc}  
\usepackage[T1]{fontenc}
\usepackage{pgfplots}
\usepackage{subfig}
\usepackage{helvet}
\usepackage{graphicx}
\usepackage{xcolor}
\usepackage{enumitem} 
\usepackage{mathrsfs} 
\usepackage{amsmath,amssymb, mathtools}
\usepackage{hyperref}
\usepackage{cleveref}
\usepackage{dsfont}

\sloppy 


\newcommand{\wconv}{\rightharpoonup}
\newcommand{\wconvs}{\overset{*}{\rightharpoonup}}
\DeclareMathOperator{\diver}{div}

\def\R{\mathbb R}
\def\N{\mathbb N}

\def\to{\rightarrow}


\newtheorem{theorem}{Theorem}[section]

\newtheorem{corollary}{Corollary}[section]
\newtheorem{lemma}{Lemma}[section]

\newtheorem{remark}{Remark}[section]

\newtheorem{proposition}{Proposition}[section]


\usepackage{titletoc}


\makeatletter
\newcommand\thankssymb[1]{\textsuperscript{\@fnsymbol{#1}}}

\makeatother

\begin{document}
                
\title[Uniform Turnpike Property and Singular Limits]{Uniform Turnpike Property and Singular Limits}

\author[M. Hernández]{Martín Hernández\thankssymb{2}}
\email{martin.hernandez@fau.de}

\author[E. Zuazua]{Enrique Zuazua \thankssymb{1}\thankssymb{2}\thankssymb{3}}

\thanks{\thankssymb{2}
Chair for Dynamics, Control, Machine Learning, and Numerics, Alexander von Humboldt-Professorship, Department of Mathematics,  Friedrich-Alexander-Universit\"at Erlangen-N\"urnberg,
91058 Erlangen, Germany.}

\thanks{\thankssymb{1} 
 Departamento de Matem\'{a}ticas,
Universidad Aut\'{o}noma de Madrid,
28049 Madrid, Spain.
}

\thanks{\thankssymb{3} 
Chair of Computational Mathematics, Fundaci\'{o}n Deusto. Av. de las Universidades, 24,
48007 Bilbao, Basque Country, Spain.
}
\email{\texttt{enrique.zuazua@fau.de}}

\subjclass[2020]{49K20, 93C20, 49N05, 35B27.}
\keywords{Turnpike property; Rapidly oscillating linear parabolic equation; Optimal control problems; Long time
behavior; Uniform controllability;
Singular limits problems.}


\begin{abstract}

Motivated by singular limits for long-time optimal control problems, we investigate a class of parameter-dependent parabolic equations. First, we prove a turnpike result, uniform with respect to the parameters within a suitable regularity class and under appropriate bounds. The main ingredient of our proof is the justification of the uniform exponential stabilization of the corresponding Riccati equations, which is derived from the uniform null control properties of the model. 

Then, we focus on a heat equation with rapidly oscillating coefficients. In the one-dimensional setting, we obtain a uniform turnpike property with respect to the highly oscillatory heterogeneous medium. Afterward, we establish the homogenization of the turnpike property.  
Finally, our results are validated by numerical experiments.
\end{abstract}

\maketitle
\begin{center}
    \emph{Dedicated to Shi Jin on his 60th birthday, with friendship and admiration.}
\end{center}

\tableofcontents


\section{Introduction}

In the context of long-time horizon optimal control, the turnpike property ensures that optimal controls and solutions remain close to the optimal solution of the corresponding stationary optimal control problem most of the time. This stationary path is called the turnpike, which refers to the fastest route linking points that are far away enough. The turnpike phenomenon has been extensively studied for different equations in recent years (see \cite{longtime,MHSZRL,MR3780737,TURNPIKE_ONDA,z_turnpike,WZ3,no_lineal_turnpike,Integral_and_measure,remarks,Sensitivity_Analysis,Exponential_sensitivity_1,Esteve_Yag_e_2022}, and the references therein, for example). An extended and comprehensive survey can be found in \cite{geshkovski_zuazua_2022}.  In particular, it is well understood that the turnpike property relies on two ingredients: firstly, the cost functional must penalize both the state and control; secondly, the system must be controllable or stabilizable.

Building on the work of Porretta and Zuazua \cite{longtime}, we delve into the turnpike property within the context of parameter-dependent parabolic optimal control problems. We prove that when the null controllability holds, uniformly with respect to the parameters, the turnpike property is uniform as well.

Previous analysis of optimal control problems governed by parameter-dependent partial differential equations can be found in \cite{MR3076074} and the references included therein. The connections between parameterized control problems and the turnpike property have also been analyzed in \cite{MR3905433}, where greedy approximate algorithms for parameterized control problems have been developed. See also \cite{lazar2022greedy}.

This article complements the existing literature by showing that the turnpike property is uniform and enjoys homogenization properties in a suitable context.

Throughout this article, we primarily concentrate on parabolic equations to streamline the presentation. Nevertheless, our methodology can be extended to other models, such as finite-dimensional systems or wave-type equations, provided that the uniform null control property, with respect to the relevant parameters,  is satisfied.

\subsection{Problem formulation}
Let $\Omega$ be a bounded Lipschitz domain in $\R^n$, $n\geq 1$, and the time horizon $T>0$. We define the following optimal control problem
\begin{align}\label{evolutive_control_problem}
    \min_{ f\in L^{2}(0,T;\Omega)}\biggr\{ J^T( f)=\frac{1}{2}\int_0^T\left( \| f(\cdot,t)\|_{L^2(\Omega)}^2 +\| y(\cdot,t)-y_d(\cdot)\|^2_{L^2(\Omega)}\right)dt\biggr\},
\end{align}
where $ y$ is the solution of the parabolic equation
\begin{align}\label{rapidly_ocilation_general_heat}
   \begin{cases}
     y_t -\diver( a(x) \nabla   y)+b(x) \cdot \nabla   y + p(x) y=\chi_\omega  f\quad &(x,t)\in\Omega\times (0,T),\\
      y(x,t)=0 &(x,t)\in\partial\Omega\times (0,T),\\
      y(x,0)=y_0&x\in\Omega.
    \end{cases}
\end{align}
Here, $y_d\in L^2(\Omega)$ is a time-independent target, $ y$ is the state, $ f\in L^{2}(0,T;\Omega)$ is the control, and $y_0\in L^2(\Omega)$ is the initial condition. The open set $\omega\subset\Omega$ is nonempty, and $\chi_\omega$ denotes the characteristic function of the set $\omega$ where the control is being applied. We denote by $( y, f)$ the optimal time-dependent pair of \eqref{evolutive_control_problem}, which, of course, depends also in the length $T$ of the time-horizon.

We assume that the coefficients $(a,b,p)$ are bounded in the class
\begin{align}\label{class_definition}
    \mathscr{C}:=W^{1,\infty}(\Omega)\times \left(L^\infty(\Omega)\right)^n\times L^\infty(\Omega),
\end{align}
and satisfy the uniform ellipticity condition in the principal part, i.e.
\begin{align}\label{elipticity_of_a}
    0<a_0\leq a(x) \quad \text{a.e. in }\Omega. 
\end{align}
Then, by the classical global Carleman inequalities introduced by Fursikov and Imanuvilov in \cite{MR1406566}, we can guarantee the uniform null controllability of the system \eqref{rapidly_ocilation_general_heat}, that is, for any $T>0$ and $y_0\in L^2(\Omega)$, there exists  $ f\in L^2(0,T;\Omega)$ such that we can drive to zero the solution of \eqref{rapidly_ocilation_general_heat} in time $T$, the control $f$ being bounded in $L^2(0,T;\Omega)$ by a constant independent of the coefficients within the considered class (see Theorem \ref{teo_controlabilidad_uniforme_0} below).

Let us now consider the stationary optimal control problem
\begin{align}\label{stationary_var_problem}
    \min_{ \overline{f} \in  L^2(\Omega)}\biggr\{J^s( \overline{f})= \frac{1}{2}\biggr(\| \overline{f}(\cdot)\|_{L^2(\Omega)}^2+\|\overline{y}(\cdot)-y_d(\cdot)\|^{2}_{L^2(\Omega)}\biggr)\biggr\}.
\end{align}
Here $\overline{y}$ solves the elliptic equation
\begin{align}\label{stationry_system}
   \begin{cases}
        -\diver( a(x) \nabla  \overline{y})+b(x) \cdot\nabla  \overline{y} + p(x)\overline{y}=\chi_\omega  \overline{f}\quad & x\in\Omega,\\
       \overline{y}(x)=0 &x\in\partial\Omega,
   \end{cases}
\end{align}
where $y_d\in L^2(\Omega)$ is the same target of the evolution control problem \eqref{evolutive_control_problem}, and the coefficients are the same as in \eqref{rapidly_ocilation_general_heat}.

To avoid additional technical difficulties and ensure the existence and uniqueness of \eqref{stationry_system}, we assume that the coefficients $a$, $b$, and $p$ are such that
\begin{align}\label{condition_kernel}
   \operatorname{Ker}(\mathcal{A}^*) = \{0\},
\end{align}
where $\mathcal{A}^*$ is the adjoint operator of
\begin{align}\label{operator_A}
    \mathcal{A}= -\diver( a(x) \nabla  \,\cdot\,)+b(x) \nabla  \,\cdot\, + p(x)\,\cdot\,,
\end{align}
that is, 
\begin{align}\label{operator_A_adjoint}
\mathcal{A}^*= -\diver( a(x) \nabla\,\cdot\,)-\diver(b(x)\,\cdot\,)+p(x)\,\cdot\,.
\end{align}
Denote by $(\overline{y}, \overline{f})$ the optimal pair of \eqref{stationary_var_problem}. Although hypothesis \eqref{condition_kernel} does not guarantee that the solution of \eqref{stationry_system} is uniformly bounded in $H^1(\Omega)$ for a fixed right-hand side term, by assuming condition \eqref{elipticity_of_a} and that the coefficients $(a,\, b,\,p)$ are bounded in $\mathscr{C}$, then extra bounds that the coercivity of $J^s$ yield, ensure that the optimal states $\overline{y}$ are uniformly bounded in $H^1(\Omega)$, see Lemma \ref{lemma_1}. This is an essential aspect of the uniform turnpike property that requires all static optimal states and controls to be uniformly bounded.

Our first main result guarantees the exponential uniform turnpike property in this setting. Namely, we have the following theorem. 

\begin{theorem}[Uniform turnpike property for coefficients bounded in $\mathscr{C}$]\label{TH_TURNPIKE_EXP}   
   Let us assume that the coefficients $(a,\,b,\,p)$ are bounded in $\mathscr{C}$ and that \eqref{elipticity_of_a} and \eqref{condition_kernel} are fulfilled. Consider the time-dependent optimal pairs $( y, f)$ and the static ones $(\overline{y}, \overline{f})$, of problems \eqref{evolutive_control_problem} and \eqref{stationary_var_problem}, respectively. Then, there exist two positive constants $C$ and $\mu$ independent of $T\geq 1$ and the coefficients $(a,\,b,\,p)$ in this class such that
\begin{align}\label{ineq_exponential_turnpike}
    \| y(\cdot,t)-\overline{y}(\cdot)\|_{L^2(\Omega)}+\| f(\cdot,t)- \overline{f}(\cdot)\|_{L^2(\Omega)}\leq C\left(\|y_0\|_{L^2(\Omega)}+\|y_d\|_{L^2(\Omega)}\right)\biggr(e^{-\mu t}+e^{-\mu(T-t)}\biggr),
\end{align}
for every $t\in(0,T)$, $y_0\in L^2(\Omega)$, and coefficients $(a,\,b,\,p)$  in this class.

\end{theorem}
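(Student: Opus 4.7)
My approach follows the Riccati-operator strategy of Porretta and Zuazua, with the extra challenge of ensuring that all constants can be chosen independent of $(a,b,p)\in\mathscr{C}$.

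\textbf{Step 1: optimality systems and their difference.} First I would write the first-order optimality conditions for \eqref{evolutive_control_problem} and \eqref{stationary_var_problem}. Denoting by $\phi$ the adjoint state of $y$ and by $\overline{\phi}$ the one of $\overline{y}$, the Pontryagin principle gives $f=-\chi_\omega\phi$ and $\overline{f}=-\chi_\omega\overline{\phi}$, together with a forward-backward system for $(y,\phi)$ and an elliptic system for $(\overline{y},\overline{\phi})$. Setting $z=y-\overline{y}$ and $\psi=\phi-\overline{\phi}$, subtraction yields the homogeneous coupled system
\begin{align*}
z_t+\mathcal{A}z=-\chi_\omega\psi,\qquad -\psi_t+\mathcal{A}^*\psi=z,
\end{align*}
with $z(0)=y_0-\overline{y}$ and $\psi(T)=-\overline{\phi}$, so the turnpike estimate reduces to a dichotomy estimate for an autonomous forward-backward system driven only by its temporal boundary data. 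The announced Lemma \ref{lemma_1}, together with its analogue for $\overline{\phi}$, controls $\overline{y}$ and $\overline{\phi}$ uniformly in $\mathscr{C}$ by $\|y_d\|_{L^2(\Omega)}$.

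\textbf{Step 2: Riccati decoupling.} Next I would introduce the minimal nonnegative self-adjoint solution $E=E(a,b,p)$ of the algebraic Riccati operator equation
\begin{align*}
\mathcal{A}^*E+E\mathcal{A}+E\chi_\omega E=I_{L^2(\Omega)}.
\end{align*}
The change of variables $\psi=Ez+\eta$ decouples the system into
\begin{align*}
z_t+(\mathcal{A}+\chi_\omega E)z=-\chi_\omega\eta,\qquad \eta_t=(\mathcal{A}^*+E\chi_\omega)\eta,
\end{align*}
where the second equation is solved backwards from $\eta(T)=\psi(T)-Ez(T)$. If $-(\mathcal{A}+\chi_\omega E)$ and its adjoint generate exponentially stable semigroups with uniform decay $e^{-\mu t}$, then $\eta$ decays like $e^{-\mu(T-t)}$ from $t=T$, and $z$ is driven forwards from $z(0)$ by a stable closed loop plus a source with the same backward decay, yielding directly the two-exponential bound \eqref{ineq_exponential_turnpike}.

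\textbf{Step 3: uniform exponential stabilization (main obstacle).} The crux is to show that $E$ exists, is bounded on $L^2(\Omega)$ uniformly in $(a,b,p)\in\mathscr{C}$, and that the closed-loop generator $-(\mathcal{A}+\chi_\omega E)$ produces a semigroup with a decay rate $\mu>0$ uniform in the class. The input is the uniform null controllability in Theorem \ref{teo_controlabilidad_uniforme_0}, which, by HUM duality, is equivalent to a final-time observability inequality for the adjoint semigroup with constants independent of the coefficients. This uniform observability provides a uniform stabilizability condition for the pair $(\mathcal{A},\chi_\omega)$; standard linear-quadratic theory then bounds the minimal Riccati solution $E$ by a function of the observability constant, and the Lyapunov identity $\tfrac{d}{dt}\langle Ez,z\rangle\le -\|z\|^2-\|\chi_\omega Ez\|^2$ turns this bound into a uniform exponential decay rate. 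The entire difficulty lies in making the dependence of the LQ constants on the observability constant explicit and uniform over $\mathscr{C}$; this is the heaviest part of the argument and the place where the regularity class $\mathscr{C}$ and the ellipticity bound \eqref{elipticity_of_a} enter to control the coefficients of $\mathcal{A}$ and the well-posedness of the Riccati operator on a common space.

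\textbf{Step 4: conclusion.} Once the uniform bounds on $\|E\|$ and the uniform decay $\mu$ are in hand, substituting into the decoupled system of Step 2 and using the uniform estimates on $z(0)$ and $\psi(T)$ by $C(\|y_0\|_{L^2(\Omega)}+\|y_d\|_{L^2(\Omega)})$ produces \eqref{ineq_exponential_turnpike} for $z=y-\overline{y}$. The control estimate for $f-\overline{f}=-\chi_\omega\psi$ follows from the same bounds via the identity $\psi=Ez+\eta$ and the uniform boundedness of $E$, concluding the proof with $C$ and $\mu$ independent of $T\ge 1$ and of $(a,b,p)\in\mathscr{C}$.
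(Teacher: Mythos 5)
Your strategy is sound and rests on the same two pillars as the paper's proof: the reduction to the difference system via the optimality conditions, and the conversion of uniform null controllability (Theorem \ref{teo_controlabilidad_uniforme_0}, via the two-step control and the Lyapunov identity $\frac{d}{dt}(\hat Ez,z)\le -\|z\|^2-\|\chi_\omega\hat Ez\|^2$, which is exactly the paper's Lemma \ref{corollary_1_exponencial_lemma}) into a uniform bound on the Riccati operator and a uniform exponential decay rate for the closed loop. Where you genuinely diverge is in the decoupling. You impose the algebraic Riccati equation and substitute $\psi=Ez+\eta$, obtaining an \emph{exactly} autonomous pair of stable equations. The paper instead decouples with the \emph{time-dependent} Riccati operator $\mathcal{E}(T-t)$ (Corollary \ref{corollary_2_exponencial}), which is defined purely through the value function of the finite-horizon problem, and then treats $(\mathcal{E}(T-t)-\hat E)\chi_\omega$ as a perturbation of the autonomous closed loop, absorbed by Gronwall once Proposition \ref{corollary_1_exponencial} gives $\|\mathcal{E}(t)-\hat E\|\le C_0e^{-\mu t}$. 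Your route buys a cleaner ODE picture but requires you to give rigorous meaning to the operator identity $\mathcal{A}^*E+E\mathcal{A}+E\chi_\omega E=I$ for the unbounded, nonsymmetric generator $\mathcal{A}$, uniformly over $\mathscr{C}$; the paper deliberately never writes this equation and works only with the variational characterization $(\hat Ey_0,y_0)=\min J^{\infty,0}$.

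There is one concrete gap in your Step 4. With the stationary decoupling, the terminal datum of the backward variable is $\eta(T)=\psi(T)-Ez(T)=-\overline{\phi}-E\bigl(y(T)-\overline{y}\bigr)$, so it is \emph{not} controlled by $\|\psi(T)\|=\|\overline{\phi}\|$ alone: you must also bound $\|y(T)-\overline{y}\|_{L^2(\Omega)}$ uniformly in $T$ and in the coefficients, and this does not follow from the a priori bounds you list (the optimal cost $J^T$ grows linearly in $T$, so crude energy estimates give a $T$-dependent bound on $y(T)$). The fix is the duality identity for the difference system, $\int_0^T\bigl(\|z\|^2+\|\chi_\omega\psi\|^2\bigr)dt=(z(0),\psi(0))_{L^2(\Omega)}-(z(T),\psi(T))_{L^2(\Omega)}$, combined with the smoothing estimates of Lemma \ref{lemma_1}; this is precisely the bootstrap carried out in \eqref{eq_prop_proof_41_1}--\eqref{eq_prop_proof_41_2} in the paper's proof of Proposition \ref{corollary_1_exponencial}. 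Note that the paper's choice of $\mathcal{E}(T-t)$ sidesteps this issue entirely: since $\mathcal{E}(0)=0$, the terminal datum of its decoupled variable is exactly $h(T)=-\overline{\psi}$, which Lemma \ref{lemma_1} bounds by $\|y_d\|_{L^2(\Omega)}$ uniformly. Once you add the $z(T)$ estimate, your argument closes and yields \eqref{ineq_exponential_turnpike} with the claimed uniformity.
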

To the best of our knowledge, this is the first result in this direction in the literature.

In the previous theorem, the constants $C$ and $\mu$ depend, in particular, on the ellipticity constant $a_0$ in \eqref{elipticity_of_a}, and the uniform bound on the coefficients in $\mathscr{C}$, because the uniform controllability constant depends on these conditions and bounds as well.

This result not only ensures the turnpike property for  \eqref{evolutive_control_problem} but also guarantees that for a family of coefficients, $(a,\, b,\, p)$, uniformly bounded in $\mathscr{C}$ and satisfying \eqref{elipticity_of_a} and \eqref{condition_kernel}, there exists a uniform tubular neighborhood (defined by the turnpike constants \eqref{ineq_exponential_turnpike}) of the steady optimal configurations constraining the optimal dynamic trajectories (states and controls). This assertion could not be concluded in the case where the family $(a,\, b,\, p)$ is not uniformly bounded in $\mathscr{C}$, because of the lack of uniform controllability.

The proof of this theorem uses, in an essential manner, the uniform controllability of the system and it is based on the decoupling strategy in \cite{longtime}. This is done through the use of the Riccati operator of the associated infinite-time horizon problem. The uniform null controllability property ensures the uniform exponential stability of the Riccati operator (Proposition \ref{corollary_1_exponencial}). Once this fact is proved, the result follows similarly as in \cite{longtime}. 

Theorem \ref{TH_TURNPIKE_EXP} does not apply to highly oscillatory heterogeneous media in homogenization theory, since the corresponding coefficients are not uniformly bounded in $\mathscr{C}$, and Carleman inequalities do not guarantee the uniform controllability in this context. However, according to Alessandrini and Escauriaza \cite{COCV_2008__14_2_284_0}, in one-space dimension $n=1$, the uniform null control property holds by simply assuming coefficients to be uniformly bounded in $\left[L^\infty(\Omega)\right]^3$ and uniformly elliptic \eqref{elipticity_of_a}. This uniform controllability property was previously proved in \cite{MR1922469} for periodic homogenization in one-space dimension. According to these uniform controllability results, we have the following result.


\begin{theorem}[One-dimensional uniform turnpike property for coefficients bounded in  ${\left[L^\infty(\Omega)\right]^3}$]\label{one_dimensional_UT}
Consider $\Omega=(0,1)$ and assume that the coefficients $(a,b,p)$ are bounded in $\left[L^\infty(\Omega)\right]^3$, and that conditions \eqref{elipticity_of_a} and \eqref{condition_kernel} are fulfilled. Under these milder restrictions on the coefficients, there exist two positive constants $C$ and $\mu$, independent of $T\geq 1$ and the coefficients $(a,\,b,\,p)$ in this class, such that \eqref{ineq_exponential_turnpike} holds. Furthermore, in the particular case of periodic rapidly oscillatory coefficients, both the uniform turnpike property and the homogenization of the turnpike hold.
\end{theorem}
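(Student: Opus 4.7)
The plan is to follow the same three-pillar strategy as in the proof of Theorem \ref{TH_TURNPIKE_EXP} (uniform null controllability $\Rightarrow$ uniform exponential stability of the algebraic Riccati operator $\Rightarrow$ turnpike via Porretta--Zuazua decoupling), but feeding into it the one-dimensional uniform null control result of Alessandrini--Escauriaza \cite{COCV_2008__14_2_284_0} in place of the Carleman-based bound, which required $a\in W^{1,\infty}(\Omega)$. The first thing I would verify is that each intermediate estimate used in Theorem \ref{TH_TURNPIKE_EXP} is robust under the weaker regularity $(a,b,p)\in [L^\infty(\Omega)]^3$. Uniform ellipticity \eqref{elipticity_of_a} together with Lax--Milgram and Fredholm's alternative under \eqref{condition_kernel} still provides solvability of \eqref{stationry_system}, and Lemma \ref{lemma_1} (uniform $H^1$ bound on $(\overline{y},\overline{f})$) depends only on the ellipticity constant $a_0$, the $L^\infty$ bound on the coefficients and the coercivity of $J^s$, so it transfers verbatim to the present framework. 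Consequently, Proposition \ref{corollary_1_exponencial} applies: the uniform null controllability of Alessandrini--Escauriaza yields the uniform exponential decay of $e^{-t\mathcal{A}_E}$ (the closed-loop semigroup of the infinite-horizon problem), with decay rate and constant depending only on $a_0$ and the $L^\infty$ bounds. Plugging this into the Porretta--Zuazua decoupling of the optimality system exactly as in the proof of Theorem \ref{TH_TURNPIKE_EXP} produces \eqref{ineq_exponential_turnpike} in the one-dimensional $L^\infty$ class. This gives the first half of the theorem.

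For the second half, consider a sequence $a_\varepsilon(x)=a(x/\varepsilon)$, $b_\varepsilon(x)=b(x/\varepsilon)$, $p_\varepsilon(x)=p(x/\varepsilon)$ with $(a,b,p)$ periodic, bounded and with $a\geq a_0>0$. These coefficients are uniformly in $[L^\infty(\Omega)]^3$ and uniformly elliptic, so the first part of the theorem yields \eqref{ineq_exponential_turnpike} with constants $C,\mu$ independent of $\varepsilon$ and of $T\geq 1$. Denote the corresponding optimal pairs by $(y^\varepsilon,f^\varepsilon)$ and $(\overline{y}^\varepsilon,\overline{f}^\varepsilon)$. The homogenization of the turnpike then amounts to three convergences that I would establish in order: (i) $(\overline{y}^\varepsilon,\overline{f}^\varepsilon)\to (\overline{y}^*,\overline{f}^*)$, the optimal pair of the stationary problem associated to the homogenized operator $\mathcal{A}^*_{\mathrm{hom}}$ with constant coefficients computed by the standard 1D cell formula; (ii) $(y^\varepsilon,f^\varepsilon)\to (y^*,f^*)$, the optimal pair of the evolution problem with homogenized coefficients, on every interval $[0,T]$; (iii) passage to the limit $\varepsilon\to 0$ in the uniform inequality \eqref{ineq_exponential_turnpike} using (i)--(ii), which then yields the same inequality for the homogenized problem and proves that the turnpike commutes with homogenization.

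For (i) and (ii) I would combine $H$-convergence (in 1D, simply the weak-$\star$ convergence of $a_\varepsilon^{-1}$ to the harmonic mean) with a standard $\Gamma$-convergence argument for the quadratic functionals $J^s$ and $J^T$: uniform $H^1$ bounds on the states, weak compactness of $f^\varepsilon$ in $L^2$, lower semicontinuity of the cost, and upper bound by evaluating on recovery sequences built from the cell problem correctors. Uniqueness of the homogenized optimal pair then upgrades weak to strong convergence. Because $\varepsilon$-uniform null controllability is available, Riccati operators $E_\varepsilon$ are uniformly bounded and uniformly exponentially stable; passing to the limit in the algebraic Riccati equation (tested against a fixed smooth $\varphi$) identifies the limit with the Riccati operator of the homogenized problem and lets one also pass to the limit in the feedback law, which is what makes (ii) quantitative on the whole time interval.

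The main obstacle I anticipate is precisely the passage to the limit in the Riccati operator (or equivalently in the adjoint state of the optimality system) over a time horizon that is $\varepsilon$-independent but possibly large. Weak convergence of fluxes $a_\varepsilon\partial_x y^\varepsilon$ in the sense of $H$-convergence interacts delicately with the quadratic feedback term $E_\varepsilon y^\varepsilon$; one must avoid losing compactness when taking products of two weakly converging sequences. The remedy I would use is the div-curl lemma specialized to one dimension (where $H$-convergence is particularly transparent via the harmonic mean), combined with the uniform exponential decay of $E_\varepsilon$ provided by Proposition \ref{corollary_1_exponencial}, which controls the long-time tails uniformly in $\varepsilon$ and reduces the limit passage to a finite time interval on which parabolic regularity gives compactness of $y^\varepsilon$ in $L^2(0,T;L^2(\Omega))$. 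Once this is done, the homogenization of the turnpike follows by combining (i)--(iii) with the $\varepsilon$-uniform estimate \eqref{ineq_exponential_turnpike}.
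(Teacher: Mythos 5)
Your first half is exactly the paper's argument: the authors prove Theorem \ref{one_dimensional_UT} by observing (Remark \ref{explicit_constant_turnpike}, item (2)) that the $W^{1,\infty}$ bound on $a$ enters the proof of Theorem \ref{TH_TURNPIKE_EXP} only through the Carleman-based uniform null controllability, and that every other estimate (Lemma \ref{lemma_1}, Lemma \ref{corollary_1_exponencial_lemma}, Proposition \ref{corollary_1_exponencial}) depends solely on $a_0$, the $L^\infty$ bounds, and the observability constant; substituting the Alessandrini--Escauriaza result (Theorem \ref{theorema_1d_null_control}) then gives \eqref{ineq_exponential_turnpike} in the $[L^\infty]^3$ class. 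For the homogenization half, your items (i)--(iii) and the $\Gamma$-convergence-type argument (uniform bounds, weak compactness of the controls, lower semicontinuity, comparison with fixed test controls, convergence of optimal values upgrading weak to strong convergence) are precisely what the paper does in Proposition \ref{homogenization_theorem_sec2}, using the classical convergence Lemmas \ref{Theorem_homogenization_systems_1}--\ref{Theorem_homogenization_systems_2}. Where you diverge is in proposing to pass to the limit in the algebraic Riccati equation and in the feedback law, invoking the div-curl lemma to handle products of weakly convergent sequences; the paper never does this. It only passes to the limit in the final inequality \eqref{ineq_exponential_turnpike_varepsilon}, which requires nothing beyond the strong $L^2$ (resp.\ $C([0,T];L^2)$) convergence of the optimal pairs already secured by the variational argument. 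The ``main obstacle'' you anticipate is therefore an artifact of your chosen detour rather than of the problem, and can be bypassed entirely. Two minor cautions: the paper's homogenization corollaries are stated only for oscillating $a_\varepsilon$ with $b\equiv p\equiv 0$, so if you also oscillate $b_\varepsilon$ and $p_\varepsilon$ you must additionally identify the homogenized lower-order terms, which is not covered by the cited lemmas; and to obtain the pointwise-in-$t$ inequality for the controls in the limit you should use that $f^\varepsilon=-\chi_\omega\psi^\varepsilon$ with the adjoint states converging in $C([0,T];L^2(\Omega))$, not merely the $L^2(0,T;\Omega)$ convergence of the controls.
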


\subsection{Outline} The rest of this work is organized in the following way. In Section \ref{sec:preliminaries}, we present preliminary results related to well-posedness and uniform null controllability. Section \ref{main_results} is devoted to state some consequences of the uniform turnpike property, including the uniform integral turnpike property and the homogenization of the turnpike property in $1-d$, in the context of highly oscillatory heterogeneous media. The proof of the uniform turnpike property is given in Section \ref{Section_main_theo}. In Section \ref{numerical}, we present numerical experiments that confirm our theoretical results. Finally, Section \ref{further_comentaries} concludes the paper with a discussion on possible extensions and open problems.

\section{Preliminaries}\label{sec:preliminaries}
Let us consider the coefficients $(a,b,p)\in L^{\infty}(\Omega)\times \left(L^\infty(\Omega)\right)^n\times L^\infty(\Omega)$ satisfying \eqref{elipticity_of_a}. Then, for every $y_0\in  L^2(\Omega)$, the parabolic equation \eqref{rapidly_ocilation_general_heat} admits a unique solution $ y$ in the class 
\begin{align*}
    W(0,T)=\left\{  y \in L^2(0,T;H^1_0(\Omega)),\,   y_t\in L^2(0,T;H^{-1}(\Omega))\right\}.
\end{align*}
See, for instance, \cite[Chapter 7]{MR1625845}.
Additionally, if the coefficients $(a,\,b,\,p) \in L^\infty(\Omega)\times\left(L^\infty(\Omega)\right)^n\times L^\infty(\Omega)$ satisfy \eqref{elipticity_of_a} and \eqref{condition_kernel}, then  the elliptic equation \eqref{stationary_var_problem} admits a unique solution $\overline{y}$ in $H_0^1(\Omega)$. See \cite[Chapter 6]{MR1625845}.

The following lemma shows that problems \eqref{evolutive_control_problem} and \eqref{stationary_var_problem} admit a unique minimizer.

\begin{lemma}[Existence and uniqueness of the optimal pairs]\label{wellposedness}
Let $(a,b,p)\in L^{\infty}(\Omega)\times \left(L^\infty(\Omega)\right)^n\times L^\infty(\Omega)$. Under the assumption \eqref{elipticity_of_a} the optimization problem \eqref{evolutive_control_problem} has a unique solution $( y, f )\in W(0,T)\times L^2(0,T;\Omega)$, where $ y$ is the optimal state associated to
the control $ f$. Similarly, assuming \eqref{condition_kernel} is fulfilled, the stationary optimization problem \eqref{stationary_var_problem} has a unique solution $(\overline{y}, \overline{f} )\in H_0^1(\Omega)\times L^2(\Omega)$, where $\overline{y}$ is the optimal state associated to
the control $ \overline{f}$.
\end{lemma}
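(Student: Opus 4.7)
The plan is to apply the direct method of the calculus of variations to each of the two problems independently. In both cases I would show that the cost functional, seen as a function of the control alone, is continuous, strictly convex, and coercive on the relevant Hilbert space; existence then follows from weak compactness combined with weak lower semicontinuity of convex continuous functionals, and uniqueness from strict convexity.

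For the evolution problem \eqref{evolutive_control_problem}, the affine continuous character of the control-to-state map $f\mapsto y$ from $L^2(0,T;\Omega)$ into $W(0,T)$, which is the well-posedness statement recalled at the beginning of this section, turns $J^T$ into a continuous quadratic functional of $f$. Coercivity is immediate from $J^T(f)\geq \tfrac{1}{2}\|f\|_{L^2(0,T;\Omega)}^2$, and strict convexity follows from the strictly convex control term together with the nonnegative convex tracking term. Taking a minimizing sequence, coercivity yields a weak limit in $L^2(0,T;\Omega)$, weak lower semicontinuity of the convex continuous $J^T$ makes it a minimizer, and strict convexity gives uniqueness; the associated optimal state lies in $W(0,T)$ by the same well-posedness estimate.

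For the stationary problem \eqref{stationary_var_problem}, the additional step is to verify the well-posedness of \eqref{stationry_system}. Under \eqref{elipticity_of_a} the operator $\mathcal{A}$ of \eqref{operator_A} is Fredholm of index zero from $H_0^1(\Omega)$ into $H^{-1}(\Omega)$; using the identification $(\operatorname{Im}\mathcal{A})^\perp = \operatorname{Ker}\mathcal{A}^*$, hypothesis \eqref{condition_kernel} rules out the cokernel, so that $\mathcal{A}$ is in fact an isomorphism. The control-to-state map $\overline{f}\mapsto \overline{y}$ is therefore linear and continuous from $L^2(\Omega)$ into $H_0^1(\Omega)$, and $J^s$ inherits continuity, strict convexity, and coercivity on $L^2(\Omega)$. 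The same direct-method argument then yields a unique minimizer $(\overline{y},\overline{f})\in H_0^1(\Omega)\times L^2(\Omega)$.

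The only nontrivial point in the entire proof is the invertibility of $\mathcal{A}$ in the stationary case, which is precisely what condition \eqref{condition_kernel} is designed to guarantee; everything else is a routine application of the direct method of the calculus of variations.
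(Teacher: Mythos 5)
Your proposal is correct and follows exactly the route the paper indicates: the authors omit the proof, stating only that it is a standard application of the direct method in the calculus of variations, which is precisely what you carry out (including the Fredholm/kernel argument for the invertibility of $\mathcal{A}$ under \eqref{condition_kernel}, which the paper likewise delegates to the well-posedness discussion preceding the lemma).
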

The proof of Lemma \ref{wellposedness} is standard and is based on the direct method in the calculus of variations. We omit it for brevity.

\begin{remark}
Assumption \eqref{condition_kernel} is satisfied, in particular, if $(a,\, b,\, p) \in L^{\infty}(\Omega) \times \left(L^\infty(\Omega)\right)^n \times L^\infty(\Omega)$, satisfy \eqref{elipticity_of_a} and  $p(x) - \diver q(x)/2\geq 0$ a.e. in $\Omega$. See \cite[Chapter 9, Remark 23]{MR2759829} or \cite[Corollary 8.2]{MR1814364}.
\end{remark}

\subsection{Uniform null controllability}\label{uniform_null_section}
 In this section, we analyze the uniform null controllability properties of the parabolic equation \eqref{rapidly_ocilation_general_heat}. The following holds:

\begin{theorem}[Uniform null controllability in the multi-dimensional setting]\label{teo_controlabilidad_uniforme_0}
Let us assume that the coefficients $(a,\,b,\,p)$ are bounded in $\mathscr{C}$ and satisfy the uniform ellipticity condition \eqref{elipticity_of_a}. Then, the system \eqref{rapidly_ocilation_general_heat} satisfies the uniform null controllability property, in the sense that for each  $T>0$, there exists a constant $C_T>0$ independent of the coefficients $(a,b,p)\in \mathscr{C}$ such that for each $y_0\in L^2(\Omega)$ there exists a null control $f$ satisfying
\begin{align}\label{uniform_bound_control}
    \| f\|_{L^2(0,T;\Omega)}\leq C_{T}\|y_0\|_{L^2(\Omega)}, \quad \forall y_0\in L^2(\Omega).
\end{align}
\end{theorem}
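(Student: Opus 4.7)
The plan is to prove the null controllability through the classical duality (HUM) reduction to an observability inequality for the adjoint system, and then extract the observability with a constant depending only on $a_0$, $\Omega$, $\omega$, $T$ and the bounds of $(a,b,p)$ in $\mathscr{C}$, rather than on the functions themselves. Concretely, for $\varphi_T\in L^2(\Omega)$, let $\varphi$ solve the adjoint backward equation
\begin{equation*}
\begin{cases}
-\varphi_t -\diver(a\nabla\varphi) -\diver(b\,\varphi)+p\,\varphi = 0 & \text{in }\Omega\times(0,T),\\
\varphi = 0 & \text{on }\partial\Omega\times(0,T),\\
\varphi(\cdot,T)=\varphi_T & \text{in }\Omega.
\end{cases}
\end{equation*}
Null controllability of \eqref{rapidly_ocilation_general_heat} with a control bounded as in \eqref{uniform_bound_control} is equivalent to the observability estimate
\begin{equation*}
\|\varphi(\cdot,0)\|_{L^2(\Omega)}^2 \le C_T^2 \int_0^T\!\!\int_\omega |\varphi|^2\,dx\,dt,\qquad \forall \varphi_T\in L^2(\Omega),
\end{equation*}
so the whole game is to show that $C_T$ can be chosen uniformly over the class of admissible coefficients.

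To obtain the observability, I would apply the global Carleman inequality of Fursikov--Imanuvilov \cite{MR1406566}. Since $a\in W^{1,\infty}(\Omega)$ one can expand $\diver(a\nabla\varphi)=a\Delta\varphi+\nabla a\cdot\nabla\varphi$ and rewrite the adjoint equation as a heat equation with zero-order and first-order lower order terms whose coefficients are bounded by $\|\nabla a\|_{L^\infty}$, $\|b\|_{L^\infty}$, $\|\diver b\|_{H^{-1}}$ and $\|p\|_{L^\infty}$. After choosing a standard Fursikov--Imanuvilov weight $\alpha(x,t)=(e^{\lambda\psi(x)}-e^{2\lambda\|\psi\|_\infty})/(t(T-t))$ associated to $\omega$, one gets, for every $\lambda$ and $s$ large enough,
\begin{equation*}
\int_0^T\!\!\int_\Omega e^{-2s\alpha}\bigl(s^{-1}|\varphi_t|^2+s^{-1}|\Delta\varphi|^2+s\lambda^2|\nabla\varphi|^2+s^3\lambda^4|\varphi|^2\bigr)\,dx\,dt \le C\int_0^T\!\!\int_\omega e^{-2s\alpha} s^3\lambda^4 |\varphi|^2\,dx\,dt.
\end{equation*}
The point, which is the core of the argument, is that the constant $C$ and the thresholds $\lambda_0,s_0$ depend only on $\Omega$, $\omega$, $a_0$ and the bounds $\|a\|_{W^{1,\infty}}$, $\|b\|_{(L^\infty)^n}$, $\|p\|_{L^\infty}$, since they enter the proof only through the absorption of the lower order terms by the dominant $s^3\lambda^4|\varphi|^2$ and $s\lambda^2|\nabla\varphi|^2$ contributions. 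A careful bookkeeping of this dependency (choosing $s\geq s_0\bigl(\|(a,b,p)\|_\mathscr{C},a_0,T\bigr)$) yields a Carleman estimate with a constant uniform in the class $\mathscr{C}$.

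From the Carleman estimate, the uniform observability follows by the usual two-step procedure: a classical energy estimate for the adjoint system on $[0,T/2]$ gives $\|\varphi(0)\|_{L^2}^2\le C\|\varphi(T/2)\|_{L^2}^2$, with $C$ depending only on the uniform bounds in $\mathscr{C}$, and the Carleman inequality then controls $\|\varphi(T/2)\|_{L^2}^2$ by the observation on $\omega\times(0,T)$. Combining both, we get the observability with a constant $C_T$ depending only on $T$, $\Omega$, $\omega$, $a_0$ and the bounds of $(a,b,p)$ in $\mathscr{C}$. By HUM, this produces a control $f$ satisfying \eqref{uniform_bound_control} uniformly in the coefficients. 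The main obstacle is precisely the quantitative tracking of the coefficient-dependence in the Carleman weighted absorption argument; I expect everything else (HUM duality, energy estimate, passage from observability to control cost) to be entirely standard once that is in hand.
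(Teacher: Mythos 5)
Your proposal is correct and follows essentially the same route as the paper: the paper's proof also invokes the global Carleman inequalities of Fursikov--Imanuvilov and observes that the resulting controllability cost depends only on the norms of $(a,b,p)$ in $\mathscr{C}$, the ellipticity constant $a_0$, and $T$, hence is uniform over the bounded class. You simply make explicit the duality/observability reduction and the weighted absorption bookkeeping that the paper leaves to the cited reference.
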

The constant $C_{T}$ in Theorem \ref{teo_controlabilidad_uniforme_0} is the so-called \emph{controllability cost} and is uniformly bounded in this class of coefficients.
\begin{proof}
Due to the global Carleman inequalities introduced in \cite{MR1406566}, there exists a control $f\in L^2(0,T;\Omega)$ such that $y$, the solution of \eqref{rapidly_ocilation_general_heat} with initial condition $y_0\in L^2(\Omega)$ and coefficients $(a,\,b,\,p)\in\mathscr{C}$, satisfies 
\begin{align*}
y(\cdot,T)=0,\quad  \text{ in }\Omega,
\end{align*}
and there exists is a positive constant $C_{a,b,p,T}$ such that
\begin{align*}
\| f\|_{L^2(0,T;\Omega)}\leq C_{a,b,p,T}\|y_0\|_{L^2(\Omega)}, \quad\forall y_0\in L^2(\Omega).
\end{align*}
Also, from \cite{MR1406566}, we can observe that the controllability cost $C_{a,b,p, T,a_0}$ depends continuously on the norm of the coefficients $(a,\,b,\,p)$ in $\mathscr{C}$, the time horizon $T$, and the ellipticity constant $a_0$. Thus, there exists a positive constant $C_{T,a_0}$ such that $ C_{a,b,p,T}\leq C_{T,a_0}$ for all coefficients in this class, concluding \eqref{uniform_bound_control}.
\end{proof}

Theorem \ref{teo_controlabilidad_uniforme_0} still holds when we assume $a\in L^{\infty}(\Omega)$, but only in one-space dimension $n=1$. More precisely, using complex analysis tools, such as K-quasiconformal homeomorphisms, Alessandrini and Escauriaza proved the following result in \cite{COCV_2008__14_2_284_0}.

\begin{theorem}[Uniform null controllability in $1-d$]\label{theorema_1d_null_control}
Let $\Omega=(0,1)$ and assume that the coefficients $(a,\,b,\,p)$ are bounded in $L^{\infty}(\Omega)\times \left(L^\infty(\Omega)\right)^n\times L^\infty(\Omega)$ and satisfy the uniform ellipticity condition \eqref{elipticity_of_a}.
Then, the system \eqref{rapidly_ocilation_general_heat} satisfies the uniform null controllability property.
\end{theorem}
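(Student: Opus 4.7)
The plan is to reduce the uniform null-controllability claim to a uniform observability inequality for the adjoint parabolic equation, and then establish that observability through one-dimensional tools from the theory of $K$-quasiregular mappings, following Alessandrini and Escauriaza. The reason a new argument is needed, rather than an appeal to Theorem \ref{teo_controlabilidad_uniforme_0}, is that the standard Fursikov--Imanuvilov Carleman estimates only produce observability constants that remain uniformly bounded when $a\in W^{1,\infty}$, whereas here $a$ is merely $L^\infty$.

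First I would invoke the usual HUM duality to recast the problem as proving the existence of $C_T>0$, independent of the admissible coefficients, such that every solution $\varphi$ of the backward adjoint equation with terminal datum $\varphi_T\in L^2(0,1)$ satisfies
$$\|\varphi(\cdot,0)\|_{L^2(0,1)}^2 \leq C_T^2\int_0^T\int_\omega|\varphi(x,t)|^2\,dx\,dt.$$
A multiplicative change of unknown $\varphi=e^{\theta(x)}\tilde\varphi$, with $\theta$ built from $b/a$, eliminates the drift term and reduces the problem to uniform observability for a pure divergence-form equation $\tilde\varphi_t+\partial_x(a(x)\partial_x\tilde\varphi)-\tilde p(x)\tilde\varphi=0$, whose coefficients inherit $L^\infty$ and ellipticity bounds controlled solely by $a_0$ and the $L^\infty$ bounds on the original $(a,b,p)$.

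Next I would borrow the key observation from \cite{COCV_2008__14_2_284_0}: in one space dimension, given a solution $\varphi$ of the divergence-form heat equation one can introduce a conjugate function $\psi$ via $\partial_x\psi=a\,\partial_x\varphi$ and $\partial_t\psi$ equal to a suitable flux, so that the pair $(\varphi,\psi)$ satisfies a first-order Beltrami-type elliptic system in the $(x,t)$-plane. Its Beltrami coefficient is a pointwise function of $a$ controlled uniformly by $a_0$ and $\|a\|_{L^\infty}$; hence the map $(x,t)\mapsto(\varphi,\psi)$ is essentially $K$-quasiregular with $K=K(a_0,\|a\|_{L^\infty})$. The general theory of $K$-quasiregular mappings then furnishes uniform doubling and three-cylinder quantitative unique continuation inequalities whose constants depend only on $K$, and these transfer into a uniform spectral inequality of Lebeau--Robbiano type for the elliptic generator.

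Finally, I would conclude the uniform observability by the classical Lebeau--Robbiano iteration, combining this spectral inequality with the dissipative decay of the semigroup. The hardest step is the quasiregular reduction: it requires making precise sense of the Beltrami system when $a$ is only $L^\infty$, and extracting quantitative unique-continuation estimates whose constants do not deteriorate within the admissible class. The crucial stability fact is that the $K$-quasiregular property is preserved under weak-$*$ convergence of the Beltrami coefficient, which is precisely what yields a constant $C_T$ depending only on $T$, $a_0$, and the $L^\infty$ bounds on the coefficients, thus completing the proof.
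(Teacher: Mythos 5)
The paper does not actually prove this theorem: it is quoted from Alessandrini and Escauriaza \cite{COCV_2008__14_2_284_0} (see also \cite{MR1922469} for the periodic case), so there is no in-paper argument to compare against, and your sketch is in effect a reconstruction of the cited proof. Your overall architecture --- duality to a uniform observability inequality, quantitative unique continuation with constants depending only on the ellipticity via quasiregular mappings, a uniform spectral inequality, and the Lebeau--Robbiano iteration --- is indeed the strategy of that reference. One step, however, is misstated: a solution of the heat equation paired with its conjugate through $\partial_x\psi=a\,\partial_x\varphi$ does \emph{not} form a $K$-quasiregular map in the $(x,t)$-plane, because the associated first-order system is parabolic rather than elliptic; quasiregularity encodes uniform ellipticity of a $2\times 2$ divergence-form operator in the plane, which $\partial_t-\partial_x(a\,\partial_x\cdot)$ is not. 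In \cite{COCV_2008__14_2_284_0} the stream-function/Beltrami machinery is applied instead to the genuinely elliptic equation $\partial_s^2F+\partial_x\bigl(a\,\partial_xF\bigr)=0$ in the $(x,s)$ half-plane, satisfied by the $\sinh(\sqrt{\lambda_j}\,s)$-extension of finite linear combinations of eigenfunctions of the generator; the three-circle and doubling estimates for $K$-quasiregular maps with $K=K(a_0,\|a\|_{L^\infty})$ then yield the spectral inequality with uniform constants, and only afterwards does the Lebeau--Robbiano iteration produce the observability inequality for \eqref{rapidly_ocilation_general_heat}. Two smaller points: the uniformity of the constants comes from their quantitative dependence on $K$ alone, not from any weak-$*$ stability of quasiregularity (which by itself would only give a non-quantitative compactness argument); and the removal of the drift $b\in L^\infty$ by a single exponential gauge is more delicate than you suggest, since it trades a divergence-form drift for a non-divergence-form one, and the general lower-order terms are in fact absorbed directly into the quasiregular representation in the cited work. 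With the Beltrami reduction relocated to the elliptic extension, your outline matches the proof the paper relies on.
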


This result was previously proved in \cite{MR1922469}, in the context of periodic homogenization in $1-d$ with boundary control.

\section{Some consequences of the uniform turnpike property}\label{main_results}
In this section, we present some results that arise as consequences of the uniform turnpike property.

\subsection{Uniform integral turnpike property}

We begin with the so-called integral turnpike property. As a direct consequence of the uniform turnpike property, we can ensure that the integral turnpike property holds uniformly.
\begin{corollary}[Uniform integral turnpike property]\label{Theo_integral_turnpike}
 Let us assume that the coefficients $(a,\,b,\,p)$ are bounded in $\mathscr{C}$. Additionally, assume that \eqref{elipticity_of_a} and \eqref{condition_kernel} are fulfilled. Consider the optimal pairs $( y, f)$ and $(\overline{y}, \overline{f})$ of problems \eqref{evolutive_control_problem} and \eqref{stationary_var_problem}, respectively. Then for $T>1$, we have
\begin{align}
    \nonumber\left\|\frac{1}{T}\int_0^T  y(\cdot,t)dt - \overline{y}(\cdot)\right\|_{L^2(\Omega)}+ &\left\| \frac{1}{T}\int_0^T  f(\cdot,t)dt -  \overline{f}(\cdot) \right\|_{L^2(\Omega)}\\
    &\leq \frac{2C(\|y_0\|_{L^2(\Omega)}+\|y_d\|_{L^2(\Omega)})(1-e^{-\mu T})}{\mu T},
\end{align}
for every $y_0\in L^2(\Omega)$ and  $(a,\,b,\,p)$ in this class. Here $C$ and $\mu$ are the same constants as in Theorem \ref{TH_TURNPIKE_EXP}.
\end{corollary}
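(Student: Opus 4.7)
The plan is to derive this corollary directly from the exponential turnpike estimate \eqref{ineq_exponential_turnpike} of Theorem \ref{TH_TURNPIKE_EXP}, with no new ingredients beyond Minkowski's integral inequality and an elementary computation.

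First I would rewrite the time averages as integrals of the deviations from the turnpike, using the fact that $\overline{y}$ and $\overline{f}$ are time-independent:
\begin{align*}
\frac{1}{T}\int_0^T y(\cdot,t)\,dt - \overline{y}(\cdot) = \frac{1}{T}\int_0^T\bigl( y(\cdot,t)-\overline{y}(\cdot)\bigr)\,dt,
\end{align*}
and similarly for the control average. Then I would apply Minkowski's integral inequality (i.e., pull the $L^2(\Omega)$ norm inside the time integral) to obtain
\begin{align*}
\left\|\frac{1}{T}\int_0^T y(\cdot,t)\,dt - \overline{y}(\cdot)\right\|_{L^2(\Omega)} + \left\|\frac{1}{T}\int_0^T f(\cdot,t)\,dt - \overline{f}(\cdot)\right\|_{L^2(\Omega)} \\
\leq \frac{1}{T}\int_0^T\Bigl(\| y(\cdot,t)-\overline{y}(\cdot)\|_{L^2(\Omega)} + \| f(\cdot,t)-\overline{f}(\cdot)\|_{L^2(\Omega)}\Bigr)\,dt.
\end{align*}

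Next I would plug in the uniform pointwise-in-time estimate \eqref{ineq_exponential_turnpike}, which controls the integrand by $C(\|y_0\|_{L^2(\Omega)}+\|y_d\|_{L^2(\Omega)})\bigl(e^{-\mu t}+e^{-\mu(T-t)}\bigr)$ with $C,\mu$ independent of $T\geq 1$ and of the coefficients in $\mathscr{C}$. A change of variables $s=T-t$ in the second exponential gives
\begin{align*}
\int_0^T\bigl(e^{-\mu t}+e^{-\mu(T-t)}\bigr)\,dt = 2\int_0^T e^{-\mu t}\,dt = \frac{2\bigl(1-e^{-\mu T}\bigr)}{\mu}.
\end{align*}
Combining this identity with the previous inequality yields precisely the stated bound, with the same constants $C$ and $\mu$ inherited from Theorem \ref{TH_TURNPIKE_EXP}.

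No real obstacle arises: the uniformity in $T$ and in the coefficient class is automatic from Theorem \ref{TH_TURNPIKE_EXP}, and the exponential decay of the integrand is what gives the $1/T$ factor after averaging. The only point worth noting is that $\overline{y}$ and $\overline{f}$ are time-independent, so they commute trivially with the time average, making the linearization in the first step justified without further comment.
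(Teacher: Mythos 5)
Your proposal is correct and is exactly the argument the paper has in mind: the authors state that the corollary "can be derived immediately by integrating \eqref{ineq_exponential_turnpike} over time" and omit the details, which are precisely your Minkowski step plus the computation $\int_0^T\bigl(e^{-\mu t}+e^{-\mu(T-t)}\bigr)\,dt = 2(1-e^{-\mu T})/\mu$.
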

This result can be derived immediately by integrating \eqref{ineq_exponential_turnpike} over time; hence, we will omit the proof.

\subsection{Application to $1-d$ homogenization}
Let $a\in L^\infty(\R)$ be a periodic function with period $1$ satisfying
\begin{align}\label{bound_a_homo}
    0<a_0\leq a(x)\leq a_1 \text{ a.e. in }\R,
\end{align}
and consider 
\begin{align}\label{homogenizated_a}
    a_\varepsilon(x)=a\left(\frac{x}{\varepsilon}\right),\quad \text{for }\varepsilon>0.
\end{align}
We are interested in analyzing the uniform turnpike property for the optimal control problem
\begin{align}\label{evolutive_control_problem_2}
    \min_{f^\varepsilon\in L^{2}(0,T;\Omega)}\biggr\{ J^T(f^\varepsilon)=\frac{1}{2}\int_0^T \|f^\varepsilon(\cdot,t)\|_{L^2(\Omega)}^2 +\|y^\varepsilon(\cdot,t)-y_d(\cdot)\|^2_{L^2(\Omega)}dt\biggr\},
\end{align}
where $ y^\varepsilon$ solves the rapidly oscillating heat equation
\begin{align}\label{rapidly_heat}
    \begin{cases}
        y_t^\varepsilon -  \left(  a_\varepsilon(x)y^\varepsilon_x\right)_x=\chi_\omega f^\varepsilon\quad & (x,t)\in(0,1)\times(0,T),\\
    y^\varepsilon(0,t)=y^\varepsilon(1,t)=0 & t\in(0,T),\\
    y^\varepsilon(x,0)=y_0(x)& x\in(0,1),
    \end{cases}
\end{align}
for $\varepsilon>0$. We also consider the stationary problem
\begin{align}\label{stationary_var_problem_2}
    \min_{ \overline{f}^\varepsilon \in  L^2(\Omega)}\biggr\{J^s( \overline{f}^\varepsilon)= \frac{1}{2}\biggr(\| \overline{f}^\varepsilon(\cdot)\|_{L^2(\Omega)}^2+\|\overline{y}^\varepsilon(\cdot)-y_d(\cdot)\|^{2}_{L^2(\Omega)}\biggr)\biggr\},
\end{align}
where $\overline{y}^\varepsilon$ solves the rapidly oscillating elliptic equation
\begin{align}\label{stationry_system_2}
   \begin{cases}
        - \left(  a_\varepsilon(x) \overline{y}^\varepsilon_x\right)_x=\chi_\omega  \overline{f}^\varepsilon\quad & x\in(0,1),\\
        \overline{y}^\varepsilon(0)=\overline{y}^\varepsilon(1)=0.
   \end{cases}
\end{align}
In this setting, thanks to Theorems \ref{theorema_1d_null_control} and \ref{one_dimensional_UT}, we can immediately deduce the uniform turnpike property.

\begin{corollary}[Uniform turnpike property and homogenization in $1-d$]\label{corollary_uniform_epsilon}
    Let $a_\varepsilon\in L^{\infty}(\R)$ be as in \eqref{bound_a_homo}-\eqref{homogenizated_a}.  Consider the optimal pairs $( y^\varepsilon, f^\varepsilon)$ and $(\overline{y}^\varepsilon, \overline{f}^\varepsilon)$ of problems \eqref{evolutive_control_problem_2} and \eqref{stationary_var_problem_2}, respectively. Then for $T>1$, there exist two positive constants $C$ and $\mu$, independent of $T$ and $\varepsilon$, such that
\begin{align}\label{ineq_exponential_turnpike_varepsilon}
    \| y^\varepsilon(\cdot,t)-\overline{y}^\varepsilon(\cdot)\|_{L^2(0,1)}+\| f^\varepsilon(\cdot,t)- \overline{f}^\varepsilon(\cdot)\|_{L^2(0,1)}\leq C\left(\|y_0\|_{L^2(0,1)}+\|y_d\|_{L^2(0,1)}\right)\biggr(e^{-\mu t}+e^{-\mu(T-t)}\biggr),
\end{align}
for every $t\in (0,T)$, $\varepsilon>0$, and $y_0\in L^2(\Omega)$.
\end{corollary}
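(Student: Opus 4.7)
The plan is to deduce the corollary directly from Theorem \ref{one_dimensional_UT} by verifying that the family $\{(a_\varepsilon, 0, 0)\}_{\varepsilon > 0}$ belongs, uniformly in $\varepsilon$, to the class of coefficients for which Theorem \ref{one_dimensional_UT} applies. All the hard work has been done upstream: the one-dimensional uniform null controllability of Alessandrini--Escauriaza (Theorem \ref{theorema_1d_null_control}) feeds into Theorem \ref{one_dimensional_UT}, and what remains is just a checklist.

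First, I would verify the uniform $L^\infty$ bound and uniform ellipticity. Since $a$ is periodic and satisfies \eqref{bound_a_homo}, the rescaling $a_\varepsilon(x) = a(x/\varepsilon)$ is a measure-preserving change of variables, so $a_0 \leq a_\varepsilon(x) \leq a_1$ a.e.\ in $(0,1)$ for every $\varepsilon > 0$. Taking $b_\varepsilon \equiv 0$ and $p_\varepsilon \equiv 0$, the family $(a_\varepsilon, 0, 0)$ is uniformly bounded in $\left[L^\infty(0,1)\right]^3$ and satisfies \eqref{elipticity_of_a} with the same constant $a_0$ independent of $\varepsilon$.

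Second, I would check the kernel condition \eqref{condition_kernel}. Since $b_\varepsilon \equiv 0$ and $p_\varepsilon \equiv 0$, the operator $\mathcal{A}_\varepsilon = -(a_\varepsilon \, \partial_x\,\cdot\,)_x$ with homogeneous Dirichlet boundary conditions is self-adjoint on $L^2(0,1)$ and its associated bilinear form is coercive on $H_0^1(0,1)$ by the Poincar\'e inequality together with $a_\varepsilon \geq a_0 > 0$. Hence $\operatorname{Ker}(\mathcal{A}_\varepsilon^*) = \operatorname{Ker}(\mathcal{A}_\varepsilon) = \{0\}$, and the static problem \eqref{stationary_var_problem_2} admits a unique minimizer $(\overline{y}^\varepsilon, \overline{f}^\varepsilon)$ (Lemma \ref{wellposedness}).

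Third, I would apply Theorem \ref{one_dimensional_UT} to each $\varepsilon > 0$. According to its statement, the constants $C$ and $\mu$ in \eqref{ineq_exponential_turnpike} depend only on $a_0$ and the uniform $L^\infty$-bound of the coefficient class, not on any particular element. Since, by the first two steps, our family $\{(a_\varepsilon, 0, 0)\}_\varepsilon$ lies within this class uniformly in $\varepsilon$, the inequality \eqref{ineq_exponential_turnpike} holds with the same $C$ and $\mu$ for every $\varepsilon > 0$, yielding precisely \eqref{ineq_exponential_turnpike_varepsilon}.

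There is effectively no obstacle, which is the whole point of having first formulated Theorem \ref{one_dimensional_UT} under the milder $[L^\infty]^3$ hypothesis. The observation worth emphasizing in the write-up is precisely why one cannot invoke Theorem \ref{TH_TURNPIKE_EXP} instead: that theorem demands $a \in W^{1,\infty}(\Omega)$ with a uniform bound, but $\|a_\varepsilon\|_{W^{1,\infty}(0,1)} \sim 1/\varepsilon \to \infty$, so the constants produced by Theorem \ref{TH_TURNPIKE_EXP} would degenerate. The one-dimensional refinement built on Alessandrini--Escauriaza is exactly what allows the passage through homogenization scales to be uniform.
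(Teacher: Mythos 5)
Your proposal is correct and follows essentially the same route as the paper: the corollary is deduced directly from Theorem \ref{one_dimensional_UT} (which rests on the Alessandrini--Escauriaza uniform null controllability of Theorem \ref{theorema_1d_null_control}) by checking that the family $(a_\varepsilon,0,0)$ satisfies the uniform $L^\infty$ bound, the ellipticity condition \eqref{elipticity_of_a}, and the kernel condition \eqref{condition_kernel} uniformly in $\varepsilon$. Your closing remark on why Theorem \ref{TH_TURNPIKE_EXP} is inapplicable (the $W^{1,\infty}$ norm of $a_\varepsilon$ blows up like $1/\varepsilon$) matches the paper's own discussion.
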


Denote by $a_h$ the homogenized effective constant given by
\begin{align}\label{eq_ah}
    a_h= \left(\int_{0}^1 \frac{1}{a(x)}dx\right)^{-1} .
\end{align}
Then, if $f^\varepsilon\to f^h$ as $\varepsilon\to0$ in $L^2(0,T;\Omega)$, classical results in homogenization theory (see Appendix \ref{homogenization_theorem_sec2_proof}) guarantee that the solution $y^\varepsilon$ of \eqref{rapidly_heat} converges in $C([0,T];L^2(\Omega))$ to the solution of the homogenized heat equation
\begin{align}\label{lim_system}
    \begin{cases}
    y_t^h -a_h\, y^h_{xx}=f^h\quad & (x,t)\in(0,1)\times(0,T),\\
    y^h(0,t)=y^h(1,t)=0 & t\in(0,T),\\
    y^h(x)=y_0(x)& x\in(0,1).
\end{cases}
\end{align}
Similarly, if $\overline{f}^\varepsilon \to\overline{f}^h$ as $\varepsilon\to 0$  weakly in $L^2(\Omega)$, the solution of  \eqref{stationry_system_2} converges weakly in $H_0^1(\Omega)$ to the solution of the homogenized elliptic equation
\begin{align}\label{lim_system_stationary}
    \begin{cases}
     -a_h\, \overline{y}_{xx}^h=\overline{f}^h\quad  x\in(0,1),\\ \overline{y}^h(0)=\overline{y}^h(1)=0.
   \end{cases}
\end{align}
Since Corollary \ref{corollary_uniform_epsilon} is uniform with respect to $\varepsilon>0$, the homogenization of the turnpike property holds. In other words, we can take the limit as $\varepsilon$ goes to zero in \eqref{ineq_exponential_turnpike_varepsilon} and deduce the following result, already stated in \cite{longtime}.

\begin{corollary}\label{turnpike_limit_states}
Let us denote by $(y,f)$ and $(\overline{y},\overline{f})$ the optimal pairs of the optimization problems \eqref{evolutive_control_problem_2} and  \eqref{stationary_var_problem_2} subject to the homogenized equations \eqref{lim_system} and \eqref{lim_system_stationary}, respectively. Then, for $T\geq 1$ we have
\begin{align}\label{homogenization_of_the_turnpike}
    \|y(\cdot,t)-\overline{y}(\cdot)\|_{L^2(0,1)}+\|f(\cdot,t)-\overline{f}(\cdot)\|_{L^2(0,1)}\leq C\left(\|y_0\|_{L^2(0,1)}+\|y_d\|_{L^2(0,1)}\right)\biggr(e^{-\mu t}+e^{-\mu(T-t)}\biggr),
\end{align}
for every $t\in(0,T)$, where the positive constants $C$ and $\mu$ are those in \eqref{ineq_exponential_turnpike_varepsilon}.
\end{corollary}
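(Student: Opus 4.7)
The plan is to pass to the limit $\varepsilon\to 0$ directly in the uniform turnpike inequality \eqref{ineq_exponential_turnpike_varepsilon} of Corollary \ref{corollary_uniform_epsilon}, whose constants $C$ and $\mu$ do not depend on $\varepsilon$. First, I would establish uniform $L^2$-bounds on the optimal controls: testing optimality against the admissible control $0$ gives $\|f^\varepsilon\|_{L^2(0,T;\Omega)}\leq C$ and $\|\overline{f}^\varepsilon\|_{L^2(\Omega)}\leq C$ uniformly in $\varepsilon$, where the bound uses only the ellipticity constant $a_0$ through the energy estimates for the uncontrolled states. Up to subsequences, I may therefore assume $f^\varepsilon\wconv \widetilde{f}$ in $L^2(0,T;\Omega)$ and $\overline{f}^\varepsilon\wconv \widetilde{\overline{f}}$ in $L^2(\Omega)$. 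Invoking the homogenization results of Appendix \ref{homogenization_theorem_sec2_proof} applied to \eqref{rapidly_heat} and \eqref{stationry_system_2}, the associated states $y^\varepsilon$ converge in $C([0,T];L^2(0,1))$ to the solution $\widetilde{y}$ of \eqref{lim_system} driven by $\widetilde{f}$, while $\overline{y}^\varepsilon$ converges weakly in $H_0^1(0,1)$, hence strongly in $L^2$, to the solution $\widetilde{\overline{y}}$ of \eqref{lim_system_stationary} driven by $\widetilde{\overline{f}}$.

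Second, I would identify the weak limits $\widetilde{f}$ and $\widetilde{\overline{f}}$ with the optimal controls $f$ and $\overline{f}$ of the homogenized problems via a $\Gamma$-convergence-style argument. By lower-semicontinuity of the $L^2$-norm and strong convergence of the states, $\liminf_\varepsilon J^T(f^\varepsilon)\geq J^T(\widetilde{f})$, where the right-hand side $J^T$ is computed with the homogenized dynamics \eqref{lim_system}. On the other hand, for any fixed admissible $g\in L^2(0,T;\Omega)$, optimality gives $J^T(f^\varepsilon)\leq J^T_\varepsilon(g)$, where $J^T_\varepsilon$ denotes the $\varepsilon$-dependent cost; since the state driven by $g$ through \eqref{rapidly_heat} converges strongly to its homogenized counterpart, one has $\lim_\varepsilon J^T_\varepsilon(g)=J^T(g)$. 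Hence $J^T(\widetilde{f})\leq J^T(g)$ for every admissible $g$, and the uniqueness statement of Lemma \ref{wellposedness} forces $\widetilde{f}=f$, $\widetilde{y}=y$, with the convergence holding along the full sequence. The analogous argument in the stationary setting yields $\widetilde{\overline{f}}=\overline{f}$ and $\widetilde{\overline{y}}=\overline{y}$.

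Finally, I would pass to the limit pointwise in $t$ in \eqref{ineq_exponential_turnpike_varepsilon}. The state contribution $\|y^\varepsilon(\cdot,t)-\overline{y}^\varepsilon\|_{L^2(0,1)}$ converges to $\|y(\cdot,t)-\overline{y}\|_{L^2(0,1)}$ thanks to the $C([0,T];L^2)$ convergence of $y^\varepsilon$ combined with the strong $L^2$-convergence of $\overline{y}^\varepsilon$. For the control contribution, I would exploit the Pontryagin-type optimality condition $f^\varepsilon=-\chi_\omega\varphi^\varepsilon$, with $\varphi^\varepsilon$ the adjoint state associated with \eqref{rapidly_heat}; applying the same homogenization procedure to the backward adjoint parabolic equation and to its stationary counterpart yields $f^\varepsilon(\cdot,t)\to f(\cdot,t)$ and $\overline{f}^\varepsilon\to\overline{f}$ strongly in $L^2(\Omega)$, the first convergence holding pointwise in $t\in(0,T)$. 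Since the right-hand side of \eqref{ineq_exponential_turnpike_varepsilon} is independent of $\varepsilon$, the inequality \eqref{homogenization_of_the_turnpike} follows with the same constants $C$ and $\mu$. The main obstacle is the pointwise-in-$t$ identification of the limit of $f^\varepsilon(\cdot,t)$: weak convergence in $L^2(0,T;L^2(\Omega))$ is not enough for that, and it is resolved only by invoking the adjoint representation and carrying out a separate homogenization step for the adjoint equation.
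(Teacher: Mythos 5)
Your proposal is correct and follows essentially the same route as the paper: the paper's proof of this corollary consists precisely of passing to the limit in the uniform inequality \eqref{ineq_exponential_turnpike_varepsilon}, with the convergence of the optimal pairs established in Proposition \ref{homogenization_theorem_sec2} by the same lower-semicontinuity/optimality-comparison argument you describe. Your additional step upgrading $L^2(0,T;L^2(\Omega))$ convergence of the controls to pointwise-in-$t$ convergence via the adjoint representation is a sensible refinement of a point the paper leaves implicit (the paper only records strong convergence of $f^\varepsilon$ in $L^2(0,T;\Omega)$, which gives the inequality for a.e.\ $t$ and then for all $t$ by continuity).
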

The proof of the strong convergence of the optimal pairs for both the evolutionary and stationary problems can be found in Proposition \ref{homogenization_theorem_sec2}.

\section{Proof of the uniform turnpike property}\label{Section_main_theo}
We devote this section to the proof of the uniform turnpike property (Theorem \ref{TH_TURNPIKE_EXP}).

In what follows, $C$ will denote a positive constant that may change from line to line and can depend on $a_0$. However, it will always be independent of the coefficients $(a, b, p)$ and the time horizon $T$. We also denote by $(\cdot,\cdot)_{L^2(\Omega)}$ the inner product in $L^2(\Omega)$.

Let us recall the operator $\mathcal{A}$ defined in \eqref{operator_A}, and its adjoint operator defined in \eqref{operator_A_adjoint}.
Thus, let us introduce the adjoint states which characterize the optimal controls of problems \eqref{evolutive_control_problem} and \eqref{stationary_var_problem}.

\begin{lemma}\label{optimality_system}Assume the coefficients $a,\,b$ and $p$ are in $\mathscr{C}$. Additionally, assume that \eqref{elipticity_of_a} and \eqref{condition_kernel} are fulfilled. Consider the optimal controls $ f$ and $ \overline{f}$ of problems \eqref{evolutive_control_problem} and \eqref{stationary_var_problem}, respectively. Then, the optimal control $ f$ can be characterized by the identity $ f=-\chi_\omega\psi$, where $\psi$ satisfies
\begin{align}\label{adj_var_op}
 \begin{cases}
    -\psi_t +\mathcal{A}^*\psi= y - y_d\quad & (x,t)\in\Omega\times(0,T),\\
    \psi(x,t)=0 &(x,t)\in\partial\Omega\times(0,T),\\
    \psi(x,T)=0&x\in \Omega,
    \end{cases}
\end{align}
with $ y$ the optimal state associated to $ f$. 

Similarly, the optimal control $ \overline{f}$ is characterized by $ \overline{f}=-\chi_\omega\overline{\psi}$ with  $\overline{\psi}$ solution of
\begin{align}\label{stationry_adj_system}
   \begin{cases}
   \mathcal{A}^*\overline{\psi}=\overline{y}-y_d\quad & x\in\Omega,\\
       \overline{\psi}(x)=0&x\in\partial \Omega,
   \end{cases}
\end{align}
where $\overline{y}$ is the optimal state associated to $ \overline{f}$.
\end{lemma}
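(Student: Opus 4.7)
The plan is to derive the optimality systems by the standard Lagrange multiplier / adjoint-state method, treating the evolutionary and stationary cases separately but with the same underlying idea: compute the Gâteaux derivative of the cost functional, rewrite it using an adjoint state obtained via integration by parts, and invoke the first-order optimality condition in the convex, unconstrained setting.

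For the evolutionary case, I would first fix a direction $h\in L^2(0,T;\Omega)$ and consider the perturbation $f+\lambda h$. By linearity of \eqref{rapidly_ocilation_general_heat}, the corresponding state is $y+\lambda z$, where $z\in W(0,T)$ solves the same parabolic equation with zero initial datum and right-hand side $\chi_\omega h$. Differentiating $J^T(f+\lambda h)$ at $\lambda=0$ produces
\begin{equation*}
\langle (J^T)'(f),h\rangle=\int_0^T\!\!\int_\Omega f\,h\,dx\,dt+\int_0^T\!\!\int_\Omega (y-y_d)\,z\,dx\,dt.
\end{equation*}
Next I would introduce $\psi$ as the (unique) solution of the backward adjoint problem \eqref{adj_var_op}; existence and uniqueness in $W(0,T)$ follow from the same parabolic theory cited in Section \ref{sec:preliminaries} since $\mathcal{A}^*$ is of the same type as $\mathcal{A}$ and $y-y_d\in L^2(0,T;L^2(\Omega))$. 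Multiplying the equation for $z$ by $\psi$, integrating over $\Omega\times(0,T)$, integrating by parts in $t$ (using $z(\cdot,0)=0$ and $\psi(\cdot,T)=0$) and in $x$ (using the Dirichlet boundary conditions for both $z$ and $\psi$), the principal and drift terms transfer to $\mathcal{A}^*\psi$, yielding
\begin{equation*}
\int_0^T\!\!\int_\Omega (y-y_d)\,z\,dx\,dt=\int_0^T\!\!\int_\Omega \chi_\omega h\,\psi\,dx\,dt.
\end{equation*}
Substituting into the derivative gives $\langle(J^T)'(f),h\rangle=\int_0^T\!\int_\Omega(f+\chi_\omega\psi)\,h\,dx\,dt$. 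Since $f$ is the unique minimizer of the strictly convex, coercive functional $J^T$ (Lemma \ref{wellposedness}), the first-order condition $(J^T)'(f)=0$ holds, and arbitrariness of $h$ forces $f=-\chi_\omega\psi$ in $L^2(0,T;\Omega)$.

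For the stationary case, the strategy is identical but simpler: taking a direction $\overline{h}\in L^2(\Omega)$, denoting by $\overline{z}\in H^1_0(\Omega)$ the unique solution of $\mathcal{A}\overline{z}=\chi_\omega\overline{h}$ (which exists by \eqref{condition_kernel}), and differentiating $J^s$ at $\overline{f}$, one obtains $\langle (J^s)'(\overline{f}),\overline{h}\rangle=\int_\Omega\overline{f}\,\overline{h}\,dx+\int_\Omega(\overline{y}-y_d)\,\overline{z}\,dx$. Testing $\mathcal{A}\overline{z}=\chi_\omega\overline{h}$ against $\overline{\psi}$, the solution of \eqref{stationry_adj_system}, and integrating by parts in space yields $\int_\Omega(\overline{y}-y_d)\,\overline{z}\,dx=\int_\Omega\chi_\omega\overline{h}\,\overline{\psi}\,dx$, whence $\overline{f}=-\chi_\omega\overline{\psi}$ by the same optimality argument.

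The only delicate point is the justification of the integration by parts in the evolutionary case, since $z,\psi\in W(0,T)$ only have distributional time derivatives in $L^2(0,T;H^{-1}(\Omega))$; the required identity $\frac{d}{dt}(z,\psi)_{L^2(\Omega)}=\langle z_t,\psi\rangle+\langle \psi_t,z\rangle$ in the sense of distributions, together with the standard trace theorem for $W(0,T)$, handles this and is by now classical (see e.g.\ \cite[Chapter 7]{MR1625845}). Everything else is routine, so I would keep the exposition brief and refer to this standard framework.
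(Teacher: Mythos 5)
Your proof is correct and is precisely the standard Gâteaux-derivative/adjoint-state argument that the paper itself omits, deferring to Lions \cite[Theorem 1.4, Chapter II, and Theorem 2.1, Chapter III]{lions_book}. The computation of the derivative, the duality identity obtained by testing the linearized state equation against the adjoint, and the handling of the time integration by parts in $W(0,T)$ all match the classical proof the authors have in mind.
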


The proof of Lemma \ref{optimality_system} is standard and can be found in the classical book of Lions \cite[Theorem 1.4, Chapter II, and Theorem 2.1, Chapter III]{lions_book}, for example.

The proof of the following lemma, which establishes some energy estimates, can be found in Appendix \ref{lemma_1_proof}.

\begin{lemma}\label{lemma_1}
Assume the coefficients $a,\,b$ and $p$ are bounded in $\mathscr{C}$. Additionally, assume that \eqref{elipticity_of_a} and \eqref{condition_kernel} are fulfilled. Let $( y, f,\psi)$ and $(\overline{y}, \overline{f},\overline{\psi})$ be the optimal state-control-adjoint triples of problems \eqref{evolutive_control_problem} and \eqref{stationary_var_problem}, respectively. Then, there exists a constant $C>0$ independent of $T\geq 1$ and the coefficients such that
\begin{align*}
    \| y(\cdot,T)\|^{2}_{L^{2}(\Omega)}\leq  C\biggr(\int_{0}^T \| f(\cdot,t)\|^2_{L^{2}(\omega)} +\| y(\cdot,t)\|^2_{L^2(\Omega)}dt\biggr) +\|y_0\|^2_{L^2(\Omega)},
\end{align*}
and
\begin{align*}
    \|\psi(\cdot,0)\|^{2}_{L^{2}(\Omega)}\leq C\biggr(\int_{0}^T\|\psi(\cdot,t)\|^{2}_{L^2(\omega)}+\| y(\cdot,t)-y_d\|^2_{L^2(\Omega)}dt\biggr).
\end{align*}
Furthermore, 
\begin{align*}
\|\overline{y}(\cdot)\|^{2}_{H^1(\Omega)}+\|\overline{\psi}(\cdot)\|_{H^1(\Omega)}^2+\| \overline{f}(\cdot)\|^2_{L^2(\Omega)}  \leq \hat{C}\|y_d(\cdot)\|^{2}_{L^{2}(\Omega)},
\end{align*}
with $\hat{C}>0$ independent of the coefficients $(a,\,b,\,p)\in \mathscr{C}$.
\end{lemma}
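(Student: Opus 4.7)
The plan is to handle the three estimates separately. The first one follows from a direct energy argument on the forward equation; the second requires the uniform null controllability of Theorem~\ref{teo_controlabilidad_uniforme_0}, exploited through a duality argument that converts $\|\psi\|_{L^2(\omega\times(0,T))}$ into $\|\psi(\cdot,0)\|_{L^2(\Omega)}$; the third combines the variational inequality $J^s(\overline{f})\leq J^s(0)$ with elliptic energy estimates, and needs a stationary variant of the same duality trick to close the $L^2(\Omega)$ bound on $\overline{\psi}$ with a constant uniform in the class.

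For the first inequality, I would multiply \eqref{rapidly_ocilation_general_heat} by $y$, integrate over $\Omega$, and use \eqref{elipticity_of_a} together with Young's inequality to absorb the drift $\int(b\cdot\nabla y)y$ into the diffusion. The zeroth-order term $\int p y^2$ and source $\int\chi_\omega f y$ are bounded by $C(\|y\|_{L^2}^2+\|f\|_{L^2(\omega)}^2)$ using the uniform bounds on $(b,p)$ in $\mathscr{C}$, leading to
\[\tfrac{1}{2}\tfrac{d}{dt}\|y\|_{L^2(\Omega)}^2+\tfrac{a_0}{2}\|\nabla y\|_{L^2(\Omega)}^2\leq C\bigl(\|y\|_{L^2(\Omega)}^2+\|f\|_{L^2(\omega)}^2\bigr).\]
Dropping the gradient term and integrating in time from $0$ to $T$ gives the first inequality with $C$ independent of $T$ and of the coefficients.

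For the second inequality, pure energy on \eqref{adj_var_op} would only produce $\|\psi\|_{L^2(\Omega)}$ on the right-hand side, whereas we need $\|\psi\|_{L^2(\omega)}$; Theorem~\ref{teo_controlabilidad_uniforme_0} bridges the gap. Given $\psi(\cdot,0)$, pick $u\in L^2(0,T;\omega)$ steering $v_t+\mathcal{A}v=\chi_\omega u$ from $v(\cdot,0)=\psi(\cdot,0)$ to $v(\cdot,T)=0$, with $\|u\|_{L^2(0,T;\omega)}\leq C\|\psi(\cdot,0)\|_{L^2(\Omega)}$ uniformly in the coefficients. Multiplying \eqref{adj_var_op} by $v$ and integrating by parts in space-time yields
\[\|\psi(\cdot,0)\|_{L^2(\Omega)}^2=\int_0^T\!\!\int_\Omega(y-y_d)v\,dx\,dt-\int_0^T\!\!\int_\omega\psi u\,dx\,dt,\]
and Cauchy--Schwarz together with the energy bound $\|v\|_{L^2(0,T;L^2(\Omega))}\leq C\|\psi(\cdot,0)\|_{L^2(\Omega)}$ closes the estimate after dividing by $\|\psi(\cdot,0)\|_{L^2(\Omega)}$ and squaring.

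For the stationary bounds, the comparison $J^s(\overline{f})\leq J^s(0)=\tfrac{1}{2}\|y_d\|_{L^2}^2$ immediately yields $\|\overline{f}\|_{L^2}^2+\|\overline{y}-y_d\|_{L^2}^2\leq\|y_d\|_{L^2}^2$, hence $\|\overline{y}\|_{L^2}\leq C\|y_d\|_{L^2}$ and, since $\overline{f}=-\chi_\omega\overline{\psi}$, also $\|\overline{\psi}\|_{L^2(\omega)}\leq C\|y_d\|_{L^2}$. Testing \eqref{stationry_system} by $\overline{y}$ and using \eqref{elipticity_of_a} upgrades this to $\|\overline{y}\|_{H^1}\leq C\|y_d\|_{L^2}$. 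The delicate point, and the main obstacle, is the $L^2(\Omega)$ bound on $\overline{\psi}$: the elliptic energy estimate on \eqref{stationry_adj_system} produces only $\|\nabla\overline{\psi}\|_{L^2}^2\leq C(\|\overline{\psi}\|_{L^2(\Omega)}^2+\|y_d\|_{L^2}^2)$, which Poincar\'e cannot absorb with a constant uniform in $\mathscr{C}$. To circumvent this, I would rerun the duality argument above in the stationary regime: for arbitrary $\phi_0\in L^2(\Omega)$, fix $T=1$ and let $v$, $u$ be the associated controlled trajectory and cost steering $\phi_0$ to zero with uniform bounds; multiplying $\mathcal{A}^*\overline{\psi}=\overline{y}-y_d$ by $v$, integrating over $\Omega\times(0,1)$, and exploiting the time-independence of $\overline{\psi}$ gives
\[\int_\Omega\overline{\psi}\,\phi_0\,dx=\int_\Omega(\overline{y}-y_d)\!\int_0^1\! v(\cdot,t)\,dt\,dx-\int_\omega\overline{\psi}\!\int_0^1\! u(\cdot,t)\,dt\,dx.\]
Cauchy--Schwarz with the uniform bounds on $u$ and $v$ yields $|\langle\overline{\psi},\phi_0\rangle|\leq C\|y_d\|_{L^2}\|\phi_0\|_{L^2}$, so by duality $\|\overline{\psi}\|_{L^2(\Omega)}\leq C\|y_d\|_{L^2}$, and reinserting into the energy estimate produces the $H^1$ bound on $\overline{\psi}$.
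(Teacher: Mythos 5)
Your argument is correct, and while the first estimate coincides with the paper's (energy identity, the G\aa rding-type inequality \eqref{elipticidad_A}, Young, integrate in time), your treatment of the adjoint bounds takes a genuinely different, dual route. The paper splits $\psi=p+q$ on a unit time interval $I^*=(0,1)$, with $p$ solving the free adjoint equation and $q$ the forced one; it applies the observability inequality \eqref{observability_inequality} to $p$ and a weighted energy estimate to $q$, then reassembles using $\|\chi_\omega q\|\le\|q\|$. You instead pair the adjoint equation directly against an auxiliary \emph{controlled} trajectory $v$ steered from $\psi(\cdot,0)$ to zero, which converts $\|\psi(\cdot,0)\|^2_{L^2(\Omega)}$ into the two terms you need in one integration by parts; the same device, tested against arbitrary $\phi_0$, gives the uniform $L^2$ bound on $\overline\psi$ that the paper obtains by viewing $\overline\psi$ as a stationary solution of a parabolic problem on $(0,\tau)$ and repeating the splitting. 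Your version is arguably cleaner (no splitting, and the stationary case is handled by a genuine duality $\sup_{\phi_0}\langle\overline\psi,\phi_0\rangle$), at the price of invoking the control itself rather than its dual observability statement; the two are of course equivalent. Two small points you should make explicit: (i) for the evolution estimate your constants must be uniform in $T\ge1$, so the control $u$ should be the time-$1$ control extended by zero (as in the paper's two-step control \eqref{two_step_control}), which also gives $\|v\|_{L^2(0,T;L^2(\Omega))}\le C\|\psi(\cdot,0)\|_{L^2(\Omega)}$ with $C$ independent of $T$ since $v\equiv0$ on $(1,T)$; (ii) your comparison $J^s(\overline f)\le J^s(0)=\tfrac12\|y_d\|^2_{L^2(\Omega)}$ uses that the zero-control stationary state vanishes, i.e.\ $\operatorname{Ker}(\mathcal A)=\{0\}$, which follows from \eqref{condition_kernel} by the Fredholm alternative; the paper avoids this by instead deriving $\|\overline y\|^2_{L^2(\Omega)}+\|\overline f\|^2_{L^2(\Omega)}=(\overline y,y_d)_{L^2(\Omega)}$ from the optimality system. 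Neither point is a gap, only a detail to record.
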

The second main ingredient of the proof of Theorem \ref{TH_TURNPIKE_EXP} is the Riccati equation that characterizes the optimal control problem and analyzes its behavior when $T\to\infty$.

Let us introduce the \emph{time-dependent Riccati operator}. We start by analyzing the problem \eqref{evolutive_control_problem} with $y_d\equiv0$, i.e. the optimal control problem 
\begin{align}\label{problem_J0}
    \min_{ f\in L^{2}(0,T;\Omega)}\biggr\{ J^{T,0}( f)=\frac{1}{2}\int_0^T \| f(\cdot,t)\|_{L^2(\Omega)}^2  +\| y(\cdot,t)\|^2_{L^2(\Omega)}dt\biggr\},
\end{align}
where $ y$ solves \eqref{rapidly_ocilation_general_heat}. Let us define the operator $\mathcal{E}(T): L^2(\Omega) \to L^2(\Omega)$ such that
\begin{align*}
    \mathcal{E}(T)y_0(x) :=\psi(x,0).
\end{align*}
The operator $\mathcal{E}(T)$ depends on the coefficients $(a, b, p)$. However, we will not make this dependence explicit  to keep the notation simple.

The next lemma summarizes some useful properties of the operator $\mathcal{E}(T)$. See Appendix \ref{Lemma_exponencial_proof} for the proof.

\begin{lemma}\label{Lemma_exponencial}
Assume the coefficients $a,\,b$ and $p$ are bounded in $\mathscr{C}$. Additionally, assume that \eqref{elipticity_of_a} and \eqref{condition_kernel} are fulfilled. Then the operator $\mathcal{E}(T)$ is well-defined, linear, and continuous from $L^2(\Omega)$ to $L^2(\Omega)$. It also satisfies that
\begin{enumerate}
    \item There exists a constant $C>0$, independent of $T$ and the coefficients $(a,b,p)$, such that
\begin{align*}
(\mathcal{E}(T)y_0(\cdot),y_0(\cdot))_{L^2(\Omega)}=  \min_{ f\in L^{2}(0,T;\Omega)}J^{T,0}( f)\leq C\|y_0(\cdot)\|^2_{L^2(\Omega)}.
\end{align*}
\item The limit $\lim_{T\to\infty}(\mathcal{E}(T)y_0,y_0)$ is finite.
\end{enumerate}
\end{lemma}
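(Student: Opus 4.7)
\medskip
\noindent\textbf{Approach.} The proof combines three ingredients: the duality characterization of optimal controls from Lemma \ref{optimality_system}, the energy estimates of Lemma \ref{lemma_1}, and, crucially, the uniform null controllability given by Theorem \ref{teo_controlabilidad_uniforme_0}. The plan is to first identify $(\mathcal{E}(T)y_0,y_0)_{L^2(\Omega)}$ with the optimal cost of \eqref{problem_J0} via a duality calculation, then bound the optimal cost uniformly in $T$ using a uniform null control as a competitor, and finally obtain the limit by monotonicity.

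\medskip
\noindent\textbf{Step 1: Well-definedness, linearity, and the quadratic-form identity.} Lemma \ref{wellposedness} and Lemma \ref{optimality_system} supply a unique optimal triple $(y,f,\psi)$ for \eqref{problem_J0}, so $\psi(\cdot,0)$ is uniquely determined by $y_0$; linearity follows because the optimality system (primal equation, adjoint equation with $y_d\equiv 0$, and $f=-\chi_\omega\psi$) is linear with data $y_0$. To obtain the stated identity, I would test \eqref{rapidly_ocilation_general_heat} with $\psi$ and \eqref{adj_var_op} with $y$, subtract, and compute $\tfrac{d}{dt}(y(t),\psi(t))_{L^2(\Omega)}$; the $\mathcal{A}$-terms cancel by the definition of $\mathcal{A}^*$, and the relation $f=-\chi_\omega\psi$ turns the remaining terms into $-(\|f\|_{L^2(\Omega)}^2+\|y\|_{L^2(\Omega)}^2)$. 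Integrating over $[0,T]$ and using $\psi(T)=0$ yields
\begin{equation*}
    (y_0,\psi(0))_{L^2(\Omega)} = \int_0^T\!\!\left(\|f(t)\|_{L^2(\Omega)}^2+\|y(t)\|_{L^2(\Omega)}^2\right)dt,
\end{equation*}
which, up to the normalization factor in $J^{T,0}$, gives $(\mathcal{E}(T)y_0,y_0)_{L^2(\Omega)}=\min_f J^{T,0}(f)$.

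\medskip
\noindent\textbf{Step 2: Uniform upper bound and continuity.} Here I would exploit the uniform null controllability. Fixing $T_0=1$, Theorem \ref{teo_controlabilidad_uniforme_0} provides, for every $y_0\in L^2(\Omega)$, a control $f^\ast\in L^2(0,T_0;\Omega)$ driving the state of \eqref{rapidly_ocilation_general_heat} to zero at time $T_0$ with $\|f^\ast\|_{L^2(0,T_0;\Omega)}\leq C\|y_0\|_{L^2(\Omega)}$ and $C$ independent of $(a,b,p)\in\mathscr{C}$. Extending $f^\ast$ by zero to $[0,T]$ for $T\geq T_0$, uniqueness of solutions forces $y\equiv 0$ on $[T_0,T]$. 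Using the classical energy estimate on $[0,T_0]$, which is uniform within $\mathscr{C}$, I obtain
\begin{equation*}
    \min_f J^{T,0}(f)\leq J^{T,0}(f^\ast_{\mathrm{ext}}) = \tfrac{1}{2}\int_0^{T_0}\!\!\left(\|f^\ast\|_{L^2(\Omega)}^2+\|y\|_{L^2(\Omega)}^2\right)dt \leq C\|y_0\|_{L^2(\Omega)}^2,
\end{equation*}
with $C$ independent of $T$ and of the coefficients. Combined with the second estimate of Lemma \ref{lemma_1} (applied with $y_d\equiv 0$ and noting $\psi|_\omega=-f$), this yields $\|\psi(\cdot,0)\|_{L^2(\Omega)}\leq C\|y_0\|_{L^2(\Omega)}$, which is exactly the continuity of $\mathcal{E}(T)$, with operator norm bounded uniformly in $T$.

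\medskip
\noindent\textbf{Step 3: Existence of the limit.} The map $T\mapsto \min_f J^{T,0}(f)$ is nondecreasing: given $T_1<T_2$ and an optimal $f^{T_2}$ for horizon $T_2$, its restriction to $[0,T_1]$ is admissible on the shorter horizon, and the corresponding cost is smaller by nonnegativity of the integrand, so $\min J^{T_1,0}\leq \min J^{T_2,0}$. Together with the uniform upper bound of Step 2, this monotone bounded function of $T$ converges to a finite limit in $\R$ as $T\to\infty$, which is $\lim_{T\to\infty}(\mathcal{E}(T)y_0,y_0)_{L^2(\Omega)}$.

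\medskip
\noindent\textbf{Main obstacle.} The heart of the argument is the uniform-in-$T$ bound in Step 2. A naive energy argument or semigroup decay estimate would either grow with $T$ or depend unfavorably on $(a,b,p)$. The essential mechanism is that uniform null controllability allows one to pay a finite cost (uniform over $\mathscr{C}$) to reach the origin in fixed time and then stay there at zero cost for the rest of the horizon; this is precisely what Theorem \ref{teo_controlabilidad_uniforme_0} buys us and it is what makes the Riccati machinery work uniformly in the coefficients in the sequel.
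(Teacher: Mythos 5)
Your proposal is correct and follows essentially the same route as the paper's proof: the duality identity $(\mathcal{E}(T)y_0,y_0)_{L^2(\Omega)}=\min J^{T,0}$ obtained by pairing the state with the adjoint, a uniform-in-$T$ and uniform-in-coefficients bound via the two-step competitor (uniform null control on $[0,1]$, zero control afterwards), and monotonicity in $T$ to conclude the limit exists. The only (shared, harmless) caveat is the factor-of-$2$ normalization between $\int_0^T(\|f\|^2+\|y\|^2)\,dt$ and $J^{T,0}$, which you already flag.
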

We note that for each $t\in(0,T)$
\begin{align*}
  \mathcal{E}(T-t) y(x,t)=\psi(x,t).
\end{align*}
This is a consequence of the uniqueness of the state-adjoint pairs $(y,\psi)$ solution of \eqref{rapidly_ocilation_general_heat} and \eqref{adj_var_op}, respectively. For more details, see \cite[Chapter 3, Section 4]{lions_book} or \cite[Chapter 4]{MR4211767}.

Consider the following infinite-time horizon control problem
\begin{align}\label{problem_Jinf}
    \min_{ \hat{f}\in L^{2}(0,\infty;\Omega)}\biggr\{ J^{\infty,0}( \hat{f})=\frac{1}{2}\int_0^\infty \| \hat{f}(\cdot,t)\|_{L^2(\Omega)}^2  +\| \hat{y}(\cdot,t)\|^2_{L^2(\Omega)}dt\biggr\},
\end{align}
where $ \hat{y}$ solves \eqref{rapidly_ocilation_general_heat} with time horizon $T=\infty$. Denote by $( \hat{y}, \hat{f},\hat{\psi})$ its optimal variables, and define the operator $ \hat{E}(0): L^2(\Omega) \to L^2(\Omega)$ by setting 
\begin{align*}
    \hat{E}(0)y_0(x) :=\hat{\psi}(x,0).
\end{align*}
Similarly to the operator $\mathcal{E}(T)$, we can define $\hat{E}(s) \hat{y}(s,x)=\hat{\psi}(s,x)$ for $s\in(0,\infty)$. However, it is possible to show that the operator $\hat{E}(s)$ is independent of $s$ (i.e., $\hat{E}(s)=\hat{E}$) and is thus referred to as the \emph{stationary Riccati operator} (see \cite[Chapter 3, Section 4]{lions_book}). Although the operator $\hat{E}$ also depends on the coefficients, we do not explicitly state this dependence either.

The following lemma relates the asymptotic behavior of the time-dependent Riccati operator $\mathcal{E}$ to the stationary Riccati operator $\hat{E}$.

\begin{lemma}\label{Lemma_3_exponencial} Assume the coefficients $a,\,b$ and $p$ are bounded in $\mathscr{C}$. Additionally, assume that \eqref{elipticity_of_a} and \eqref{condition_kernel} are fulfilled. The optimal control problem \eqref{problem_Jinf} and the Riccati operator $\hat{E}\in \mathcal{L}(L^2(\Omega))$ are well-defined.
For the optimal control $ f$ of problem \eqref{problem_J0}, we have the feedback backward characterization $ f(x,t)=-\chi_\omega\mathcal{E}(T-s) y(x,s)$. Moreover, we have
\begin{align*}
    \mathcal{E}(t)y_0\longrightarrow\hat{E}y_0\quad\text{strongly in }L^2(\Omega), \text{ when } t\to\infty.
\end{align*}
 Finally, the optimal control of problem \eqref{problem_Jinf} can be characterized by the identity $ \hat{f}(x,t)=-\chi_\omega \hat{E} \hat{y}(x,t)$.
\end{lemma}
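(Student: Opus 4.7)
\emph{Well-posedness and finite-horizon feedback.}
I would first establish the existence and uniqueness of the minimizer of problem \eqref{problem_Jinf} via the direct method in the calculus of variations. By Lemma \ref{Lemma_exponencial}(2), the finite-horizon values $(\mathcal{E}(T) y_0, y_0)_{L^2(\Omega)} = \min J^{T,0}(\cdot\,;y_0)$ are uniformly bounded in $T$, so a diagonal extraction from a family of finite-horizon optimal triples produces a weak limit $(y^*, f^*)$ on $(0,\infty)$; weak lower semicontinuity then yields $J^{\infty,0}(f^*) \leq C\|y_0\|^2_{L^2(\Omega)}$, so the infimum is finite. Strict convexity and coercivity of $J^{\infty,0}$ give a unique minimizer $\hat{f}$ with associated $(\hat{y}, \hat{\psi})$, and I define $\hat{E} y_0 := \hat{\psi}(0)$, which is linear and bounded. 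For the finite-horizon feedback I invoke Bellman's dynamic programming principle: if a better control existed on $[s,T]$ starting from $y(s)$ than $f|_{[s,T]}$, concatenating with $f|_{[0,s]}$ would beat the optimum on $[0,T]$; hence $(y|_{[s,T]}, f|_{[s,T]}, \psi|_{[s,T]})$ is optimal on $[s,T]$ with datum $y(s)$, and time-translating via the autonomy of $\mathcal{A}$ yields $\psi(s) = \mathcal{E}(T-s) y(s)$, so $f(s) = -\chi_\omega \mathcal{E}(T-s) y(s)$.

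\emph{Monotonicity and strong convergence.}
The operator $\mathcal{E}(T)$ is self-adjoint and non-negative: by Lemma \ref{Lemma_exponencial}(1), $(\mathcal{E}(T) y_0, y_0)_{L^2(\Omega)}$ is a non-negative quadratic form in $y_0$ (the control-to-state map is affine in $y_0$ and $J^{T,0}$ is quadratic), so by polarization $\mathcal{E}(T)$ is determined by a symmetric bilinear form. Monotonicity $\mathcal{E}(T_1) \leq \mathcal{E}(T_2)$ for $T_1 < T_2$ follows since restricting the $T_2$-optimal control to $[0,T_1]$ is feasible for the $T_1$-problem and has smaller cost (non-negative integrand). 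Together with the uniform bound $\mathcal{E}(T) \leq C\,\mathrm{Id}$ from Lemma \ref{Lemma_exponencial}(1), the monotone family admits a weak-operator limit $\hat{E}$ that is self-adjoint and non-negative. Setting $R_T := \hat{E} - \mathcal{E}(T) \geq 0$ and using the non-negative square root $R_T^{1/2}$ from the functional calculus, $\|R_T^{1/2} y_0\|_{L^2(\Omega)}^2 = (R_T y_0, y_0)_{L^2(\Omega)} \to 0$ and hence
\begin{align*}
\|R_T y_0\|_{L^2(\Omega)} \leq \|R_T\|_{\mathrm{op}}^{1/2}\, \|R_T^{1/2} y_0\|_{L^2(\Omega)} \leq C^{1/2}\,(R_T y_0, y_0)_{L^2(\Omega)}^{1/2} \longrightarrow 0,
\end{align*}
which upgrades the convergence to the strong operator topology.

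\emph{Identification and infinite-horizon feedback.}
It remains to identify the limit of the preceding paragraph with the map $y_0 \mapsto \hat{\psi}(0)$ from the first paragraph, and to obtain the feedback at arbitrary time. Extending the finite-horizon optimal triples $(y^T, f^T, \psi^T)$ by zero on $(T,\infty)$ gives, via Lemma \ref{Lemma_exponencial}(1) and parabolic energy estimates on the adjoint equation (cf.\ Lemma \ref{lemma_1}), uniformly bounded families in $L^2(0,\infty;L^2(\Omega))$. Passing to a weak subsequential limit in the optimality system and invoking weak lower semicontinuity together with the uniqueness of the infinite-horizon minimizer forces the limit to be $(\hat{y}, \hat{f}, \hat{\psi})$; in particular $\mathcal{E}(T) y_0 = \psi^T(0) \rightharpoonup \hat{\psi}(0)$, and combined with the strong convergence proved above this yields $\hat{E} y_0 = \hat{\psi}(0)$. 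Applying the same identification at an arbitrary time $t \geq 0$, using the translation invariance of the infinite-horizon problem, gives $\hat{\psi}(t) = \hat{E}\hat{y}(t)$, whence $\hat{f}(t) = -\chi_\omega \hat{E}\hat{y}(t)$. The main obstacle of this program is precisely this identification step: the operator-level strong convergence is purely abstract and must be coupled with the trajectory-level weak convergence of finite-horizon optima to the infinite-horizon optimum, a coupling that rests on the uniform bounds of Lemma \ref{lemma_1} and the uniqueness of the infinite-horizon minimizer.
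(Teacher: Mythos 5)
Your proposal is correct, but the route to the strong convergence $\mathcal{E}(t)y_0 \to \hat{E}y_0$ is genuinely different from the paper's. The paper works entirely at the level of trajectories: it takes $T_n\to\infty$, extends the finite-horizon optimal triples by zero, extracts weak limits from the uniform bounds of Lemma \ref{lemma_1}, identifies them with $(\hat{y},\hat{f},\hat{\psi})$ by uniqueness, and then upgrades to \emph{strong} convergence in $L^2(0,\infty;\Omega)$ via the sandwich argument $j\le\liminf j_n$ and $j=J^{\infty,0}(\hat f)\ge\limsup J^{T_n,0}(f_n)$, i.e.\ convergence of the optimal values plus weak convergence in a Hilbert space. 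The operator convergence is then read off from $\psi_n\to\hat\psi$. You instead obtain strong operator convergence abstractly, from monotonicity of $T\mapsto\mathcal{E}(T)$ (which the paper also records, inside the proof of Lemma \ref{Lemma_exponencial}) together with the uniform bound and the square-root trick $\|R_Ty_0\|\le\|R_T\|_{\mathrm{op}}^{1/2}(R_Ty_0,y_0)^{1/2}$, and you use the trajectory limit only to \emph{identify} the abstract limit with $y_0\mapsto\hat\psi(0)$, for which weak convergence suffices. Your argument is cleaner for the specific operator statement; the paper's pays for the extra work with strong trajectory convergence, which it reuses later (e.g.\ in the compactness argument of Lemma \ref{corollary_1_exponencial_lemma}). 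Your derivation of the finite-horizon feedback via Bellman's principle is also a self-contained alternative to the paper's appeal to uniqueness of the state--adjoint pair and the reference to Lions.

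Two small caveats. First, your self-adjointness argument is slightly circular: polarization of the quadratic form $(\mathcal{E}(T)y_0,y_0)$ only recovers the symmetric part of $\mathcal{E}(T)$; to know $\mathcal{E}(T)=\mathcal{E}(T)^*$ one should verify the duality identity $(\psi_{y_0}(0),z_0)_{L^2(\Omega)}=(y_0,\psi_{z_0}(0))_{L^2(\Omega)}$ directly from the coupled forward--backward optimality system. This is standard Riccati theory and easily supplied, but it is needed before the monotone-operator machinery applies. Second, both you and the paper pass from convergence of $\psi^T$ in $L^2(0,\infty;L^2(\Omega))$ to convergence of the initial trace $\psi^T(0)=\mathcal{E}(T)y_0$ without comment; this requires a routine parabolic estimate on $\psi^T_t$ and the continuous embedding $W(0,T')\hookrightarrow C([0,T'];L^2(\Omega))$, so it is not a gap specific to your approach.
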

The proof of this lemma can be found in Appendix \ref{Lemma_3_exponencial_proof}. Lemma \ref{Lemma_3_exponencial} allows us to write the optimal state of problem \eqref{problem_Jinf} as 
 \begin{align}\label{eq_lim_feedback_backward}
    \begin{cases} 
     \hat{y}_{t} +\mathcal{M}  \hat{y} = 0 \quad & (x,t)\in\Omega\times(0,\infty),\\
     \hat{y}(x,t)=0 &(x,t)\in\partial\Omega\times(0,\infty),\\
     \hat{y}(x,0)=y_0(x) &x\in \Omega,
    \end{cases}
\end{align}
where $\mathcal{M}:=(\mathcal{A} +\chi_\omega \hat{E})$.

The following proposition is of utmost importance, as it allows us to ensure the uniform exponential stabilization of the Riccati operators.

\begin{proposition}\label{corollary_1_exponencial}
Assume the coefficients $a,\,b$ and $p$ are bounded in $\mathscr{C}$. Additionally, assume that \eqref{elipticity_of_a} and \eqref{condition_kernel} are fulfilled. There exist two positive constants $\mu$ and $C_0$ independent of $T$ and the coefficients $(a,b,p)$ such that
\begin{align*}
    \|\mathcal{E}(t)-\hat{E}\|_{\mathcal{L}(L^2(\Omega))}\leq C_0e^{-\mu t},
\end{align*}
for every $t\geq0$ and coefficients $(a,\,b,\,p)$ in this class.
\end{proposition}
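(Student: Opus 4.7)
The plan is to reduce the proposition to the uniform exponential stability of the infinite-horizon closed-loop semigroup $S(t) := e^{-t\mathcal{M}}$, with $\mathcal{M} = \mathcal{A} + \chi_\omega \hat{E}$ as in \eqref{eq_lim_feedback_backward}, and then to propagate this stability to the Riccati difference $\Delta(t) := \mathcal{E}(t)-\hat{E}$. For the first step, I would use $\hat{E}$ itself as a Lyapunov functional: the stationary Riccati identity $\mathcal{A}^*\hat{E}+\hat{E}\mathcal{A}+\hat{E}\chi_\omega\hat{E} = I$, combined with the self-adjointness of $\hat{E}$, gives along trajectories of $\hat{y}_t+\mathcal{M}\hat{y}=0$
\begin{equation*}
\frac{d}{dt}(\hat{E}\hat{y}(t),\hat{y}(t))_{L^2(\Omega)} = -\|\hat{y}(t)\|_{L^2(\Omega)}^2 - \|\hat{E}\hat{y}(t)\|_{L^2(\omega)}^2.
\end{equation*}
The uniform upper bound $(\hat{E}\phi,\phi)\le C\|\phi\|^2_{L^2(\Omega)}$ inherited from Lemma \ref{Lemma_exponencial} (passing to the limit in Lemma \ref{Lemma_3_exponencial}) then yields, by Gronwall, exponential decay of $(\hat{E}\hat{y},\hat{y})$, and, by integrating the identity on $[0,\infty)$, the uniform estimate $\int_0^\infty\|\hat{y}(t)\|^2_{L^2(\Omega)}\,dt \le C\|y_0\|_{L^2(\Omega)}^2$. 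Coupling this $L^2$-in-time control with the elementary energy inequality for the forward equation on intervals of unit length (whose constants depend only on $a_0$ and the $\mathscr{C}$-bounds of $(a,b,p)$), the Datko--Pazy principle delivers the uniform exponential decay $\|S(t)\|_{\mathcal{L}(L^2(\Omega))} \le Me^{-\mu t}$.

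Next, I would derive and exploit a Riccati-type equation for $\Delta$. Subtracting the evolution Riccati equation for $\mathcal{E}(t)$ from the stationary one for $\hat{E}$ and rewriting the linear part in terms of $\mathcal{M}$ and $\mathcal{M}^* = \mathcal{A}^*+\hat{E}\chi_\omega$, one obtains
\begin{equation*}
\Delta'(t) + \mathcal{M}^*\Delta(t) + \Delta(t)\mathcal{M} + \Delta(t)\chi_\omega\Delta(t) = 0,\qquad \Delta(0) = -\hat{E}.
\end{equation*}
Treating the quadratic term as a source, Duhamel's formula gives
\begin{equation*}
\Delta(t) = -S(t)^*\hat{E}\,S(t) \;-\; \int_0^t S(t-s)^*\,\Delta(s)\chi_\omega\Delta(s)\, S(t-s)\,ds,
\end{equation*}
so that, together with the uniform a priori bound $\|\Delta(s)\|_{\mathcal{L}(L^2(\Omega))} \le 2C$ coming from Lemma \ref{Lemma_exponencial},
\begin{equation*}
\|\Delta(t)\|_{\mathcal{L}(L^2(\Omega))} \le M^2\|\hat{E}\|\,e^{-2\mu t} + M^2\int_0^t e^{-2\mu(t-s)}\|\Delta(s)\|_{\mathcal{L}(L^2(\Omega))}^2\,ds.
\end{equation*}
A bootstrap argument that exploits the sign structure $\Delta\le 0$ and $\Delta\chi_\omega\Delta \ge 0$ (which follows from the monotonicity $\mathcal{E}(t)\uparrow\hat{E}$) together with the qualitative strong convergence of Lemma \ref{Lemma_3_exponencial} would then upgrade this integral inequality to the announced exponential decay $\|\Delta(t)\|_{\mathcal{L}(L^2(\Omega))}\le C_0 e^{-\mu t}$.

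The hardest part will be this bootstrap: the quadratic term $\Delta\chi_\omega\Delta$ prevents a direct Gronwall application from yielding the rate $\mu$, since a naive linearization gives a bound with effective rate $2\mu - 2CM^2$, which may be negative. Uniformity across the coefficient class, by contrast, is essentially automatic once one checks that every constant produced along the way ($M$, $\mu$, $\|\hat{E}\|$, and the Datko and Gronwall factors) depends only on the uniform controllability cost of Theorem \ref{teo_controlabilidad_uniforme_0}, on $a_0$, and on the $\mathscr{C}$-bounds of the coefficients. The exponential convergence of $\mathcal{E}(t)$ to $\hat{E}$ for a fixed coefficient tuple is classical (see \cite{longtime}); the novelty here is to trace the uniformity through each step.
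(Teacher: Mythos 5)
Your first step (uniform exponential stability of the closed-loop semigroup generated by $\mathcal{M}=\mathcal{A}+\chi_\omega\hat{E}$, obtained by using $\hat{E}$ as a Lyapunov functional and a Datko-type argument) coincides with the paper's Lemma \ref{corollary_1_exponencial_lemma}, and the uniformity bookkeeping you describe is the right one. The divergence, and the problem, is in the second step. The paper never writes a differential Riccati equation for $\Delta(t)=\mathcal{E}(t)-\hat{E}$; instead it compares the finite-horizon and infinite-horizon \emph{optimality systems} directly: subtracting the two coupled state--adjoint systems and integrating by parts yields
\begin{align*}
\int_0^T\|y-\hat{y}\|^2_{L^2(\Omega)}+\|\chi_\omega(\psi-\hat{\psi})\|^2_{L^2(\Omega)}\,dt\le\|y(T)-\hat{y}(T)\|_{L^2(\Omega)}\|\hat{\psi}(T)\|_{L^2(\Omega)},
\end{align*}
the right-hand side is absorbed via the energy estimate of Lemma \ref{lemma_1}, the factor $\|\hat{\psi}(T)\|=\|\hat{E}\hat{y}(T)\|\le Ce^{-\mu T}\|y_0\|$ supplies the exponential smallness from the semigroup decay, and the observability-type estimate of Lemma \ref{lemma_1} transfers this back to $\|\psi(0)-\hat{\psi}(0)\|=\|(\mathcal{E}(T)-\hat{E})y_0\|$. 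This route is linear throughout and the rate $\mu$ is inherited directly.

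Your route, by contrast, has a genuine gap exactly where you flag it. The Duhamel inequality
$\|\Delta(t)\|\le M^2\|\hat{E}\|e^{-2\mu t}+M^2\int_0^te^{-2\mu(t-s)}\|\Delta(s)\|^2\,ds$
cannot be closed by Gronwall using only the a priori bound $\|\Delta(s)\|\le 2C$: as you note, the effective rate $2\mu-2CM^2$ may be negative. The bootstrap would work if you knew $\|\Delta(s)\|_{\mathcal{L}(L^2(\Omega))}\le\varepsilon<\mu/M^2$ for all $s\ge s_0$ with $s_0$ \emph{uniform over the coefficient class}, but the input you invoke --- the strong convergence $\mathcal{E}(t)y_0\to\hat{E}y_0$ of Lemma \ref{Lemma_3_exponencial} --- gives neither operator-norm convergence (strong convergence of a monotone bounded family of self-adjoint operators does not imply norm convergence) nor any uniformity in $(a,b,p)$. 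Obtaining that uniform norm smallness is essentially the content of the proposition itself, so the argument as written is circular at this point. The sign structure $\Delta\le 0$, $\Delta\chi_\omega\Delta\ge 0$ is correctly identified but only yields $\|\Delta(t)\|\le\|\hat{E}\|$, not smallness. To repair the proof you would need an independent quantitative estimate on $\|\mathcal{E}(t)-\hat{E}\|$ at some uniform time --- and the most natural such estimate is precisely the duality comparison of the two optimality systems that the paper uses.
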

The proof of the Proposition \ref{corollary_1_exponencial} relies on the following lemma, whose proof can be found in Appendix \ref{corollary_1_exponencial_lemma_proof}. 
\begin{lemma}\label{corollary_1_exponencial_lemma}
Assume the coefficients $a,\,b$ and $p$ are bounded in $\mathscr{C}$ and that \eqref{elipticity_of_a} and \eqref{condition_kernel} are fulfilled. Then, there exist three positive constants $C_1,\, C_2$ and $ \mu$, independent of $T$ and the coefficients under consideration, such that 
\begin{align*}
   \frac{1}{2}\|\hat{E}y_0(\cdot)\|_{L^2(\Omega)}^2\leq(\hat{E}y_0(\cdot),y_0(\cdot))_{L^2(\Omega)}=\min_{ f\in L^{2}(0,\infty;\Omega)}J^{\infty,0}( f)\leq C_1\|y_0(\cdot)\|^2_{L^2(\Omega)},
\end{align*}
and
\begin{align*}
\| \hat{y}(\cdot,t)\|_{L^2(\Omega)} \leq  C_2e^{-\mu t} \|y_0(\cdot)\|_{L^2(\Omega)},
\end{align*}
where $ \hat{y}$ is the optimal state  of \eqref{eq_lim_feedback_backward}.
\end{lemma}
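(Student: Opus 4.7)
The plan is to prove the three bounds in sequence, extracting constants from the uniform null controllability (Theorem \ref{teo_controlabilidad_uniforme_0}) so that they depend only on $a_0$ and the uniform $\mathscr{C}$-bound on $(a,b,p)$.

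\textbf{Upper bound} $(\hat{E}y_0, y_0)\leq C_1\|y_0\|_{L^2(\Omega)}^2$. I would construct an admissible comparison control for \eqref{problem_Jinf}. Let $\tilde f \in L^2(0,1;\Omega)$ be the null control given by Theorem \ref{teo_controlabilidad_uniforme_0} with time horizon $1$, so that $\|\tilde f\|_{L^2(0,1;\Omega)}\leq C\|y_0\|_{L^2(\Omega)}$ uniformly in the class and the associated state $\tilde y$ vanishes at $t=1$. Extend $\tilde f$ by zero on $[1,\infty)$; then $\tilde y \equiv 0$ for $t\geq 1$. A standard energy estimate on $[0,1]$, uniform in $\mathscr{C}$, bounds $\int_0^1\|\tilde y\|_{L^2(\Omega)}^2\,dt$ by $\|y_0\|^2+\|\tilde f\|^2_{L^2(0,1;\Omega)}$, so $J^{\infty,0}(\tilde f)\leq C_1\|y_0\|^2$; the stated bound then follows from the identity $(\hat{E}y_0, y_0)=\min J^{\infty,0}$, which is the defining property of the stationary Riccati operator (cf.\ Lemma \ref{Lemma_3_exponencial}).

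\textbf{Lower bound} $\tfrac{1}{2}\|\hat{E}y_0\|_{L^2(\Omega)}^2\leq (\hat{E}y_0, y_0)$. I would rely on the fact that $\hat E$ is a bounded, positive, self-adjoint operator on $L^2(\Omega)$. Self-adjointness comes from the symmetric bilinear form representation
\begin{equation*}
    (\hat{E}y_0, z_0) = \tfrac{1}{2}\int_0^\infty\bigl[(\hat y(y_0),\hat y(z_0))_{L^2(\Omega)} + (\hat f(y_0),\hat f(z_0))_{L^2(\Omega)}\bigr]\,ds,
\end{equation*}
obtained by a duality computation between the state equation \eqref{rapidly_ocilation_general_heat} (with $T=\infty$) and its adjoint. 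Positivity and $\|\hat E\|_{\mathcal{L}(L^2(\Omega))}\leq C_1$ follow from the upper bound just proved. Then $\|\hat{E}y_0\|^2 = \|\hat E^{1/2}(\hat E^{1/2}y_0)\|^2\leq \|\hat E\|(\hat E y_0, y_0)\leq C_1(\hat{E}y_0, y_0)$, and the stated constant $\tfrac{1}{2}$ is obtained after absorbing $C_1$ into $C_1$ on the right.

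\textbf{Exponential decay.} I would use $V(t):=(\hat E \hat y(t),\hat y(t))$ as a Lyapunov functional. Dynamic programming, i.e.\ the fact that the restriction of $(\hat y,\hat f)$ to $[t,\infty)$ is optimal for the problem starting at $\hat y(t)$, gives $V(t) = \tfrac{1}{2}\int_t^\infty(\|\hat f\|^2+\|\hat y\|^2)\,ds$, and differentiation yields $V'(t) = -\tfrac{1}{2}(\|\hat f(t)\|^2+\|\hat y(t)\|^2)\leq -\tfrac{1}{2}\|\hat y(t)\|^2$. Combining with $V(t)\leq C_1\|\hat y(t)\|^2$ at each $t$ gives $V'(t)\leq -V(t)/(2C_1)$, so Gronwall yields $V(t)\leq V(0)e^{-t/(2C_1)}\leq C_1\|y_0\|^2 e^{-t/(2C_1)}$, and consequently $\int_t^\infty\|\hat y(s)\|_{L^2(\Omega)}^2\,ds$ decays exponentially in $t$.

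The main obstacle is converting this integrated decay into the pointwise bound required by the lemma, since $\hat E$ is not coercive from below and a reverse inequality $V(t)\geq c\|\hat y(t)\|^2$ is not available. I would close this gap through a uniform forward-in-time Gronwall estimate on the closed-loop equation \eqref{eq_lim_feedback_backward}: testing against $\hat y$, handling the drift and potential contributions via Young's inequality under the uniform $\mathscr{C}$-bound, and bounding the feedback term by $\|\hat E\|\leq C_1$, yields $\frac{d}{dt}\|\hat y(t)\|^2\leq K\|\hat y(t)\|^2$ with $K$ uniform in the class. Hence $\|\hat y(t)\|^2\leq e^K\|\hat y(s)\|^2$ for all $s\in[t-1,t]$, and integrating over $s$ gives $\|\hat y(t)\|^2\leq e^K\int_{t-1}^t\|\hat y(s)\|^2\,ds \leq \tilde C\|y_0\|^2 e^{-t/(2C_1)}$ for $t\geq 1$. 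The transient window $t\in[0,1]$ is absorbed into the final constant via the same Gronwall bound, giving the lemma with uniform $C_2,\mu>0$.
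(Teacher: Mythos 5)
Your proof of the upper bound via the two-step control (uniform null control on $(0,1)$ extended by zero) is exactly the paper's argument. For the exponential decay, however, you take a genuinely different route. The paper first establishes, via a compactness/translation (LaSalle-type) argument, that $\sup_{\|y_0\|\le 1}(\hat E\hat y(t),\hat y(t))\to 0$, and then deduces from $\|S(t)\|_{\mathcal{L}(L^2(\Omega))}\le C$ together with $\int_0^\infty\|S(t)y_0\|^2dt\le C\|y_0\|^2$ the Datko-type estimate $\|S(t)\|_{\mathcal{L}(L^2(\Omega))}\le C t^{-1/2}$, which forces $\|S(\hat t)\|<1$ at a uniform time $\hat t$ and hence exponential decay by the semigroup property. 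Your Lyapunov argument ($V'\le -\tfrac12\|\hat y\|^2\le -V/(2C_1)$ via dynamic programming, then conversion of the integrated decay into a pointwise bound by a forward Gronwall estimate on the closed-loop equation and averaging over a unit time window) is more elementary, avoids the compactness step entirely, and produces an explicit uniform rate $\mu\sim 1/C_1$; it is a valid alternative, and all the constants you use ($C_1$, $\|\hat E\|$, the Gronwall constant $K$) are uniform in the class, as required.

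The one step that does not work as written is the lower bound. From $\|\hat E y_0\|^2\le\|\hat E\|\,(\hat E y_0,y_0)\le C_1(\hat E y_0,y_0)$ you obtain $\tfrac{1}{C_1}\|\hat E y_0\|^2\le(\hat E y_0,y_0)$, and "absorbing $C_1$ into $C_1$ on the right" cannot upgrade the constant on the \emph{left} to $\tfrac12$: weakening the right-hand side of an inequality never strengthens the left-hand side, so your argument only yields the claimed inequality when $C_1\le 2$, which is not guaranteed. The paper gets the constant $\tfrac12$ by a different mechanism: testing the adjoint equation $-\hat\psi_t+\mathcal{A}^*\hat\psi=\hat y$ with $\hat\psi$ and using \eqref{elipticidad_A_star} gives the parabolic energy estimate $\|\hat\psi(\cdot,0)\|_{L^2(\Omega)}^2\le\int_0^\infty\|\hat y(\cdot,t)\|^2_{L^2(\Omega)}dt$, whence $(\hat E y_0,y_0)\ge\tfrac12\int_0^\infty\|\hat y\|^2dt\ge\tfrac12\|\hat\psi(\cdot,0)\|^2=\tfrac12\|\hat E y_0\|^2$. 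Your weaker bound $\|\hat E y_0\|^2\le C_1(\hat E y_0,y_0)$ would still suffice for the downstream uses of the lemma (only a uniform bound on $\|\hat E\|$ is needed there), so this is a flaw in the stated constant rather than a fatal gap, but as written the inequality with $\tfrac12$ is not proved.
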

Note that the last inequality guarantees that the semigroups generated by $\mathcal{M}$ and its adjoint $\mathcal{M}^*$ are uniformly exponentially stable.

\begin{proof}[\bf Proof of the Proposition \ref{corollary_1_exponencial}]
Let us start by considering the optimal pairs $( y,\psi)$ and $( \hat{y},\hat{\psi})$ of problems \eqref{problem_J0}, and \eqref{problem_Jinf}, respectively. Subtracting the equations of the optimal states and integrating on $(0, T)$, we have  
\begin{align}\label{eq_prop_proof_41_1}
\begin{split}
    \int_0^T\|  y(\cdot,t)- \hat{y}(\cdot,t)\|^2_{L^2(\Omega)}  +  \|\chi_\omega ( \psi(\cdot,t)-\hat{\psi}(\cdot,t))\|^2_{L^2(\Omega)}dt&= -\int_\Omega  ( y(x,T)- \hat{y}(\cdot,T)) (\hat{\psi}(\cdot,T)) dx\\
     &\leq \| y(x,T)- \hat{y}(\cdot,T)\|_{L^2(\Omega)}\|\hat{\psi}(\cdot,T)\|_{L^2(\Omega)}.
     \end{split}
\end{align}
Now, applying Lemma \ref{lemma_1} to the difference $ y(x,t)- \hat{y}(x,t)$, there exists $C>0$ independent of $T$ and the coefficients such that
\begin{align}\label{eq_prop_proof_41_2}
    \| y(\cdot,T)- \hat{y}(\cdot,T)\|^2_{L^2(\Omega)}\leq C\biggr(\int_0^T\|  y(\cdot,t)- \hat{y}(\cdot,t)\|^2_{L^2(\Omega)}  +  \|\chi_\omega ( \psi(\cdot,t)-\hat{\psi}(\cdot,t))\|^2_{L^2(\Omega)}dt\biggr).
\end{align}
Hence, combining \eqref{eq_prop_proof_41_1}-\eqref{eq_prop_proof_41_2} we get
\begin{align*}
     \int_0^T\|  y(\cdot,t)- \hat{y}(\cdot,t)\|^2_{L^2(\Omega)}  +  \|\chi_\omega ( \psi(\cdot,t)-\hat{\psi}(\cdot,t))\|^2_{L^2(\Omega)}dt\leq C\|\hat{\psi}(\cdot,T)\|_{L^2(\Omega)}^2.
\end{align*}
Since $\hat{\psi}(t)=\hat{E}  \hat{y}(t)$, applying Lemma \ref{lemma_1} for the difference $\psi(x,t)-\hat{\psi}(x,t)$, and Lemma \ref{corollary_1_exponencial_lemma}, we deduce that 
\begin{align}\label{eq_prop_proof_41_3}
\begin{split}
     \|\psi(\cdot,0)-\hat{\psi}(\cdot,0)\|^2_{L^2(\Omega)}&\leq C\biggr(\int_0^T\|  y(\cdot,t)- \hat{y}(\cdot,t)\|^2_{L^2(\Omega)}  +  \|\chi_\omega ( \psi(\cdot,t)-\hat{\psi}(\cdot,t))\|^2_{L^2(\Omega)}dt\biggr)\\
     & \leq C\|\hat{\psi}(\cdot,T)\|_{L^2(\Omega)}^2\leq 2CC_1C_2^2 e^{-2\mu T}\|y_0\|^2_{L^2(\Omega)}.
     \end{split}
\end{align}
Finally, we conclude the proof by using the definitions of $\mathcal{E}$ and $\hat{E}$ on the left-hand side of \eqref{eq_prop_proof_41_3}.
\end{proof}

Returning to the original problem \eqref{evolutive_control_problem} with the original target $y_d$, not necessarily the null one, the following result enables the optimal control to be characterized in a feedback form.
\begin{corollary}\label{corollary_2_exponencial}
Assume the coefficients $a,\,b$ and $p$ are bounded in $\mathscr{C}$ and that \eqref{elipticity_of_a} and \eqref{condition_kernel} are fulfilled. The optimal control $ f$ of \eqref{evolutive_control_problem} is given by the feedback control law
\begin{align}\label{affine_feedback_law}
     f(x,t)= \overline{f}(x)-\chi_\omega\left[\mathcal{E}(T-t)\left( y(x,t)-\overline{y}(x)\right)+h(x,t)\right],
\end{align}
where $h$ satisfies the following equation
\begin{align}\label{equation_h}
    \begin{cases}
        -h_{t}+\left(\mathcal{A}^* +\chi_\omega\mathcal{E}(T-t)\right) h=0  \quad & (x,t)\in\Omega\times(0,T) \\
        h(x,t)=0 &(x,t)\in\partial\Omega\times(0,T),\\
        h(x,T)=-\overline{\psi}(x)&x\in \Omega.
    \end{cases}
\end{align}
Here $\overline{\psi}(x)$ is the stationary adjoint state.
\end{corollary}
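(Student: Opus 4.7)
The plan is to apply the affine Riccati decoupling strategy of \cite{longtime} to the optimality system of \eqref{evolutive_control_problem}, after subtracting off the stationary pair. I begin by introducing the shifted variables $z := y - \overline{y}$ and $q := \psi - \overline{\psi}$. Subtracting \eqref{stationry_system} and \eqref{stationry_adj_system} from \eqref{rapidly_ocilation_general_heat} and \eqref{adj_var_op}, using that $\overline{y}$ and $\overline{\psi}$ are time-independent and that the target $y_d$ cancels in the adjoint equation, I obtain the forward-backward system
\begin{equation*}
    z_t + \mathcal{A} z = -\chi_\omega q, \qquad -q_t + \mathcal{A}^* q = z,
\end{equation*}
with $z(0) = y_0 - \overline{y}$ and $q(T) = -\overline{\psi}$. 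Combining $f = -\chi_\omega \psi$ and $\overline{f} = -\chi_\omega \overline{\psi}$ yields the basic identity $f - \overline{f} = -\chi_\omega q$, so the whole argument reduces to expressing $q$ as a closed-loop functional of $z$ plus an affine correction.

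The key step is the Riccati-type ansatz
\begin{equation*}
    q(x,t) = \mathcal{E}(T-t)\, z(x,t) + h(x,t),
\end{equation*}
from which I would derive the equation for $h$ by computing $q_t$ in two ways: once by differentiating the ansatz in $t$ (substituting $z_t = -\mathcal{A} z - \chi_\omega q$), and once from $q_t = \mathcal{A}^* q - z$. Equating the two expressions and collecting the terms that are linear in $z$, the coefficient vanishes by virtue of the backward differential Riccati equation
\begin{equation*}
    -\dot{\mathcal{E}}(s) = \mathcal{A}^* \mathcal{E}(s) + \mathcal{E}(s)\mathcal{A} + \mathcal{E}(s)\chi_\omega \mathcal{E}(s) - I, \qquad \mathcal{E}(0) = 0,
\end{equation*}
which characterizes the value function of \eqref{problem_J0} through the analysis in Lemma \ref{Lemma_exponencial} and Lemma \ref{Lemma_3_exponencial}. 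The remaining terms yield precisely the affine backward equation \eqref{equation_h} for $h$; the terminal condition is immediate since $\mathcal{E}(0) = 0$ gives $h(T) = q(T) - \mathcal{E}(0)\, z(T) = -\overline{\psi}$.

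Once \eqref{equation_h} is identified, substituting $q = \mathcal{E}(T-t)(y - \overline{y}) + h$ into $f = \overline{f} - \chi_\omega q$ yields the feedback formula \eqref{affine_feedback_law}. The main obstacle I anticipate is making rigorous the operator-valued Riccati equation above, in particular giving a precise meaning to the nonlinear term $\mathcal{E}(s)\chi_\omega \mathcal{E}(s)$, since $\mathcal{E}(s)$ is produced so far only as a bounded self-adjoint operator on $L^2(\Omega)$ and not as the output of a classical differential Riccati theory. A clean way to bypass this is to reverse the logic: define $h$ as the unique solution of \eqref{equation_h}, set $q := \mathcal{E}(T-t)z + h$ where $z$ solves the closed-loop state equation driven by the candidate feedback \eqref{affine_feedback_law}, and verify by a direct computation (using only the backward-in-time characterization $\psi(\cdot,t) = \mathcal{E}(T-t)y(\cdot,t)$ established in Lemma \ref{Lemma_3_exponencial}) that $(z,q)$ satisfies the forward-backward system with the correct initial and terminal data. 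Uniqueness of the optimality system of \eqref{evolutive_control_problem} then forces $(z,q) = (y - \overline{y}, \psi - \overline{\psi})$ and hence \eqref{affine_feedback_law}.
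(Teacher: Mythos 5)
Your decomposition ($z=y-\overline{y}$, $q=\psi-\overline{\psi}$, the shifted forward--backward system, and the ansatz $q=\mathcal{E}(T-t)z+h$) is exactly the one the paper uses, and the identity $f-\overline{f}=-\chi_\omega q$ closes the argument in the same way. Where you diverge is in how the ansatz is justified: you propose to differentiate it and invoke the operator differential Riccati equation, which, as you yourself note, is never established in the paper and would require proving time-differentiability of $t\mapsto\mathcal{E}(t)$ and making sense of the quadratic term --- nontrivial extra work given that $\mathcal{E}(T)$ is constructed here only through the optimality system and its variational characterization. The paper avoids this entirely: it fixes an arbitrary $\eta\in L^2(\Omega)$, introduces the auxiliary optimality pair $(u,q)$ on $(t,T)$ with $u(\cdot,t)=\eta$, $q(\cdot,T)=0$ (so that $q(s)=\mathcal{E}(T-s)u(s)$ by definition), and derives the weak identity $(n(t),\eta)=(m(t),q(t))+(h(t),\eta)$ by three integrations by parts, with no differentiation of $\mathcal{E}$. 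This duality computation is precisely the ``direct computation'' your fallback plan relies on but does not spell out; so your second route is sound in principle, but its only nontrivial step is the content of the paper's proof, and you should carry it out as a tested/weak identity rather than pointwise in time. One further caution: if you do carry out your first route and collect terms, the $h$-equation you obtain is $-h_t+\mathcal{A}^*h+\mathcal{E}(T-t)\chi_\omega h=0$ (the term $\mathcal{E}\chi_\omega h$, i.e.\ the adjoint ordering of what is written in \eqref{equation_h}); the same ordering is what makes the cancellation $\int_t^T(h,\chi_\omega q)\,ds-\int_t^T(\mathcal{E}(T-s)\chi_\omega h,u)\,ds=0$ work in the duality argument, since $\mathcal{E}$ and $\chi_\omega$ do not commute. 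You would need to reconcile this with the stated form of \eqref{equation_h} rather than assume the two orderings agree.
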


\begin{proof}[\bf Proof of the Corollary  \ref{corollary_2_exponencial}]
Recall that when considering $\eta \in L^2(\Omega)$, the function $q$ defined by $\mathcal{E}(T-t)\eta=q(t)$ satisfies the system
\begin{align*}
    \begin{cases}
    u_{t} +\mathcal{A} u= -\chi_\omega q \quad & (x,t)\in\Omega\times(t,T),\\
    -q_{t} +\mathcal{A}^* q= u  \quad & (x,t)\in\Omega\times(t,T),\\
    u(x,t)=q(x,t)=0 &(x,t)\in\partial\Omega\times(t,T),\\
    u(x,t)=\eta(x),\, q(x,T)= 0 &x\in \Omega.
    \end{cases}
\end{align*}
Then, the new states $m= y-\overline{y}$
and $n=\psi-\overline{\psi}$ solve
\begin{align}\label{equationn_m}
    \begin{cases}
    m_{t} +\mathcal{A} m= -\chi_\omega n \quad & (x,t)\in\Omega\times(0,T),\\
    -n_{t} +\mathcal{A}^* n= m  \quad & (x,t)\in\Omega\times(0,T),\\
    m(x,t)=n(x,t)=0 &(x,t)\in\partial\Omega\times(0,T),\\
    m(x,0)=y_0(x)-\overline{y}(x),\, n(x,T)= -\overline{\psi}(x) &x\in \Omega.
    \end{cases}
\end{align}
Now, using the equation \eqref{equation_h} and the definition of $\mathcal{E}$ for every $\eta\in L^2(\Omega)$, the following relation holds:
\begin{align}\label{variational_relation}
    (n(\cdot,t),\eta(\cdot))_{L^2(\Omega)}=(m(\cdot,t),q(\cdot,t))_{L^2(\Omega)}+(h(\cdot,t),\eta(\cdot))_{L^2(\Omega)}.
\end{align}
Equation \eqref{variational_relation} can be obtained by multiplying the first equation of \eqref{equationn_m} by $u$, integrating over $(t,T)\times \Omega$, and integrating by parts in space and time:
\begin{multline}\label{eq_1}
    (n(\cdot,t),\eta(\cdot))_{L^2(\Omega)}\\=-(\overline{\psi}(\cdot),u(\cdot,T))_{L^2(\Omega)}+\int_t^T(n(\cdot,s),\chi_\omega q(\cdot,s))_{L^2(\Omega)}ds
    +\int_t^T(m(\cdot,s),u(\cdot,s))_{L^2(\Omega)}ds.
\end{multline}
Multiplying by $u$ the first equation in \eqref{equation_h}, integrating over $(t,T)\times \Omega$, and integrating by part in space and time, we have  
\begin{multline*}
    (h(\cdot,t),\eta(\cdot))_{L^2(\Omega)}\\=-(\overline{\psi}(\cdot),u(\cdot,T))_{L^2(\Omega)}+\int_t^T(h(\cdot,s),\chi_\omega q(\cdot,s))_{L^2(\Omega)}ds-\int_t^T(\chi_{\omega} \mathcal{E}(T-s)h(\cdot,s),u(\cdot,s))_{L^2(\Omega)}ds.
\end{multline*}
Taking into account that $\mathcal{E}(T-s)u(s)=q(s)$ for every $s\in(t,T)$, the above equality reduces to
\begin{align}\label{eq_2}
    (h(\cdot,t),\eta(\cdot))_{L^2(\Omega)}=-(\overline{\psi}(\cdot),u(\cdot,T))_{L^2(\Omega)}.
\end{align}
Multiplying by $q$ the first equation of \eqref{equationn_m}, integrating over $(t,T)\times \Omega$, and integrating by parts we get
\begin{align}\label{eq_3}
    (m(\cdot,t),q(\cdot,t))_{L^2(\Omega)}=\int_t^T(m(\cdot,s),u(\cdot))_{L^2(\Omega)}ds+\int_t^T(n(\cdot,s),\chi_{\omega}q(\cdot,s))_{L^2(\Omega)}ds.
\end{align}
Thus, combining equations \eqref{eq_1}-\eqref{eq_3}, we conclude \eqref{variational_relation}. Returning to the original variables, we have 
\begin{align}\label{eq_4}
     (\psi(\cdot,t)-\overline{\psi}(\cdot),\eta(\cdot))_{L^2(\Omega)}=( y(\cdot,t)-\overline{y}(\cdot),\mathcal{E}(T-t)\eta)_{L^2(\Omega)}+(h(\cdot,t),\eta(\cdot))_{L^2(\Omega)}.
\end{align}
Since $\eta$ is arbitrary, from \eqref{eq_4} we obtain
 \begin{align*}
     \int_\Omega  \biggr(\psi(x,t)-\overline{\psi}(x)-\mathcal{E}(T-t)( y(x,t)-\overline{y}(x))-h(x,t)\biggr)\eta(x) dx=0, \quad \forall \eta \in L^2(\Omega).
 \end{align*}
This concludes the proof.
\end{proof}

We are now ready to prove the main theorem.

\begin{proof}[ \bf Proof of Theorem \ref{TH_TURNPIKE_EXP}]
Since $\mathcal{M}^*=(a^* +\chi_\omega \hat{E})$, we can write  equation \eqref{equation_h} as
\begin{align}\label{equation_h2}
    \begin{cases}
        -h_{t}+\mathcal{M}^*h+(\mathcal{E}(T-t)-\hat{E}) \chi_\omega h=0  \quad & (x,t)\in\Omega\times(0,T), \\
        h(x,t)=0 &(x,t)\in\partial\Omega\times(0,T),\\
        h(x,T)=-\overline{\psi}&x\in \Omega.
    \end{cases}
\end{align}
Denote by $S(t)$ the $C^0-$semigroup generated by $\mathcal{M}$, and $S^*(t)\in \mathcal{L}(L^2(\Omega))$ the adjoint of $S(t)$. Since $L^2(\Omega)$ is a Hilbert space, then $\mathcal{M}^*$ is the generator of the $C^0-$semigroup $S^*(t)$. Thus, the solution of \eqref{equation_h2} is given by
\begin{align*}
    h(x,t)=-S^*(T-t)\overline{\psi}(x) + \int_t^T S^*(s-t)\left((\mathcal{E}(T-s)-\hat{E})\chi_\omega h(s)\right)ds.
\end{align*}
Using Proposition \ref{corollary_1_exponencial}, Corollary \ref{corollary_2_exponencial}, and Lemma \ref{lemma_1} we obtain
\begin{align*}
    \|h(t)\|_{L^2(\Omega)}&\leq C_2\| y_d\|_{L^2(\Omega)}e^{-\mu(T-t)} + C_0C_2\int_t^T e^{-\mu(s-t)} e^{-\mu(T-s)}\|h(s)\|_{L^2(\Omega)}ds\\
   &\leq C_2\max\{1,C_0\} \left(\|y_d\|_{L^2(\Omega)}e^{-\mu(T-t)} + e^{-\mu T}\int_t^T  e^{\mu(t+s)}\|h(s)\|_{L^2(\Omega)}ds\right).
\end{align*}
Applying Gronwall’s lemma, we have 
\begin{align}\label{stability_h}
\begin{split}
       \|h(t)\|_{L^2(\Omega)}&\leq  C_2\max\{1,C_0\}\|y_d\|_{L^2(\Omega)}e^{-\mu(T-t)} \exp\left(C_2\max\{1,C_0\}\int_t^T e^{-\mu T} e^{\mu(t+s)} ds\right)\\
     &\leq C_3\|y_d\|_{L^2(\Omega)}e^{-\mu(T-t)},
\end{split}
\end{align}
for every $t\in [0,T]$, where 
\begin{align}\label{constant_c3}
C_3=C_2\max\{1,C_0\}.    
\end{align}
On the other hand, we can observe that $z= y-\overline{y}$ satisfies
\begin{align}\label{z_system_proof_main_theorem}
     \begin{cases}
    z_t + \mathcal{M} z=\chi_\omega \left((\hat{E}-\mathcal{E}(T-t))z-h\right)\quad & (x,t)\in\Omega\times(0,T),\\
    z(x,t)=0 &(x,t)\in\partial\Omega\times(0,T),\\
    z(x,0)=y_0(x)-\overline{y}(x)&x\in \Omega.
    \end{cases}
\end{align}
Then, the solution of \eqref{z_system_proof_main_theorem} is given by
\begin{align*}
    z(x,t)=S(t)(y_0(x)-\overline{y}) +\int_0^t S(t-s)\chi_\omega \left((\hat{E}-\mathcal{E}(T-t))z(x,s)-h(x,s)\right) ds.
\end{align*}
 Now using estimate \eqref{stability_h} and Lemma \ref{lemma_1}, we obtain
\begin{align*}
    \|z(t)\|_{L^2(\Omega)}&\leq\begin{multlined}[t][9cm]C_2(\|y_0\|_{L^2(\Omega)}+\hat{C}\|y_d\|_{L^2(\Omega)})e^{-\mu t} +C_0C_2\int_0^t e^{-\mu(t-s)}e^{-\mu(T-s)}\|z(s)\|_{L^2(\Omega)} ds\\ + C_2C_3\int_0^t e^{-\mu(t-s)}e^{-\mu(T-s)}ds\end{multlined} \\
    &\leq\begin{multlined}[t][12cm]C_2\left((\|y_0\|_{L^2(\Omega)}+\hat{C}\|y_d\|_{L^2(\Omega)})e^{-\mu t} + C_3\|y_d\|_{L^2(\Omega)}e^{-\mu (T-t)}\right)\\
    +C_0C_2e^{-\mu(T-t)}\int_0^t e^{-\mu(t-s)}e^{-\mu(T-s)}\|z(s)\|_{L^2(\Omega)} ds
    \end{multlined}\\
    &\leq C_4\left(\left((\|y_0\|_{L^2(\Omega)}+\|y_d\|_{L^2(\Omega)})e^{-\mu t} + \|y_d\|_{L^2(\Omega)}e^{-\mu (T-t)}\right) +\int_0^t e^{- \mu (t-s)}\|z(s)\|_{L^2(\Omega)} ds\right),
\end{align*}
where 
\begin{align}\label{constant_c4}
    C_4=C_2\max\{1,\hat{C},C_0,C_3\}.
\end{align}
Applying Gronwall's lemma again, we deduce
\begin{align*}
 \| y(\cdot,t)-\overline{y}(\cdot)\|_{L^2(\Omega)} &\leq 
 \begin{multlined}[t][11cm]
C_4\biggr((\|y_0\|_{L^2(\Omega)}+\|y_d\|_{L^2(\Omega)})e^{-\mu t} + \|y_d\|_{L^2(\Omega)}e^{-\mu (T-t)}\biggr)
 \exp\left(C_4\int_0^t e^{-\mu(t-s)}ds\right)
\end{multlined}\\
 &\leq C_4\left((\|y_0\|_{L^2(\Omega)}+\|y_d\|_{L^2(\Omega)})e^{-\mu t} + \|y_d\|_{L^2(\Omega)}e^{-\mu (T-t)}\right).
\end{align*}
Finally, using the fact that the optimal control $ f$ of \eqref{evolutive_control_problem} is given by the affine feedback law \eqref{affine_feedback_law} and  applying Proposition \ref{corollary_1_exponencial}, we conclude 
\begin{align*}
    \| f(\cdot,t)- \overline{f}(\cdot)\|_{L^2(\Omega)}&\leq (\|\mathcal{E}(T-t)\|_{\mathcal{L}(L^2(\Omega))}\| y(\cdot,t)-\overline{y}(\cdot)\|_{L^2(\Omega)}+\|h(\cdot,t)\|_{L^2(\Omega)})\\
    &\leq C_5\left((\|y_0\|_{L^2(\Omega)}+\|y_d\|_{L^2(\Omega)})e^{-\mu t} + \|y_d\|_{L^2(\Omega)}e^{-\mu (T-t)}\right),
\end{align*}
for every $t\in (0,T)$, where 
\begin{align*}
C_5=\max\{(C_0+(2C_1)^{1/2}\|y_0\|_{L^2(\Omega)})C_4,C_3\}.
\end{align*}
\end{proof}

\begin{remark}\label{explicit_constant_turnpike}
\begin{enumerate}
    \item The constants $C_0,\,C_1,\,C_2$ and $\hat{C}$ in the previous proof are given by Proposition \ref{corollary_1_exponencial}, Lemma \ref{corollary_1_exponencial_lemma} and Lemma \ref{lemma_1}, respectively. The  constant $C$ of  \eqref{ineq_exponential_turnpike} is given by $C=2\max\{C_4,C_5\},$ with $C_3$ and $C_4$ given by \eqref{constant_c3} and \eqref{constant_c4}, respectively. These constants are independent of $T$ and the coefficients $(a,b,p)$ in the class considered.

    \item In the previous lemmas, propositions, and theorems, we assumed that the coefficients $(a, b, p)$ are bounded in $\mathscr{C}$. However, these uniform bounds on the coefficients are only needed to ensure the uniform null control property. This observation directly implies Theorem \ref{one_dimensional_UT}.
\end{enumerate}

\end{remark}

\section{Numerical simulations}\label{numerical}
In this section, we present some numerical experiments to confirm the uniform turnpike property in the context of homogenization. Let us fix the time horizon $T=50$ and the $1-d$ domain $\Omega=(0,1)$. Consider the optimal control problem
\begin{align*}
    \min_{ f^{\varepsilon}\in L^{2}(0,T;(0,1))}\biggr\{J_\varepsilon^T( f)=\frac{1}{2}\int_0^T \| f^{\varepsilon}\|_{L^2(0,1)}^2 dt +\| y^{\varepsilon}(\cdot,t)-1\|^2_{L^2(0,1)}dt\biggr\},
\end{align*}
where $y^{\varepsilon}$ is the solution of \eqref{rapidly_heat} with $a(\cdot)\in L^\infty(\R)$ a periodic function given by \eqref{homogenizated_a} satisfying \eqref{bound_a_homo}, and $b\equiv p\equiv0$.

On the other hand, consider the corresponding stationary optimization problem
\begin{align*}
    \min_{\overline{f}^{\varepsilon}\in L^{2}(0,1)}\biggr\{ J^{s}(\overline{f}^{\varepsilon})=\frac{1}{2}\biggr(\|\overline{f}^{\varepsilon}(\cdot)\|^{2}_{L^{2}(0,1)}+\|\overline{y}^{\varepsilon}(\cdot)-1\|^{2}_{L^{2}(0,1)}\biggr)\biggr\},
\end{align*}
where $\overline{y}^{\varepsilon}$ denotes the solution of the rsteady state system \eqref{stationry_system_2}. In this setting, the uniform turnpike property, i.e. Corollary \ref{corollary_uniform_epsilon}, holds.

Consider the particular case
\begin{align*}
    a\left(\frac{x}{\varepsilon}\right)= \sin^2\left(\frac{x\pi}{\varepsilon}\right)+0.5,\quad y_0(x)=x(x-1),
\end{align*}
modelling a heterogeneous material, where the conductivity coefficient oscillates periodically between two different constant ones ($=0.5$ and $1.5$). As the frequency of oscillation increases, materials mix leading to an homogenized homogeneous one in the limit.

In our numerical simulations the semi-discrete problem is dealt with the FEniCS library in Python \cite{MR3618064}, a high-level interface based on finite elements. The backward Euler discretization in time with 168 elements, and the spatial discretization with 421 elements was implemented, leveraging the standard Lagrange family of elements. We then solved the optimal control problem with Dolfin-adjoint \cite{Mitusch2019}. 
This procedure was performed for the various values of the parameter $\varepsilon\in (0.005,1)$. This is illustrated in Figure \eqref{fig1} and \eqref{fig2}.

\begin{figure}
\centering
\subfloat[Optimal states with $\varepsilon=1$ and $\varepsilon=0.5$, respectively. ]{
\includegraphics[width=0.45\textwidth]{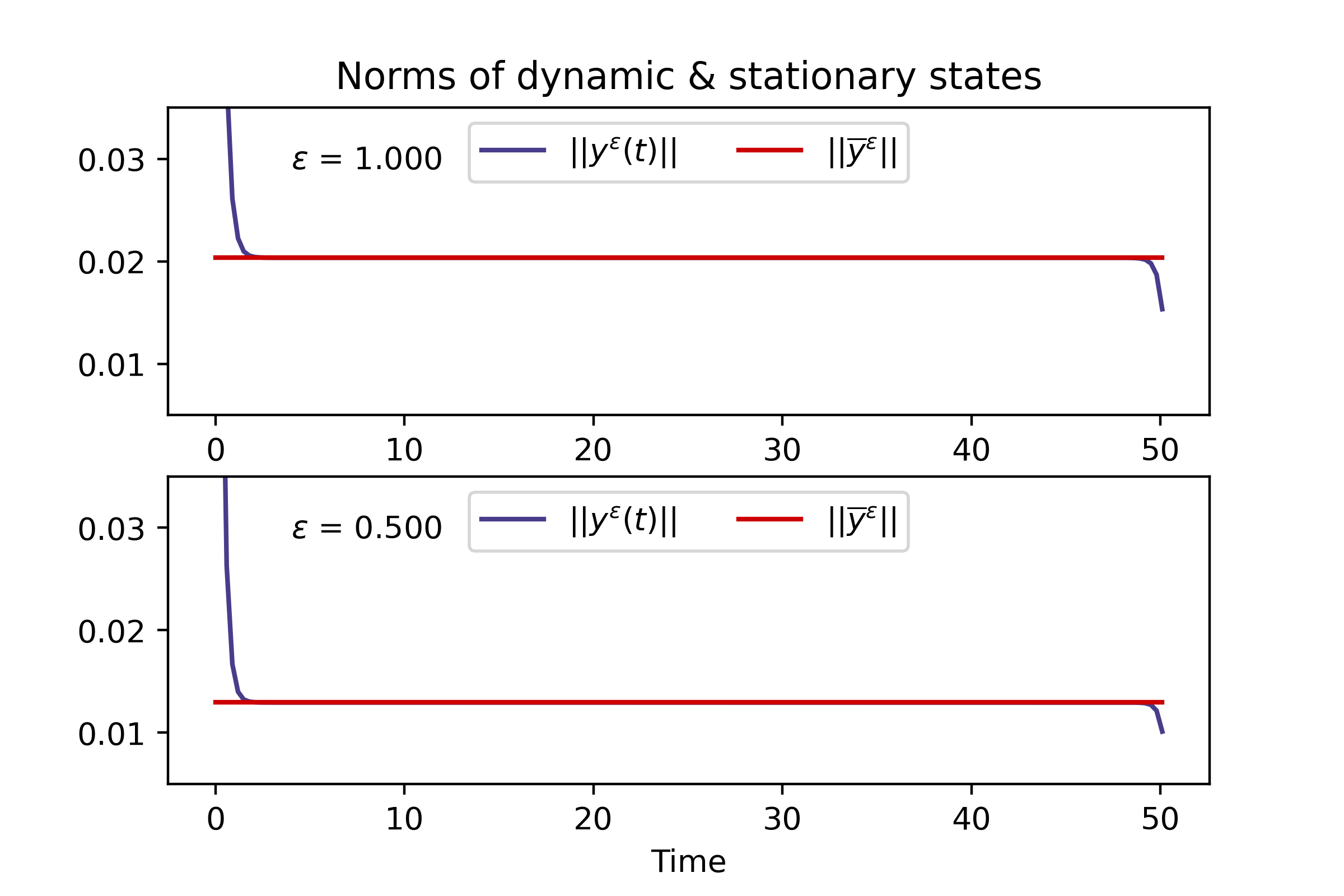}}
\subfloat[Optimal states with $\varepsilon=0.01$ and $\varepsilon=0.005$, respectively.]{
\includegraphics[width=0.45\textwidth]{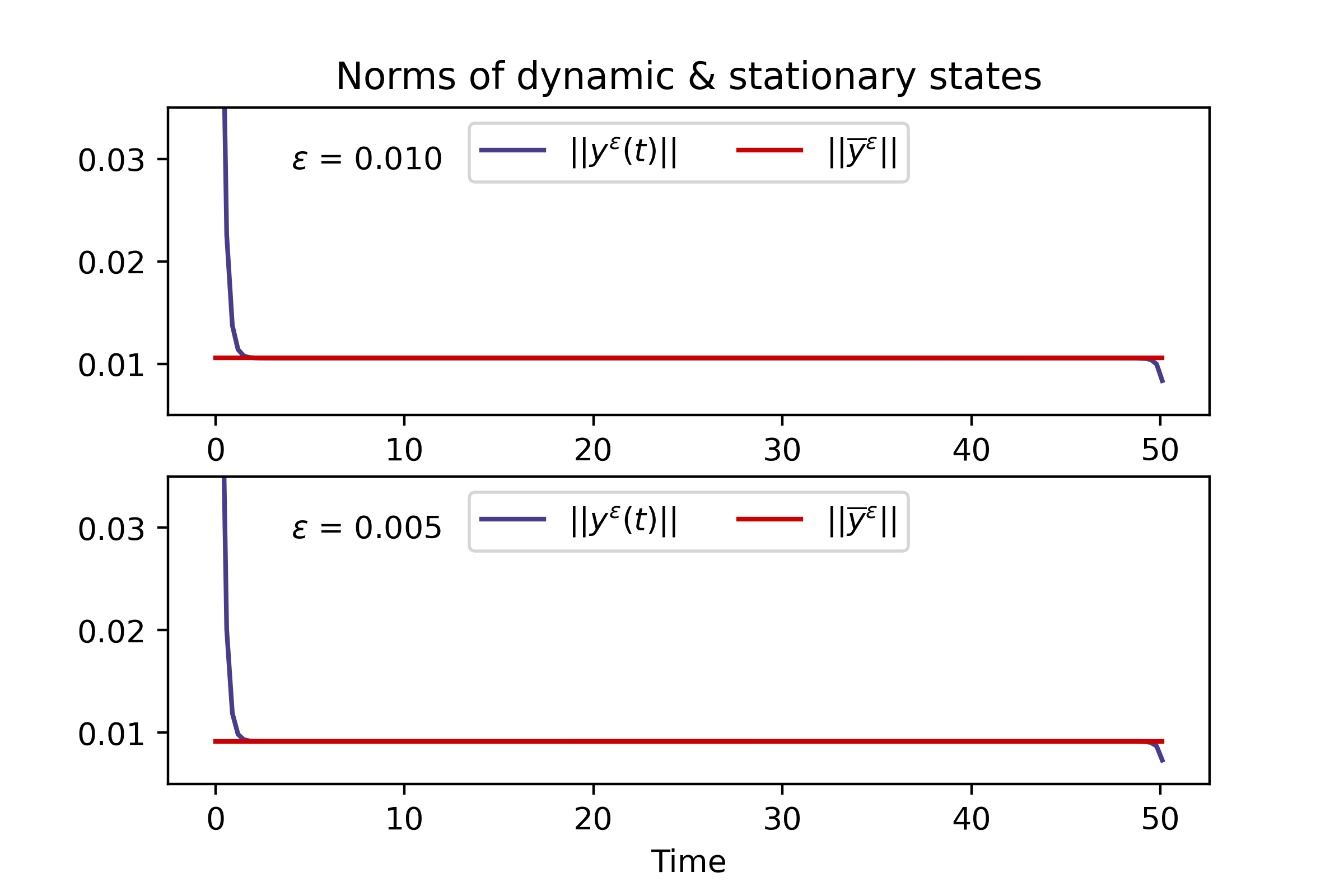}}\caption{Norm of evolutive and stationary states for different values of $\varepsilon$}\label{fig1}
\end{figure}

In Figure \ref{fig1}, the turnpike property is validated for a range of $\varepsilon$ values. Figure \ref{fig2} further illustrates that, for multiple $\varepsilon>0$ values, the uniform exponential turnpike \eqref{ineq_exponential_turnpike} is maintained with fixed constants $C,\,\mu>0$. These constants were obtained experimentally in our numerical tests.
\begin{figure}
    \centering
    \includegraphics[width=0.85\textwidth]{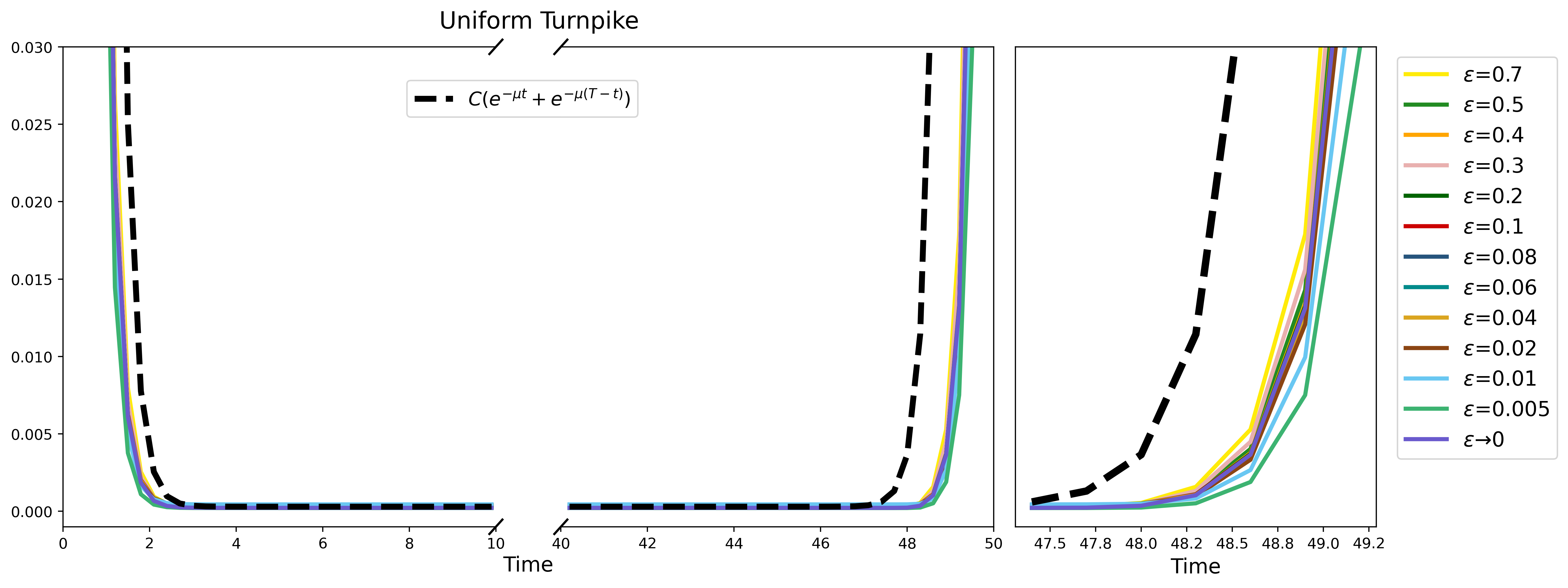}
    \caption{Left: Illustration of the uniform exponential turnpike property for different values of $\varepsilon$ with fixed constants $C=10$ and $\mu=4$. The dashed black line represents the turnpike uniform bound. The colored lines beneath this black line correspond to the quantity $\|y^\varepsilon(t)-\overline{y}^\varepsilon\|_{L^2(\Omega)}+\|f^\varepsilon(t)-\overline{f}^\varepsilon\|_{L^2(\Omega)}$, with different colors representing various values of $\varepsilon$. Right: A zoomed-in view of the uniform turnpike property at the end of the interval $(0,50)$.}\label{fig2}
\end{figure}

Let us analyze the homogenization singular limit problem, when $\varepsilon\to 0$. We know that the homogenized coefficient of  \eqref{lim_system} and  \eqref{lim_system_stationary}, in this $1-d$ example, is given by
\begin{align*}
    a_h= \left(\frac{1}{|\Omega|}\int_{\Omega} \frac{1}{a(x)}dx\right)^{-1}=\left(\int_0^1\frac{1}{sin^2\left(\frac{x\pi}{\varepsilon}\right)+0.5}dx\right)^{-1} .
\end{align*}
In this case, $a_h \approx 0.86603$. The limit optimal control problems \eqref{evolutive_control_problem} and \eqref{stationary_var_problem} are solved, subject to the homogenized equations \eqref{lim_system} and \eqref{lim_system_stationary} with the coefficient $a_h$. The comparison of the limit model with different values of $\varepsilon>0$ is illustrated in Figure \ref{fig2}. 

From Figure  \ref{fig2}, we can also observe that the turnpike property still holds for the homogenized system with the same constants $C$ and $\mu$.

Finally, observe that  the uniform turnpike property and Lemma \ref{lemma_1}, guarantee that
\begin{align*}
    \| y(\cdot,t)\|_{L^2(\Omega)}\leq C\left(\|y_0\|_{L^2(\Omega)}+\|y_d\|_{L^2(\Omega)}\right)(e^{-\mu t}+e^{-\mu(T-t)} + \|y_d\|_{L^2(\Omega)}).
\end{align*}
This shows that all the trajectories are contained in a tubular neighborhood, defined by the turnpike constants and the norm of the target. This can be noticed in Figure \ref{fig4}.

\begin{figure}
\centering
\includegraphics[width=0.65\textwidth]{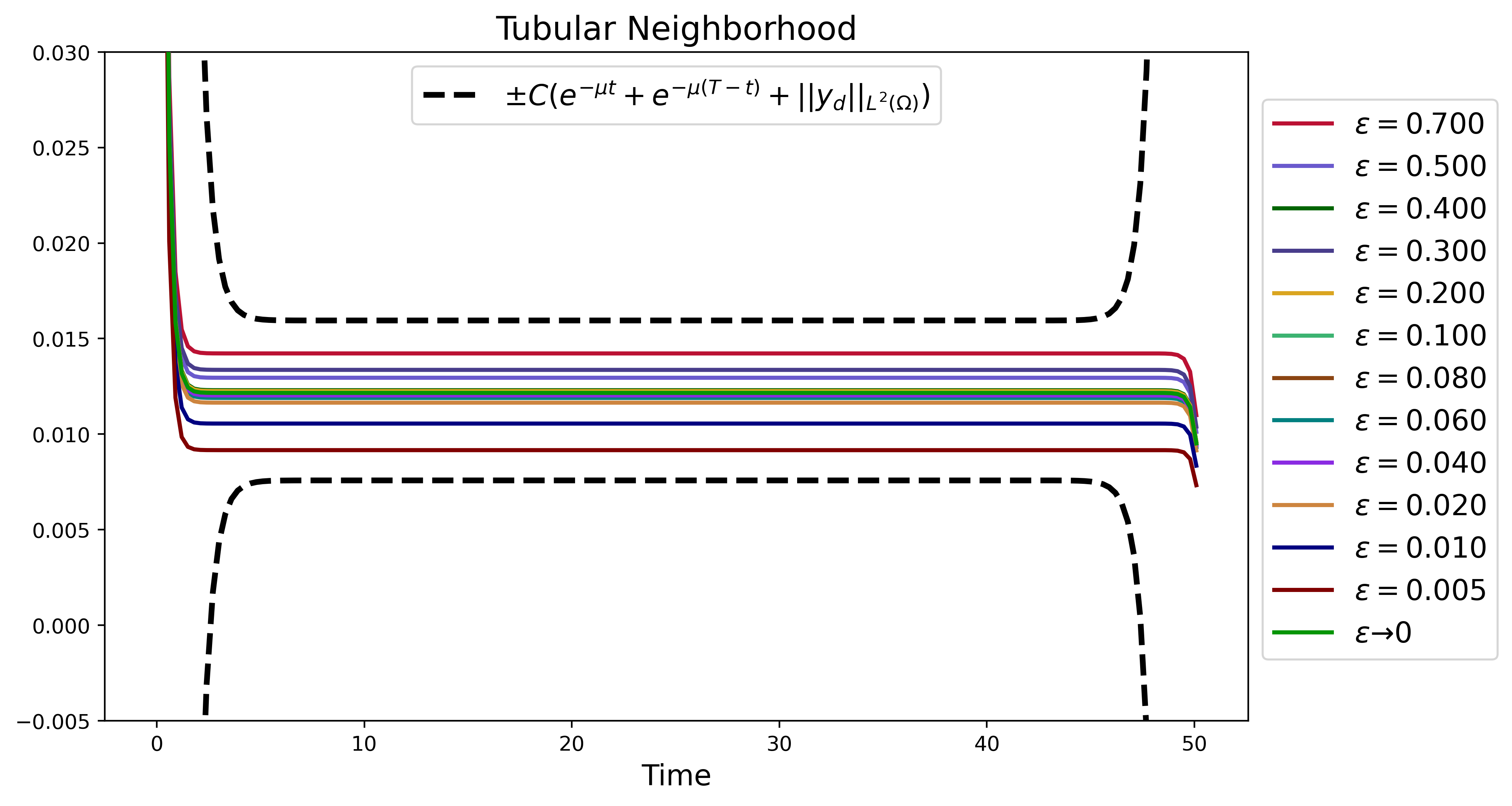}
\caption{Tubular neighborhood, defined by the uniform turnpike constants, containing all the optimal states. Lines of different colors represent different norms $\|y^\varepsilon(t)\|_{L^2(\Omega)}$ of the optimal trajectories, corresponding to various values of $\varepsilon$.}\label{fig4}
\end{figure}

\section{Further comments and open problems}\label{further_comentaries}

Summarizing, in this paper, we show that the uniform turnpike property is a consequence of the uniform null controllability property. Although the uniform turnpike property has been proved for a particular parabolic equation,  this general principle remains valid for other models, such as finite-dimensional ones or wave-type equations, as long as the uniform null control property holds.

In the following, we indicate some interesting open questions related to our analysis.

\begin{enumerate}

\item The analysis of the singular homogenization limit of the turnpike (Corollary \ref{turnpike_limit_states}) was performed in $1-d$. This limitation is due to the lack of uniform controllability results in higher space dimensions without $W^{1,\infty}-$bounds on the coefficients of the principal part. The problem of null-controllability and homogenization of the multi-dimensional heat equation is widely open. So is the case for the uniform turnpike property.

\item It would be interesting to investigate similar questions for the singular limit of the equation
\begin{align}\label{lopez_waves}
     \begin{cases}
   \varepsilon y_{tt}-\Delta y+ y_t = \chi_\omega f &\text{ in }\Omega\times(0,T),\\
     y=0 &\text{ on }\partial\Omega\times(0,T),\\
     y(x,0)=y_0(x),\quad  y_t(x,0)=y_1(x)&\text{ in }\Omega.
    \end{cases}
\end{align}
Observe that the behavior of this equation varies from hyperbolic to parabolic as $\varepsilon \to 0$. The uniform controllability of this model in the singular limit regime was proved by Lopez, Zhang, and Zuazua in \cite{MR1782102}. This result can be used directly to prove the uniform turnpike property, following the same methodology of this article.

Similar questions can be more easily handled when the perturbation enters in lower order terms, such as in
\begin{align*}
  \begin{cases}
    y_{tt}-\Delta y + \varepsilon y_t= f &\text{ in }\Omega\times(0,T),\\
     y=0 &\text{ on }\partial\Omega\times(0,T),\\
     y(x,0)=y_0(x),\quad  y_t(x,0)=y_1(x)&\text{ in }\Omega.
    \end{cases}
\end{align*}
In this example, following \cite{longtime}, in the presence of a control, the turnpike property holds for each $\varepsilon\in(0,1)$ when the cost functional penalizes the state and control in suitable energy spaces. Also, from \cite{TURNPIKE_ONDA}, we know that the limit wave equation satisfies the turnpike property. The techniques of this paper allow to prove the uniform turnpike property as $\varepsilon \to 0$. 

Note however that, in both examples,  it is necessary to assume that the controls acts on a subset of the domain assuring the controllability to hold, which amounts to assume the Geometric Control Condition.

\item Consider the  linear system with a stiff lower-order term
\begin{align}\label{relaxetion_sistem}
    \begin{cases}
    u_t+v_x=0,\quad (x,t)\in \R\times\R,\\
    \epsilon v_t+au_x=-(v-g(u)),
    \end{cases}
\end{align}
where $\varepsilon$ is a small parameter, and $a$ is a fixed positive constant. In the limit when $\varepsilon\to0$, the system \eqref{relaxetion_sistem} can be approximated by the viscous conservation law
\begin{align}\label{conservation_law}
    \begin{cases}
    u_t-au_{xx}+(g(u))_x=0,\quad(x,t)\in\R\times\R,\\
        v=-au_x+g(u).
    \end{cases}
\end{align}
System \eqref{relaxetion_sistem} constitutes a relaxation of \eqref{conservation_law},
and was introduced by Jin and Xin in \cite{MR1322811} to approximate conservation laws. Note that the system \eqref{relaxetion_sistem} can be rewritten as follows 
\begin{align}\label{conservation_law_wave}
    \varepsilon u_{tt}+u_t-au_{xx}+(g(u))_x=0,\quad(x,t)\in\R\times\R,
\end{align}
The extension of the uniform controllability result in \cite{MR1782102} to the nonlinear model \eqref{relaxetion_sistem} constitutes an interesting open problem together with the uniform turnpike property in the singular limit. The turnpike property for semilinear parabolic problems has been analyzed in \cite{remarks}.

\item The turnpike property has been used in greedy algorithms for parametric parabolic equations in \cite{MR3905433}, exploiting the fact that the controlled parabolic dynamics is close to the elliptic one.

On the other hand, in \cite{MR4382586} the authors proposed a greedy algorithm for the Vlasov-Fokker-Planck equation (VFP) 
\begin{align*}
    \varepsilon y_t+v y_x- \phi_x y_v=\frac{1}{\varepsilon} \mathcal{Q} y, \quad x, v \in \Omega=(0, l) \times \mathbb{R}
\end{align*}
where $\mathcal{Q}$ is Fokker Planck operator, $\phi(t, x, z)$ is a given parameter-dependent potential (the parameters $z$ are random, but with known distribution), $\varepsilon>0$ is the so-called Knudsen number and $y=y(x,t,v,z)$ is the probability density distribution
of particles at position $x$ with velocity $v$. The proposed greedy algorithm breaks the curse of dimensionality and allows obtaining better convergence rates than those obtained using Monte-Carlo methods.

The boundary control of this equation has already been studied in \cite{shijin_1}, where it is also proved that this control is uniform with respect to the Knudsen number.

Motivated by the above, it would be natural to analyze the turnpike property for the VFP equation and whether this property is uniform with respect to the Knudsen number or, even more, uniform with respect to the random parameters, making it possible to analyze the singular limit when $\varepsilon\to 0$.
It would also be interesting to investigate the implications from a numerical point of view as in \cite{MR3905433}.

\item In the context of parabolic models, analyzing the property of approximate controllability is also natural. It consists in driving the system arbitrarily close to any target in the phase space $L^2(\Omega)$. This property is uniform in the homogenization context \cite{MR1303384}, so it would be natural to explore further the possible convergence in the context of the turnpike property.

\end{enumerate}

\appendix
\section{Proof of technical results}\label{Appendix_lemmas}

 Consider the operators $\mathcal{A}$ and $\mathcal{A}^*$ introduced in \eqref{operator_A} and \eqref{operator_A_adjoint}, respectively. If we consider the coefficients $(a,b,p)\in L^\infty(\Omega)\times (L^\infty(\Omega))^n\times L^\infty(\Omega)$ (uniform) bounded, and assume \eqref{elipticity_of_a}, then is straightforward to see that
\begin{align}\label{elipticidad_A}
     (\mathcal{A} y, y)_{L^2(\Omega)}+C_r\| y\|_{L^2(\Omega)}^2\geq \frac{a_0}{2}\| y\|_{H_0^1(\Omega)}^2,
\end{align}
and 
\begin{align}\label{elipticidad_A_star}
 (\mathcal{A}^*\psi,\psi)_{L^2(\Omega)}+    C_r\|\psi\|_{L^2(\Omega)}^2\geq \frac{a_0}{2}\|\psi\|_{H_0^1(\Omega)}^2,
\end{align}
where $C_r$ is a positive constant such that
\begin{align}\label{constant_Cr}
    \left(\frac{\|b\|_{L^{\infty}(\Omega)}^2}{2a_0}+\|p\|_{L^{\infty}(\Omega)}\right)\leq C_r.
\end{align}

On the other hand, both Theorem \ref{teo_controlabilidad_uniforme_0} and \ref{theorema_1d_null_control} are proved by duality; that is by establishing the observability inequality 
\begin{align}\label{observability_inequality}
    \|\phi(\cdot,0)\|^{2}_{L^{2}(\Omega)}\leq C_T\int_{0}^{T}\|\chi_\omega\phi(\cdot,t)\|^{2}dxdt,
\end{align}
where $C_T$ is the controllability cost, and $\phi$ satisfies the equation 
\begin{align}\label{adjoin_control_var}
\begin{cases}
    -\phi_t +\mathcal{A}^*\phi=0\quad & (x,t)\in\Omega\times(0,T),\\
    \phi(x,t)=0 &(x,t)\in\partial\Omega\times(0,T),\\
    \phi(x,T)=\phi_0&x\in \Omega.
    \end{cases}
\end{align}
Since Theorems \ref{teo_controlabilidad_uniforme_0} and \ref{theorema_1d_null_control} ensure uniform controllability, the cost of controllability $C_T$ in \eqref{observability_inequality} is independent of the parameters $(a,b,p)$, but dependent of the time horizon $T$ and the ellipticity constant $a_0$.
The inequality \eqref{observability_inequality} will be useful in the following proof.

\addtocontents{toc}{\protect\setcounter{tocdepth}{1}}
\subsection{Proof of Lemma \ref{lemma_1}}\label{lemma_1_proof}

We divide the proof into three steps, one for each inequality.

{\bf Step 1:} Let us multiply the equation \eqref{rapidly_ocilation_general_heat}, by its solution $ y$ and integrate on $\Omega$
\begin{align*}
    \frac{1}{2}\frac{d}{dt}\left(\| y(\cdot,t)\|^2_{L^2(\Omega)}\right)+(\mathcal{A} y(\cdot,t), y(\cdot,t))_{L^2(\Omega)}=(\chi_{\omega} f(\cdot,t), y(\cdot,t))_{L^2(\Omega)}.
\end{align*}
Then, applying \eqref{elipticidad_A} and the Young's inequality with $\delta>0$, we have
\begin{align*}
    \frac{1}{2}\frac{d}{dt}\left(\| y(\cdot,t)\|^2_{L^2(\Omega)}\right)+&\frac{a_0}{2}\| y(\cdot,t)\|_{H_0^1(\Omega)}^2\\
    &\leq\frac{1}{2\delta}\|\chi_{\omega} f(\cdot,t)\|^2_{L^2(\Omega)}+\frac{\delta}{2}\| y(\cdot,t)\|^2_{L^2(\Omega)}+C_r\| y(\cdot,t)\|_{L^2(\Omega)}^2.
\end{align*}
Now, integrating on the time interval $(0,T)$ and applying Poincar\'e\textquotesingle s inequality, 
\begin{align*}
   \| y(\cdot,T)\|^2_{L^2(\Omega)}+(a_0 C_p-\delta)&\int_0^T\| y(\cdot,t)\|_{L^2(\Omega)}^2\\
   &\leq\frac{1}{\delta}\int_0^T\left(\|\chi_{\omega} f(\cdot,t)\|^2_{L^2(\Omega)}+2C_r\| y(\cdot,t)\|_{L^2(\Omega)}^2\right) dt+  \|y_0(\cdot)\|^2_{L^2(\Omega)}.
\end{align*}
Finally, we conclude the inequality by taking $\delta< a_0 C_p$, where $C_p$ is the Poincar\'e constant.

{\bf Step 2:} Let $I^*=(0,1)\subset(0,T)$ and take $\psi=p+q$ in the interval $I^*$, where $p$ and $q$ satisfy
\begin{align*}
      \begin{cases}
    -p_t +\mathcal{A}^* p=0\quad & \Omega\times I^*,\\
    p(x,t)=0 &\partial\Omega\times I^*,\\
    p(x,1)= \psi(x,1)&\Omega,
    \end{cases}\qquad
      \begin{cases}
    -q_t +\mathcal{A}^* q= y-y_d\quad & \Omega\times I^*,\\
    q(x,t)=0 &\partial\Omega\times I^*,\\
    q(x,1)=0& \Omega.
    \end{cases}
\end{align*}
Observe that $p$ solves the system \eqref{adjoin_control_var}, and therefore, $p$ satisfies the observability inequality
\begin{align*}
    \|p(\cdot,0)\|^{2}_{L^{2}(\Omega)}\leq C\int_{I^*}\|\chi_\omega p(\cdot,t)\|^{2}_{L^2(\Omega)}dt,
\end{align*}
with $C$ independent of $T$  and the coefficients $(a,b,p)$. Let us consider the change of variable $\hat{q}=qe^{C_r t}$ where $C_r$ given by \eqref{constant_Cr}. Then $\hat{q}$ satisfies
\begin{align*}
     \begin{cases}
    -\hat{q}_t +\mathcal{A}^* \hat{q}+C_r\hat{q} =e^{C_r t}(y-y_d) & \Omega\times I^*,\\
    q(x,t)=0 &\partial\Omega\times I^*,\\
    q(x,1)=0& \Omega.
    \end{cases}
\end{align*}
Then, multiplying by $\hat{q}$ the equation satisfied by $\hat{q}$ and integrating on $\Omega\times I^*$ we obtain
\begin{align*}
    \frac{\|q(\cdot,0)\|^2_{L^{2}(\Omega)}}{2} +\int_{I^*}((a+IC_r )\hat{q}(\cdot,t),&\hat{q}(\cdot,t))_{L^2(\Omega)}dt = \int_{I^*} e^{C_r t} ( y(\cdot,t) -y_d(\cdot),\hat{q}(\cdot,t))_{L^2(\Omega)}dt \\
    &\leq \frac{e^{C_r}}{2\delta_1}\int_{I^*}\| y(\cdot,t)-y_d\|^2_{L^2(\Omega)} +\frac{\delta_1}{2}\|\hat{q}(\cdot,t)\|^2_{L^2(\Omega)}dt,
\end{align*}
where we use the Young inequality with $\delta_1>0$. Then, applying \eqref{elipticidad_A_star} and Poincar\'e\textquotesingle s inequality on the left-hand side, we have
\begin{align*}
    \|q(\cdot,0)\|^2_{L^{2}(\Omega)} +(a_0C_p-\delta_1)\int_{I^*}\|q(\cdot,t)\|_{L^2(\Omega)}^2dxdt\leq \frac{e^{C_r}}{\delta_1}\int_{I^*}\| y(\cdot,t)-y_d\|^2_{L^2(\Omega)},
\end{align*}
provided $\delta_1<a_0C_p$. We conclude that there exist $C_\delta>0$ such that
\begin{align*}
      \|q(\cdot,0)\|^2_{L^{2}(\Omega)} +\int_{I^*}\|q(\cdot,t)\|_{L^2(\Omega)}^2dt\leq C_{\delta}\int_{I^*}\| y(\cdot,t)-y_d\|^2_{L^2(\Omega)}.
\end{align*}
Finally returning to the variable $\psi$, we conclude that there exists a constant $C>0$ independent of $T>0$ and the coefficients $(a,b,p)$ such that 
\begin{align*}
    \|\psi(\cdot,0)\|^2_{L^2(\Omega)}
    &\leq 2\left(\|p(\cdot,0)\|^2_{L^2(\Omega)}+\|q(\cdot,0)\|^2_{L^2(\Omega)}\right)\\
    &\leq C\int_{I^*}\|\chi_\omega p(\cdot,t)\|^{2}dt + C_{\delta}\int_{I^*}\| y(\cdot,t)-y_d\|^2_{L^2(\Omega)}dt\\
    &\leq \begin{multlined}[t][12cm]
    \hat{C}_{\delta}\biggr(\int_{I^*}\|\chi_\omega\psi(\cdot,t)\|^{2}_{L^2(\Omega)}dt+ \int_{I^*}\|\chi_\omega q(\cdot,t)\|_{L^2(\Omega)}^{2}dt+\int_{I^*}\| y(\cdot,t)-y_d\|^2_{L^2(\Omega)}dt\biggr)
    \end{multlined}\\
    &\leq \hat{C}_{\delta}\biggr(\int_{I^*}\|\chi_\omega\psi(\cdot,t)\|^{2}_{L^2(\Omega)}dt+\int_{I^*}\| y(\cdot,t)-y_d\|^2_{L^2(\Omega)}dt\biggr)\\
    &
    \leq C\biggr(\int_{0}^T\|\chi_\omega\psi(\cdot,t)\|^{2}_{L^2(\Omega)}+\| y(\cdot,t)-y_d\|^2_{L^2(\Omega)}dt\biggr).
\end{align*}

{\bf Step 3:} Let us multiply by $\overline{\psi}$ the system satisfied by $\overline{y}$ and integrate on $\Omega$ 
\begin{align*}(a\overline{y}(\cdot),\overline{\psi}(\cdot))_{L^2(\Omega)} =( \overline{f}(\cdot),\chi_\omega\overline{\psi}(\cdot)))_{L^2(\Omega)}=-\| \overline{f}(\cdot)\|_{L^2(\Omega)}^2.
\end{align*}
Analogously, multiplying by $\overline{y}$ the system satisfied by $\overline{\psi}$ and integrating on $\Omega$, yields
\begin{align*}
   (a\overline{y}(\cdot),\overline{\psi}(\cdot))_{L^2(\Omega)} =(\overline{y}(\cdot),\overline{y}(\cdot)-y_d(\cdot))_{L^2(\Omega)}= \|\overline{y}(\cdot)\|^2_{L^2(\Omega)} -(\overline{y}(\cdot),y_d(\cdot))_{L^2(\Omega)}.
\end{align*}
Then, combining the above equalities, we deduce
\begin{align*}
    \|\overline{y}(\cdot)\|^2_{L^2(\Omega)}+ \| \overline{f}(\cdot)\|^2_{L^2(\Omega)} =( \overline{y}(\cdot),y_d(\cdot))_{L^2(\Omega)}.
\end{align*}
Now, using Young's inequality on the right-hand side 
\begin{align*}
    \|\overline{y}(\cdot)\|^2_{L^2(\Omega)}+\| \overline{f}(\cdot)\|^2_{L^2(\Omega)} \leq \frac{1}{2} \|\overline{y}(\cdot)\|^2_{L^2(\Omega)}+\frac{1}{2}\|y_d(\cdot)\|^2_{L^2(\Omega)}.
\end{align*}
Hence, we have
\begin{align}\label{estacion_uniforme}
\|\overline{y}(\cdot)\|^2_{L^2(\Omega)}+\| \overline{f}(\cdot)\|^2_{L^2(\Omega)} \leq  \|y_d(\cdot)\|_{L^2(\Omega)}^2.
\end{align}
Moreover, multiplying by $\overline{y}$ the system satisfied  by $\overline{y}$,   and integrating by parts in $\Omega$, we have
\begin{align*}
    (a\nabla \overline{y},\nabla \overline{y})_{L^2(\Omega)}= -(b\nabla \overline{y},\overline{y})_{L^2(\Omega)}-(p\overline{y},\overline{y})_{L^2(\Omega)}+( \overline{f},\overline{y})_{L^2(\Omega)}.
\end{align*}
Then, using \eqref{elipticity_of_a} and Cauchy–Schwarz's inequality 
\begin{align*}
    a_0\|\nabla \overline{y}\|_{L^2(\Omega)}^2\leq \|b\|_{L^{\infty}(\Omega)}\|\nabla \overline{y}\|_{L^2(\Omega)}\|\overline{y}\|_{L^2(\Omega)}+ \|p\|_{L^{\infty}(\Omega)}\|\overline{y}\|_{L^2(\Omega)}^2+\| \overline{f}\|_{L^2(\Omega)}\|\overline{y}\|_{L^2(\Omega)},
\end{align*}
and thanks to Young’s inequality with $\delta< 2a_0/\|b\|_{L^{\infty}(\Omega)}$, we have
\begin{align*}
    \left(a_{0}-\frac{\delta\|b\|_{L^{\infty}(\Omega)}}{2}\right)\|\nabla \overline{y}\|_{L^2(\Omega)}^2\leq \left(\frac{\|b\|_{L^{\infty}(\Omega)}}{2\delta}+\frac{1}{2}+\|p\|_{L^\infty(\Omega)}\right)\|\overline{y}\|_{L^2(\Omega)}+\frac{\| \overline{f}\|^2_{L^2(\Omega)}}{2}
\end{align*}
Thus, combining \eqref{estacion_uniforme} and the fact that the coefficient $b$ and $p$ are bounded in $(L^\infty(\Omega))^n$ and $L^\infty(\Omega)$, respectively, we deduce that there exists a constant $C>0$ independent of $T>0$ and the coefficients $(a,b,p)$ such that 
\begin{align}\label{estacion_uniforme_2}
\|\overline{y}(\cdot)\|^2_{H^1(\Omega)}+\| \overline{f}(\cdot)\|^2_{L^2(\Omega)} \leq C \|y_d(\cdot)\|_{L^2(\Omega)}^2.
\end{align}
On the other hand, consider $t\in (0,\tau)$ with $\tau$ an arbitrary positive constant. Observe that $\overline{\psi}(x)$ satisfies
\begin{align*}
    \begin{cases}
    -\overline{\psi}_t +\mathcal{A}^* \overline{\psi}=\overline{y}-y_d\quad & \Omega\times (0,\tau),\\
    \overline{\psi}(x,t)=0 &\partial\Omega\times (0,\tau),\\
    \overline{\psi}(x,\tau)= \overline{\psi}&\Omega.
    \end{cases}
\end{align*}
Let us take $\overline{\psi}(x)=p(x)+q(x)$, where
\begin{align*}
      \begin{cases}
    -p_t +\mathcal{A}^* p=0\quad & \Omega\times (0,\tau),\\
    p(x,t)=0 &\partial\Omega\times (0,\tau),\\
    p(x,\tau)= \overline{\psi}&\Omega,
    \end{cases}\qquad
      \begin{cases}
    -q_t +\mathcal{A}^* q=\overline{y}-y_d\quad & \Omega\times (0,\tau),\\
    q(x,t)=0 &\partial\Omega\times (0,\tau),\\
    q(x,\tau)=0& \Omega.
    \end{cases}
\end{align*}
Let us note that $p$ solves the system \eqref{adjoin_control_var}, and therefore, $p$ satisfies the observability inequality
\begin{align*}
    \|p(\cdot,0)\|^{2}_{L^{2}(\Omega)}\leq C_{\tau}\int_{0}^{\tau}\|\chi_\omega p(\cdot,t)\|^{2}_{L^2(\Omega)}dt,
\end{align*}
equivalent to
\begin{align}\label{des_obs_estacionaria}
     \|p(\cdot)\|^{2}_{L^{2}(\Omega)}\leq C_{\tau}\tau\|\chi_\omega p(\cdot)\|^{2}_{L^2(\Omega)}.
\end{align}
Now consider the change of variable $ \eta(x,t)=q(x)e^{tC_r}$, with $t\in (0,\tau)$. Observe that $\overline{\psi}$ satisfies
\begin{align*}
     \begin{cases}
    -\eta_t + \mathcal{A}^* \eta +C_r\eta=e^{tC_r}(\overline{y}-y_d)\quad & \Omega\times (0,\tau),\\
    \eta(x,t)=0 &\partial\Omega\times (0,\tau),\\
    \eta(x,0)= \overline{\psi}  &\Omega,\\
    \eta(x,\tau)= 0 &\Omega,
\end{cases}
\end{align*}
Then, multiplying by $\eta$ the equation satisfied by $\eta$ and integrating on $\Omega\times(0,\tau)$, we have
\begin{align*}
    \|\eta(\cdot,0)\|^2_{L^2(\Omega)}+2\int_{0}^{\tau}&((a+IC_r)\eta(\cdot,t),\eta(\cdot,t))_{L^2(\Omega)}dt=2\int_{0}^{\tau}(e^{tC_r}(\overline{y}(\cdot)-y_d(\cdot)),\eta(\cdot,t))_{L^2(\Omega)}dt
\end{align*}
Applying the inequality \eqref{elipticidad_A_star}, Young's inequality, Poincar\'e\textquotesingle s inequality, and returning to the variable $q$, we obtain
\begin{align*}
    \|q(\cdot)\|^2_{L^2(\Omega)} +& C_p\|q(\cdot)\|_{L^2(\Omega)}^2\int_{0}^{\tau}e^{tC_r}dt\leq e^{2\tau C_r}\tau\frac{\|\overline{y}(\cdot)-y_d(\cdot)\|^2_{L^2(\Omega)}}{\delta}dt+\|q(\cdot)\|^2_{L^2(\Omega)}\delta\int_{0}^{\tau} e^{2tC_r}dt
\end{align*}
Thus, factoring the left-hand side and taking $\delta>0$ small enough, we find that there exists a positive constant $C$, independent of the coefficients $(a,b,p)$ such that
\begin{align}\label{ineq_q_apen}
    \|q(\cdot)\|^2_{L^2(\Omega)}\leq C \|\overline{y}(\cdot)-y_d(\cdot)\|^2_{L^2(\Omega)}.
\end{align}
Then, as in Step 2, combining \eqref{ineq_q_apen} and \eqref{des_obs_estacionaria} we obtain 
\begin{align*}
    \|\overline{\psi}(\cdot)\|_{L^2(\Omega)}^2&\leq 2( \|q(\cdot)\|^2_{L^2(\Omega)}+ \|p(\cdot)\|^2_{L^2(\Omega)}) \\
    &\leq C_{\tau}\left(\|\chi_\omega p(\cdot)\|^{2}_{L^2(\Omega)} + \|\overline{y}(\cdot)-y_d(\cdot)\|^2_{L^2(\Omega)}\right)\\
    &\leq  C_{\tau}\left(\|\chi_\omega \overline{\psi}(\cdot)\|^{2}_{L^2(\Omega)}+ \|\chi_\omega q(\cdot)\|^{2}_{L^2(\Omega)} + \|\overline{y}(\cdot)-y_d(\cdot)\|^2_{L^2(\Omega)}\right)\\
    &\leq  C_{\tau}\left(\|\chi_\omega \overline{\psi}(\cdot)\|^{2}_{L^2(\Omega)} + \|\overline{y}(\cdot)-y_d(\cdot)\|^2_{L^2(\Omega)}\right).
\end{align*}
Then, using the characterization of the optimal control $ \overline{f}$,
\begin{align*}
     \|\overline{\psi}(\cdot)\|_{L^2(\Omega)}^2\leq C(\| \overline{f}\|^2_{L^2(\Omega)}+\|\overline{y}(\cdot)-y_d(\cdot)\|^2_{L^2(\Omega)})
\end{align*}
Moreover, by multiplying by $\overline{\psi}$ the system satisfied by $\overline{\psi}$, and performing the same estimations as above for $\overline{y}$, we have that
\begin{align*}
    \left(a_{0}-\frac{\delta\|b\|_{L^{\infty}(\Omega)}}{2}\right)\|\nabla \overline{\psi}\|_{L^2(\Omega)}^2\leq \left(\frac{\|b\|_{L^{\infty}(\Omega)}}{2\delta}+\frac{1}{2}+\|p\|_{L^\infty(\Omega)}\right)\|\overline{\psi}\|_{L^2(\Omega)}+\frac{\|\overline{y}-y_d\|^2_{L^2(\Omega)}}{2}
\end{align*}
Finally, we conclude the proof by using \eqref{estacion_uniforme} and the previous estimations.

\subsection{Proof of Lemma \ref{Lemma_exponencial}}\label{Lemma_exponencial_proof}

To prove Lemma \ref{Lemma_exponencial}, we divide the proof into two steps.

{\bf Step 1:}
Let $( y, f,\psi)$ the optimal variables of problem \eqref{evolutive_control_problem}. Then we have the inequality
\begin{align}\label{Inequality_necesarry_TurnExp}
\begin{split}
    \int_0^T \| y(\cdot,t)-y_d(\cdot)\|_{L^2(\Omega)}^2dt+\int_0^T \| f(\cdot,t)\|^2_{L^2(\Omega)}dt =(  y(\cdot,0), \psi(\cdot,0))_{L^2(\Omega)}-\int_0^T (y_d(\cdot),  y(\cdot,t)-y_d(\cdot) )_{L^2(\Omega)}dt.
\end{split}
\end{align}
In fact, let us multiply the equation \eqref{rapidly_ocilation_general_heat} by $\psi$, the solution of the adjoint system \eqref{adj_var_op} and integrate by part on space and time. Then, we have
\begin{align*}
        -( y(\cdot,0),\psi(\cdot,0) )_{L^2(\Omega)}  + \int_0^T(  y(\cdot,t), -\psi_t(\cdot,t)- a^{*}\psi(\cdot,t) )_{L^2(\Omega)}dt   =\int_0^T( f(\cdot,t),\chi_{\omega}\psi(\cdot,t))_{L^2(\Omega)}dt.
\end{align*}
Since $\psi$ solves \eqref{adj_var_op} and the optimal control is characterized by $ f(x,t)=-\chi_\omega\psi(x,t)$,
\begin{multline*}
        ( y(\cdot,0),\psi(\cdot,0) )_{L^2(\Omega)}-\int_0^T  \| f (\cdot,t)\|^2_{L^2(\Omega)} dt=\int_0^T( y(x,t),  y(x,t)-y_d(x) )_{L^2(\Omega)}dt  \\
        =\int_0^T\| y(\cdot,t)-y_d(\cdot)\|_{L^2(\Omega)}^2+(y_d(\cdot),  y(\cdot,t)-y_d(\cdot) )_{L^2(\Omega)}dt.
\end{multline*}
Then we have
\begin{multline*}
    \int_0^T \| y(\cdot,t)-y_d(\cdot)\|_{L^2(\Omega)}^2dt+\int_0^T \| f(\cdot,t)\|^2_{L^2(\Omega)}dt \\ =(  y(\cdot,0), \psi(\cdot,0))_{L^2(\Omega)}-\int_0^T (y_d(\cdot),  y(\cdot,t)-y_d(\cdot) )_{L^2(\Omega)}dt.
\end{multline*}

{\bf Step 2:} Let us begin by noting that the operator  $\mathcal{E}(T)$ maps the initial condition of the equation \eqref{evolutive_control_problem} to the final state of the adjoint system \eqref{adj_var_op}. Therefore, based on the well-posedness of the optimality system, $\mathcal{E}(T)$ is well-defined.
Furthermore, $\mathcal{E}(T)$ can be expressed as a composition of two maps: the map $y_0\mapsto \{ y,\psi\}$ and the map $\{ y,\psi\}\mapsto \psi(0)$. These individual maps are known to be linear and continuous according to \cite[Lemma 4.2]{lions_book}. As a result, the operator $\mathcal{E}(T)$ is also linear and continuous.
\begin{enumerate}
    \item From the inequality \eqref{Inequality_necesarry_TurnExp}, with $y_d\equiv0$ we deduce directly the variational characterization
    \begin{align*}
        (\mathcal{E}(T)y_0,y_0)_{L^2(\Omega)}=  \min_{ f\in L^{2}(0,T;\Omega)}J^{T,0}( f).
    \end{align*}
    \item Let us prove that the limit 
\begin{align}\label{limit_E_cursiva}
    \lim_{T\to\infty}(\mathcal{E}(T)y_0,y_0)_{L^2(\Omega)}, 
\end{align}
is finite. For this purpose, observe that $\mathcal{E}(T)$ is bounded and increasing in $T$. In fact, take $t_1\leq t_2$ then, using the variational characterization, it is clear that $\mathcal{E}(t_1)\leq\mathcal{E}(t_2)$. On the other hand, since the system \eqref{rapidly_ocilation_general_heat} is uniformly null controllable at any finite time, then for the time horizon $T=1$, there exists a control $g_0\in L^2(0,T;\Omega)$ driving the system to zero at time $1$, i.e., $ y(\cdot,1)=0$ a.e. in $\Omega$. Furthermore, there exists a constant $C_{r}$ independent of the coefficients such that $\|g_0\|_{L^2(0,1;\Omega)}\leq C_{r}\|y_0\|_{L^2(\Omega)}$. Then the two-step control
    \begin{align}\label{two_step_control}
        \hat{g}(x,t)=\begin{cases}
        g_0(x,t), & \text{if }t\in(0,1),\\
        0,& \text{if }t\in (1,T),
        \end{cases}
    \end{align}
    satisfies that $\|\hat{g}\|_{L^2(0,T;\Omega)}\leq C_{r}\|y_0\|_{L^2(\Omega)}$, and the associated state  satisfies $y(x,t)=0$ for every $t\in[1,T)$. Finally, since $( y, f)$ is the optimal pair, we conclude that
\begin{align*}
(\mathcal{E}(T)y_0(\cdot), y_0(\cdot))_{L^2(\Omega)}=\min_{ f\in L^{2}(0,T;\Omega)}J^{T,0}( f)\leq C_{r}\|y_0(\cdot)\|^2_{L^2(\Omega)},
\end{align*}
which ensures that $\mathcal{E}(T)$ is uniformly bounded. Therefore, the limit \eqref{limit_E_cursiva} is finite.
\end{enumerate}

\subsection{Proof of Lemma \ref{Lemma_3_exponencial}}\label{Lemma_3_exponencial_proof}

 We divide the proof into two steps.

{\bf Step 1: }Let us take a sequence of times $T_n\to \infty$ and consider the optimization problem 
\begin{align}\label{problem_Jinf_ap}
    \min_{ f_n\in L^{2}(0,T_n;\Omega)}\biggr\{ J^{T_n,0}( f_n)=\frac{1}{2}\int_0^{T_n} \| f_n(\cdot,t)\|_{L^2(\Omega)}^2  +\| y_n(\cdot,t)\|^2_{L^2(\Omega)}dt\biggr\},
\end{align}
where $ y_n$ is the solution of \eqref{rapidly_ocilation_general_heat} with time horizon equals to $T_n$. Denote by $( y_n, f_n,\psi_n)$ the triple state-control-adjoint optimal variables of the optimization problem \eqref{problem_Jinf_ap}, where $\psi_n$ is the solution of \eqref{adj_var_op} with time horizon $T_n$. Hereafter we extend by zero the functions $( y_n, f_n,\psi_n)$ in the interval $(T_n,\infty)$. 

Let us recall the infinite time optimization problem
\begin{align}\label{infinity_time_problem}
    \min_{ \hat{f}\in L^{2}(0,\infty;\Omega)}\biggr\{ J^{\infty,0}( \hat{f})=\frac{1}{2}\int_0^\infty \| \hat{f}(\cdot,t)\|_{L^2(\Omega)}^2  +\| \hat{y}(\cdot,t)\|^2_{L^2(\Omega)}dt\biggr\},
\end{align}
where $ \hat{y}$ is solution of \eqref{rapidly_ocilation_general_heat} with time horizon $T=\infty$.

In the first part of the proof of the Lemma \ref{Lemma_3_exponencial}, our goal is to prove that
\begin{align*}
    ( y_n,\psi_n, f_n)\to( \hat{y},\hat{\psi}, \hat{f}),\quad\text{ strongly in}\quad \left(L^2(0,\infty;\Omega)\right)^3. 
\end{align*}

First, using the two-step control \eqref{two_step_control}, we observe that the problem \eqref{infinity_time_problem} is well-defined. Consequently, the optimal variables $( \hat{y}, \hat{f},\hat{\psi})$ are unique and satisfy
\begin{align}\label{system_y*psi*}
    \begin{cases}
     \hat{y}_{t} +\mathcal{A}  \hat{y}= -\chi_\omega\hat{\psi}  \quad & (x,t)\in\Omega\times(0,\infty),\\
    -\hat{\psi}_{t} -\mathcal{A}^*\hat{\psi}=  \hat{y}  \quad & (x,t)\in\Omega\times(0,\infty),\\
     \hat{y}(x,t)=\hat{\psi}(x,t)=0 &(x,t)\in\partial \Omega\times(0,\infty),\\
     \hat{y}(x,0)=y_0(x),\, \hat{\psi}(x,t) \overset{t\to\infty}{\to} 0 &x\in \Omega,\\
     \hat{f}=-\chi_\omega\hat{\psi} & (x,t)\in\Omega\times(0,\infty).
    \end{cases}
\end{align}

Applying Lemma \ref{lemma_1} to the inequality \eqref{Inequality_necesarry_TurnExp} when $y_d\equiv0$, we conclude that there exists a positive constant $C$ independent on $T_n$ for all $n$ such that
\begin{align}\label{inequality_uniform_yn_psin}
    \int_0^{\infty}\| y_n(\cdot,t)\|^2_{L^2(\Omega)}  +  \|\chi_\omega\psi_n(\cdot,t)\|^2_{L^2(\Omega)}dt\leq C.
\end{align}
Then, if we multiply by $\psi$ the equation satisfied by $\psi_n$, integrating by parts, and using the inequality \eqref{inequality_uniform_yn_psin}, see deduce that there exists $C>0$ independent of $n$ such that
\begin{align*}
    \int_0^\infty \|\psi_n(\cdot,t)\|_{H_0^1(\Omega)}^2dt\leq C.
\end{align*}
Carrying out the same procedure for $ y_n$, we have
\begin{align*}
  \int_0^\infty \| y_n(\cdot,t)\|_{H_0^1(\Omega)}^2dt\leq C.
\end{align*}
Therefore, there exist two function $\alpha$ and $\beta$ in $L^2(0,\infty;H_0^1(\Omega))$ such that  $ y_n\wconv\alpha$ and $\psi_n\wconv\beta$ weakly in $L^2(0,\infty;H^1_0(\Omega))$. Now take $\eta\in L^2(0,\infty;H^1_0(\Omega))$, multiplying the equation satisfied by $ y_n$, and integrating on $(0,\infty)\times\Omega$, we get
\begin{multline*}
    \int_0^\infty (( y_n)_t , \eta )_{L^2(\Omega)}dt+ \int_0^\infty(\nabla y_n ,a(x)\nabla\eta )_{L^2(\Omega)}dt+\int_0^\infty(\nabla y_n ,b(x)\eta )_{L^2(\Omega)}dt
    \\+\int_0^\infty( y_n ,p(x)\eta )_{L^2(\Omega)}dt=- \int_0^\infty((\chi_\omega\psi_n) , \eta )_{L^2(\Omega)}dt.
\end{multline*}
We deduce that $( y_n)_t$ converge weakly to $\alpha_t$ in $L^2(0,\infty;H^{-1}(\Omega))$. Furthermore, from the uniqueness of the optimal variables of the problem \eqref{infinity_time_problem}, necessarily $ \hat{y}=\alpha$, $\hat{\psi}=\beta$, and $ f\wconv  \hat{f}$ in $L^2(0,\infty;H^1_0(\Omega))$.

Now, in order to obtain strong convergence, we use the structure of the functionals. Let us denote
\begin{align*}
    j_{n}=\inf_{f\in L^2(0,T_n;\Omega)}J^{T_n,0}(f), \quad\text{ and }\quad j=\inf_{f\in L^2(0,\infty;\Omega)}J^{\infty,0}(f).
\end{align*}
Since the functional is lower semicontinuous and convex, by the weak convergence, we have that 
\begin{align*}
    j\leq \liminf_{n\to\infty}j_{n}.
\end{align*}
On the other hand, since $ f_n$ and $ \hat{f}$ are the solutions of $j_n$ and $j$, respectively, then 
\begin{align*}
    j=J^{\infty,0}( \hat{f})=\limsup_{n\to\infty} J^{T_n,0}( \hat{f})\geq \limsup_{n\to\infty} J^{T_n,0}( f_n)= \limsup_{n\to\infty}j_n.
\end{align*}
Therefore, $j_n\to j$ as $n\to \infty$, ensuring the strong convergence of $ y_n$, $\psi_n$ and $ f_n$ to  $ \hat{y}$, $\hat{\psi}$ and $ \hat{f}$ in $L^2(0,\infty;\Omega)$, respectively.

{\bf Step 2: } From the strong convergence, we can pass the limit in the inequality \eqref{inequality_uniform_yn_psin} to obtain 
 \begin{align}\label{inequality_uniform_ystar_psistar}
    \int_0^{\infty}\| \hat{y}(\cdot,t)\|^2_{L^2(\Omega)}  +  \|\chi_\omega\hat{\psi}(\cdot,t)\|^2_{L^2(\Omega)}dt\leq C.
\end{align}
Now, recall the operator $\hat{E}(0): L^2(\Omega) \to L^2(\Omega)$ defined as  $
    \hat{E}(0)y_0(x) :=\hat{\psi}(x,0).
$
In the same way as the operator $\mathcal{E}$, we can check that $\hat{E}$ is well-defined, linear, and continuous. Also, if we restrict the problem \eqref{system_y*psi*} to the interval, $(s,\infty)$ we can define the operator $\hat{E}(s) \hat{y}(x,s) :=\hat{\psi}(x,s),$ for each $s\in(0,\infty)$. 

Since the coefficients of the equation \eqref{system_y*psi*} do not depend on the time variable, $\hat{E}$ is constant with respect to $s$. That is, $\hat{E}(s) \hat{y}(x,s)=\hat{E}(0) \hat{y}(x,s)$ for all $ \hat{y}(x,s)$ and $s\in (0,\infty)$.

Let us consider the sequences $( y_n,\psi_n)$ of optimal pairs of the functional $J^{T_n,0}$. Then, from the first part of the proof, we have that $\psi_n\to\hat{\psi}$. Therefore, using the definition of $\mathcal{E}$ and $\hat{E}$, we obtain
\begin{align*}
    \mathcal{E}(t)y_0\longrightarrow\hat{E}y_0\quad\text{strongly in }L^2(\Omega), \text{ when } t\to\infty,
\end{align*}
for any $y_0\in L^{2}(\Omega)$. Finally, since $ f_n(x,t)\to \hat{f}(x,t)$ and the characterization of the stationary optimal control, we obtain that $ \hat{f}(x,t)=-\chi_\omega\hat{E} \hat{y}(x,t)$.

\subsection{Proof of the Lemma \ref{corollary_1_exponencial_lemma}} \label{corollary_1_exponencial_lemma_proof}

First, observe that multiplying the system \eqref{system_y*psi*} by $ \hat{y}$ and integrating on $\Omega$, we have that 
\begin{align}\label{2.27}
    \frac{d}{dt}\biggr((\hat{E} \hat{y}(\cdot,t), \hat{y}(\cdot,t))_{L^2(\Omega)}\biggr) = -\|\chi_\omega \hat{E} \hat{y}(\cdot,t)\|_{L^2(\Omega)}^2-\| \hat{y}(\cdot,t)\|^2_{L^2(\Omega)}\leq0..
\end{align}
 Let us denote the operator $\mathcal{M}:=(\mathcal{A} +\chi_\omega \hat{E})$. We divide the proof into two steps.

{\bf Step 1: }We claim that 
\begin{align}\label{convergence_sup_lemma}
    \lim_{t\to\infty}\left( \sup_{\|y_0\|_{L^2(\Omega)}\leq 1  } (\hat{E} \hat{y}(\cdot,t), \hat{y}(\cdot,t))_{L^2(\Omega)}\right)=0,
\end{align}
Observe that from \eqref{2.27} the function defined by
\begin{align*}
    l(t):= \sup_{\|y_0\|_{L^2(\Omega)}\leq 1} (\hat{E} \hat{y}(\cdot,t), \hat{y}(\cdot,t))_{L^2(\Omega)},
\end{align*}
is non-increasing. Hence, it admits a limit as $t\to\infty$. To prove \eqref{convergence_sup_lemma}, observe that the operator $\mathcal{M}$ is a generator of a $C_0-$semigroup $S(t)$ (since $\hat{E}\in \mathcal{L}(L^2(\Omega))$), which depends of the coefficients $(a,b,p)$. 

Let us consider the sequence of times $t_n\to\infty$, and the sequence of initial condition $y_{0}^n$ such that the primal solution of \eqref{system_y*psi*} with initial condition $y_0^n$, given by $ \hat{y}_n(x,t)=S(t)y_{0}^n(x)$, satisfies
\begin{align*}
    l(t_n)-\frac{1}{n}\leq(\hat{E} \hat{y}_n(\cdot,t_n), \hat{y}_n(\cdot,t_n))_{L^2(\Omega)}.
\end{align*}
Note that the sequence $z_n(x,t)= \hat{y}_n(x,t+t_n)$ satisfies  
\begin{align}\label{equation_zn}
    (z_n)_{t} +M z_n = 0 \quad & (x,t)\in\Omega\times(-t_n,\infty),
\end{align}
which is the same equation satisfied by $ \hat{y}_n$. Then extending by zero $z_n$, in the interval $(-\infty,-t_n)$, we can apply Lemma \ref{lemma_1}, and the same arguments of the (first part) proof of the Lemma \ref{Lemma_3_exponencial}, to conclude that there exists $z$ such that $z_n\to z$ strongly in $L^2(\R;\Omega)$. Thus, $z$ satisfies the equation
\begin{align*}
    z_{t} +M z = 0, \quad & (x,t)\in\Omega\times\R.
\end{align*}
Observe that $t_n+t<t_n$ for every $t<0$. Then, using the definition of $l(t)$ and its monotone character, we have
\begin{align*}
    l(t_n)-\frac{1}{n}\leq(\hat{E} \hat{y}_n(\cdot,t_n), \hat{y}_n(\cdot,t_n))_{L^2(\Omega)}\leq(\hat{E} \hat{y}_n(\cdot,t_n+t), \hat{y}_n(\cdot,t_n+t))_{L^2(\Omega)}\leq l(t+t_n),
\end{align*}
for all $t<0$. This implies that $(\hat{E}z(\cdot,t),z(\cdot,t))_{L^2(\Omega)}$ is constant in $(-\infty,0)$. Moreover,
\begin{align*}
    (\hat{E}z(\cdot,t),z(\cdot,t))_{L^2(\Omega)}=\lim_{n\to\infty}l(t_n).
\end{align*}
Since $z$ and $ \hat{y}(\cdot,t)$ satisfy the same system, we apply \eqref{2.27} to $z$, obtaining
\begin{align*}
    \|\chi_\omega \hat{E}z(\cdot,t)\|_{L^2(\Omega)}^2=\|z(\cdot,t)\|^2_{L^2(\Omega)}=0,\quad \forall t\in (-\infty,0),\quad\forall\varepsilon>0,
\end{align*}
and thus $z(\cdot,t)=0$ for every $t\in (-\infty,0)$. Consequently,  $(\hat{E}z(\cdot,t),z(\cdot,t))_{L^2(\Omega)}=0$. Therefore,
$$\lim_{t\to\infty}l(t)=0,$$
 concluding \eqref{convergence_sup_lemma}.

{\bf Step 2:} Let us prove the uniform exponential decay of $ \hat{y}(x,t)$. To this end, we first prove that $\hat{E}$ is uniformly continuous. Note that, due to \eqref{convergence_sup_lemma}, we can integrate \eqref{2.27} in the time interval $(0,\infty)$. Obtaining the following variational characterization
\begin{align}\label{variational_charact_Ehat}
    (\hat{E}y_0(\cdot),y_0(\cdot))_{L^2(\Omega)} =\min_{ f\in L^{2}(0,\infty;\Omega)}J^{\infty,0}( f)\geq0.
\end{align}
The above implies that $\hat{E}$ is positive semi-definite. Since the uniform null controllability holds, the system can be steered to zero at any finite time. Then, we can use again the two-step control given by \eqref{two_step_control} (in this case with $T=\infty$). This guarantees that there exists some $C_{r}>0$, independent of the coefficients $(a,b,p)$, such that
\begin{align}\label{cota_problema_tiempoinfinito}
\min_{ f\in L^{2}(0,\infty;\Omega)}J^{\infty,0}( f)\leq C_{r}\|y_0(\cdot)\|^2_{L^2(\Omega)}.
\end{align}
On the other hand, if we multiply by $\hat{\psi}$ the equation satisfied by $\hat{\psi}$, integrating on $\Omega\times (0,\infty)$, integrating by parts and applying \eqref{variational_charact_Ehat}, we have
\begin{align*}
    \|\hat{\psi}(\cdot,0)\|_{L^2(\Omega)}^2+a_0 C_p\int_0^\infty\|\hat{\psi}(\cdot,t)\|_{L^2(\Omega)}^2dt\leq 
    \int_0^\infty\| \hat{y}(\cdot,t)\|_{L^2(\Omega)}^2dt.
\end{align*}
Then, using the previous estimation, the definition of $\hat{E}$ and \eqref{cota_problema_tiempoinfinito}, we conclude that
\begin{align}\label{uniform_continous_E}
    \|\hat{E}y_0\|^2_{L^2(\Omega)}\leq \int_0^\infty\| \hat{y}(\cdot,t)\|_{L^2}^2dt \leq \min_{ f\in L^{2}(0,\infty;\Omega)}J^{\infty,0}( f) \leq C_{r}\|y_0(\cdot)\|^2_{L^2(\Omega)},
\end{align}
Thus, $E$ is uniformly continuous. Also, we have 
\begin{align*}
     (\hat{E}y_0(\cdot),y_0(\cdot))_{L^2(\Omega)} &=\min_{ f\in L^{2}(0,\infty;\Omega)}J^{\infty,0}( f)\geq  \frac{1}{2}\int_0^\infty\| \hat{y}(\cdot,t)\|_{L^2(\Omega)}^2dt\geq  \frac{1}{2}\|\hat{\psi}(\cdot,0)\|_{L^2(\Omega)}^2= \frac{1}{2}\|\hat{E}y_0(\cdot)\|_{L^2(\Omega)}^2.
\end{align*}
Let us prove that there exist two positive constants $C$ and $\mu$, independent of the coefficients such that
\begin{align}\label{exponencial_estability_solution}
    \| \hat{y}(\cdot,t)\|_{L^2(\Omega)} \leq C e^{-\mu t} \|y_0(\cdot)\|_{L^2(\Omega)}.
\end{align}
From \eqref{uniform_continous_E}, we obtain
\begin{align}\label{energy_estimation_y}
  \| \hat{y}(\cdot,t)\|^2_{L^2(\Omega)}+\int_0^t\| \hat{y}(\cdot,s)\|^2_{L^2(\Omega)}ds \leq C_{r}\|y_0\|^2_{L^2(\Omega)} \quad\text{for every } t\geq 0.
\end{align}
Also we know that $ \hat{y}(\cdot,t)=S(t)y_0$, where $S(t)$ is the $C_0-$semigroup generated by $M$. Therefore, we have
\begin{align*}
    \|S(t)\|_{\mathcal{L}(L^2(\Omega))}^2\leq C_{r} \quad\text{ for every } t\geq 0.
\end{align*}
The above inequality implies that there exists $\nu>0$ independent of the coefficients $(a,b,p)$ and $T$ such that 
\begin{align*}
    \|S(t)\|_{\mathcal{L}(L^2(\Omega))}^2\leq C_{r} e^{2\nu t}\quad\text{ for every } t\geq 0.
\end{align*}
Furthermore, from \eqref{energy_estimation_y} we deduce that
\begin{align*}
    \int_0^t \|S(t)y_0\|^2_{L^{2}(\Omega)}dt \leq C_{r}\|y_0\|^2_{L^{2}(\Omega)}\quad\text{ for every }  t\geq 0.
\end{align*}
Consequently, we get
\begin{align*}
    t\|S(t)y_0\|^2_{L^{2}(\Omega)}&= \int_0^t\|S(t)y_0\|^2ds\leq \int_0^t\|S(s)\|^2_{\mathcal{L}(L^2(\Omega))}\|S(t-s)y_0\|^2_{L^{2}(\Omega)} ds \\
    &\leq C_{r}^2\int_0^t\|S(s)y_0\|^2ds\leq C_{r}^2\|y_0\|^2_{L^{2}(\Omega)},
\end{align*}
for all $t\geq0$. Then we have that  
\begin{align*}
    \|S(t)\|_{\mathcal{L}(L^2(\Omega))}\leq C_{r}\sqrt{\frac{1}{t}}\quad \text{for every } t>0.
\end{align*}
Hence, for $\hat{t}=C_{r}^2/\delta>0$ with $\delta\in(0,1)$, $$\|S(\hat{t})\|_{\mathcal{L}(L^2(\Omega))}\leq\delta<1.$$ On the other hand, we know that every $t>0$ can be written as $t=m\hat{t}+s$, where $m\in\N$ and $s\in (0,\hat{t}]$. Thus,
\begin{align*}
    \|S(t)\|_{\mathcal{L}(L^2(\Omega))}\leq C_{r} \|S(\hat{t})\|_{\mathcal{L}(L^2(\Omega))}^m e^{\nu s}\leq C_{r}\delta^{t/\hat{t}}\delta^{-1}e^{\nu\hat{t}}=C e^{-\mu t},
\end{align*}
and we conclude \eqref{exponencial_estability_solution} taking $$\mu=\frac{-\ln(\delta)\delta}{C_{r}^2}>0,\quad \text{and}\quad C=C_{r}\delta^{-1}e^{\nu\hat{t}},$$ which are independent of $T$ and the coefficients. Finally, denote by $S^*(t)$ the adjoint of the semigroup $S(t)$. Since $L^2(\Omega)$ is a Hilbert space, $S^*(t)$ is generated by $\mathcal{M}^*$. Furthermore,
$
    \|S^*(t)\|_{\mathcal{L}(L^2(\Omega))}=\|S(t)\|_{\mathcal{L}(L^2(\Omega))}.
$
This concludes the proof.

\subsection{Convergence of the optimal variables in the homogenization context}\label{homogenization_theorem_sec2_proof}

\begin{proposition}\label{homogenization_theorem_sec2}
Assume that $a\in L^{\infty}(\R)$ is a periodic function given by \eqref{homogenizated_a} satisfying \eqref{bound_a_homo} and $b\equiv p\equiv0$. Denote by  $(y,f)$ and $(\overline{y},\overline{f})$ the optimal pairs state-control of the optimization problems \eqref{evolutive_control_problem} and  \eqref{stationary_var_problem} subject to the equations \eqref{lim_system} and \eqref{lim_system_stationary}, respectively. Then, we have that
\begin{align*}
     f^\varepsilon\to f \quad \text{strongly in }  L^2(0,T;\Omega),\quad \text{and}\quad
     y^\varepsilon \to y \quad \text{strongly in }   C([0,T];L^2(\Omega)),
\end{align*}
as $\varepsilon\to 0$. Moreover, for the stationary optimal control variables, we have that
\begin{align*}
     \overline{f}^\varepsilon\to \overline{f},\quad\text{and}\quad \overline{y}^\varepsilon \to \overline{y},\quad\text{strongly in } L^2(\Omega),
\end{align*}
as $\varepsilon\to 0$.
\end{proposition}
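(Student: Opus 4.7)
The plan is to establish a $\Gamma$-convergence-type argument for the cost functionals $J_\varepsilon^T$ to the homogenized $J^T$ (and analogously for the stationary case), from which strong convergence of the optimal pairs follows in a standard way.

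First, I would derive uniform-in-$\varepsilon$ bounds. Testing the infimum with the admissible choice $f \equiv 0$ and using parabolic energy estimates that are uniform in $\varepsilon$ thanks to \eqref{bound_a_homo}, one obtains $J_\varepsilon^T(f^\varepsilon) \le C(\|y_0\|_{L^2(\Omega)}, \|y_d\|_{L^2(\Omega)})$. This yields uniform $L^2$-bounds on $f^\varepsilon$ and on $y^\varepsilon - y_d$, and from \eqref{rapidly_heat} uniform bounds on $y^\varepsilon$ in $L^2(0,T;H^1_0(\Omega))$ and on $y^\varepsilon_t$ in $L^2(0,T;H^{-1}(\Omega))$. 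Extracting a subsequence $f^{\varepsilon'} \rightharpoonup f^*$ weakly in $L^2(0,T;\Omega)$, the associated states $y^{\varepsilon'}$ are strongly precompact in $C([0,T]; L^2(\Omega))$ by Aubin--Lions, and by classical one-dimensional periodic homogenization the limit $y^*$ solves \eqref{lim_system} with source $f^*$, the homogenized flux being identified via Tartar's oscillating-test-function method. The analogous weak $L^2$-convergence and strong $L^2$-convergence hold in the stationary case through the compact embedding $H^1_0(0,1) \hookrightarrow L^2(0,1)$.

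To identify $f^* = f$, I would run a standard two-sided variational argument. The $\liminf$ inequality $J^T(f) \le J^T(f^*) \le \liminf J_\varepsilon^T(f^{\varepsilon'})$ follows from the weak lower semicontinuity of the convex control penalty combined with the strong $C([0,T];L^2(\Omega))$-convergence of the states. The $\limsup$ inequality is obtained by using the homogenized optimum $f$ itself as a competitor in the $\varepsilon$-problem, so that $J_\varepsilon^T(f^\varepsilon) \le J_\varepsilon^T(f)$, while $J_\varepsilon^T(f) \to J^T(f)$ by the same homogenization result applied with the fixed source $f$. Chaining these bounds gives $J^T(f^*) = J^T(f)$, and the uniqueness granted by Lemma \ref{wellposedness} forces $f^* = f$ and $y^* = y$; since the limit is unique, the full sequence, not just a subsequence, converges.

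Finally, I would upgrade weak to strong $L^2$-convergence of $f^\varepsilon$ via norm convergence. Chaining the two bounds above actually gives $\lim J_\varepsilon^T(f^\varepsilon) = J^T(f)$; combined with the already-proved strong $L^2$-convergence $y^\varepsilon \to y$ this yields $\|f^\varepsilon\|_{L^2(0,T;\Omega)} \to \|f\|_{L^2(0,T;\Omega)}$, and weak convergence plus norm convergence in a Hilbert space gives strong convergence. The stationary counterpart is treated identically. The main obstacle I anticipate is the rigorous passage to the limit in the state equation under a merely weakly convergent source: the product $a_\varepsilon\, y^\varepsilon_x$ is not weakly closed, so one must exploit the parabolic regularizing effect via Aubin--Lions to promote $y^{\varepsilon'} \to y^*$ to strong $L^2$-convergence, which is precisely what enables both the identification of the homogenized equation and the $\liminf$ inequality for the tracking term.
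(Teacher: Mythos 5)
Your proposal is correct and follows essentially the same route as the paper's proof in Appendix \ref{homogenization_theorem_sec2_proof}: uniform bounds on the optimal pairs, weak compactness, identification of the weak limit with the homogenized optimum via the two-sided variational ($\Gamma$-convergence-type) argument that uses the convergence of states for a fixed competitor control, and the upgrade to strong convergence through convergence of the optimal values plus weak convergence in a Hilbert space. The only imprecision is the claim that Aubin--Lions yields precompactness of the states in $C([0,T];L^2(\Omega))$ under merely weakly convergent sources --- it gives $L^2(0,T;L^2(\Omega))$ precompactness, which suffices for the $\liminf$ inequality, and the $C([0,T];L^2(\Omega))$ convergence is recovered a posteriori from the strong convergence of the controls exactly as in Lemma \ref{Theorem_homogenization_systems_1}, so the argument is unaffected.
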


We denote by $( f^\varepsilon,  y^\varepsilon)$ and $( \overline{f}^\varepsilon, \overline{y}^\varepsilon)$ the optimal variables associated to \eqref{evolutive_control_problem_2} and \eqref{stationary_var_problem_2} respectively. Also, denote by $y^h$ the solution of the homogenized parabolic equation \eqref{lim_system}, and $\overline{y}^h$ the solution of the homogenized elliptic equation \eqref{lim_system_stationary}.

We will make use of the following classical results of homogenization theory, whose proofs can be found in \cite{lions_besoussan,MR1172450}.

\begin{lemma}\label{Theorem_homogenization_systems_1}
Assume that $\Omega=(0,1)$, $a_\varepsilon\in L^{\infty}(\R)$ is a periodic function of period $1$ given by \eqref{homogenizated_a} satisfying \eqref{bound_a_homo}. Let $f^{\varepsilon}$ be a sequence of controls in $L^{2}(0, T;\Omega)$. Denote by $y^{\varepsilon}$ the solution of \eqref{rapidly_ocilation_general_heat} with right-hand side $f^{\varepsilon}$. Further, let $y^h$ be the solution of the homogenized parabolic equation \eqref{lim_system}. Then, the following properties hold:
\begin{enumerate}
    \item if $f^{\varepsilon}$ converges to $f^h$ weakly in $ L^{2}(0, T;\Omega)$ as $\varepsilon \rightarrow 0$, then $y^{\varepsilon}$ satisfies
\begin{align*}
y^{\varepsilon} \wconvs y^h \text{ weakly-* in } L^{\infty}(0, T ; L^{2}(\Omega)),
\end{align*}  
as $\varepsilon \rightarrow 0$.
 \item if $f^{\varepsilon}$ converges to $f^h$ strongly in $ L^{2}(0, T;\Omega)$ as $\varepsilon \rightarrow 0$, then $y^{\varepsilon}$ satisfies
 \begin{align*}
     y^{\varepsilon} \rightarrow y^h \text{ strongly in } C([0, T] ; L^{2}(\Omega)),
\end{align*} 
as $\varepsilon \rightarrow 0$.
\end{enumerate}
\end{lemma}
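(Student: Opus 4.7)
The approach is a direct-method / $\Gamma$-convergence argument resting on the homogenization theorem in Lemma \ref{Theorem_homogenization_systems_1} and its classical elliptic counterpart for \eqref{stationry_system_2}. I will describe the details for the evolutionary problem; the stationary case runs along exactly the same lines, substituting elliptic homogenization for parabolic. First, testing $J^T_\varepsilon$ with the admissible control $f\equiv 0$ and invoking the $\varepsilon$-uniform energy estimates for \eqref{rapidly_heat} that follow from \eqref{bound_a_homo}, I obtain an $\varepsilon$-uniform bound on $\|f^\varepsilon\|_{L^2(0,T;\Omega)}$. Extracting a subsequence, $f^{\varepsilon_k}\wconv f^*$ weakly in $L^2(0,T;\Omega)$, and Lemma \ref{Theorem_homogenization_systems_1}(1) identifies the limit of the corresponding states as $y^{\varepsilon_k}\wconvs y^*$ in $L^\infty(0,T;L^2(\Omega))$, where $y^*$ solves \eqref{lim_system} with right-hand side $f^*$.

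Second, I combine optimality with a recovery sequence. Denote by $f$ the unique minimizer of the homogenized problem and by $y$ the associated state. Since $f$ is $\varepsilon$-independent, applying Lemma \ref{Theorem_homogenization_systems_1}(2) to the state driven by $f$ yields strong convergence in $C([0,T];L^2(\Omega))$, so $J^T_{\varepsilon_k}(f)\to J^T(f)$. Combined with the optimality of $f^{\varepsilon_k}$ and the weak lower semicontinuity of $J^T$, this gives
\begin{align*}
   J^T(f^*)\leq \liminf_{k\to\infty} J^T_{\varepsilon_k}(f^{\varepsilon_k})\leq \limsup_{k\to\infty} J^T_{\varepsilon_k}(f^{\varepsilon_k})\leq \lim_{k\to\infty} J^T_{\varepsilon_k}(f) = J^T(f).
\end{align*}
Uniqueness of the homogenized minimizer then forces $f^*=f$, $y^*=y$, and $J^T_{\varepsilon_k}(f^{\varepsilon_k})\to J^T(f)$.

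Finally, to upgrade weak convergence to strong, I exploit the fact that the cost decomposes into two non-negative quadratic terms, each weakly lower semicontinuous along the convergences already established. Since their sum converges to $J^T(f)$ while each liminf is bounded below by the corresponding homogenized summand, an elementary argument forces both summands to converge individually; in particular $\|f^{\varepsilon_k}\|_{L^2(0,T;\Omega)}\to \|f\|_{L^2(0,T;\Omega)}$, which combined with weak convergence in the Hilbert space $L^2(0,T;\Omega)$ gives strong convergence $f^{\varepsilon_k}\to f$. Re-applying Lemma \ref{Theorem_homogenization_systems_1}(2) upgrades the state convergence to $y^{\varepsilon_k}\to y$ strongly in $C([0,T];L^2(\Omega))$, and a standard Urysohn-subsequence argument (exploiting uniqueness of the limit) promotes these subsequential statements to convergence of the whole families. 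The stationary case is treated identically, using the classical elliptic homogenization result, which gives $\overline{y}^\varepsilon\wconv \overline{y}$ in $H^1_0(\Omega)$ whenever $\overline{f}^\varepsilon\wconv\overline{f}$ in $L^2(\Omega)$, followed by the Rellich-Kondrachov compact embedding for the $L^2$-strong convergence of the state. I expect the separation-of-limits step, splitting the convergence of the sum into convergence of each summand, to be the only genuinely delicate point; everything else reduces to a routine compactness argument combined with the two parts of Lemma \ref{Theorem_homogenization_systems_1}.
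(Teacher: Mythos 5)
Your proposal does not prove the statement in question; it proves a different result while assuming the statement itself. The lemma you were asked to establish is the classical parabolic homogenization result: for an \emph{arbitrary} sequence of right-hand sides $f^\varepsilon$, weak convergence $f^\varepsilon\rightharpoonup f^h$ in $L^2(0,T;\Omega)$ implies $y^\varepsilon\overset{*}{\rightharpoonup} y^h$ in $L^\infty(0,T;L^2(\Omega))$ with $y^h$ solving the homogenized equation with the harmonic-mean coefficient $a_h$, and strong convergence of $f^\varepsilon$ upgrades this to strong convergence in $C([0,T];L^2(\Omega))$. Your argument instead addresses the convergence of the \emph{optimal} pairs $(y^\varepsilon,f^\varepsilon)$ of the control problems (which is Proposition \ref{homogenization_theorem_sec2} in the paper), and it invokes both parts (1) and (2) of the very lemma under discussion as its key tools. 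With respect to the requested statement this is circular: nothing in your text explains why the limit state solves the equation with coefficient $a_h$ rather than, say, the weak-$*$ limit of $a_\varepsilon$, which is precisely the content to be proved.

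A genuine proof would have to contain the homogenization machinery itself: uniform a priori estimates for $y^\varepsilon$ in $L^\infty(0,T;L^2(\Omega))\cap L^2(0,T;H^1_0(\Omega))$ coming from \eqref{bound_a_homo}, weak(-$*$) compactness, and the identification of the limit equation --- in one dimension typically by passing to the limit in the flux $\sigma^\varepsilon=a_\varepsilon y^\varepsilon_x$, using that $1/a_\varepsilon\overset{*}{\rightharpoonup}1/a_h$ and writing $y^\varepsilon_x=\sigma^\varepsilon/a_\varepsilon$ (or, in general dimension, oscillating test functions or the div-curl lemma), together with an Aubin--Lions or energy/corrector argument for the strong convergence in $C([0,T];L^2(\Omega))$ in part (2). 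For what it is worth, the paper does not reprove this lemma either: it cites the classical references of Bensoussan--Lions--Papanicolaou and related texts, and then uses the lemma exactly the way you do, to derive the convergence of the optimal variables. So your argument is a reasonable sketch of the paper's Proposition \ref{homogenization_theorem_sec2}, but as a proof of Lemma \ref{Theorem_homogenization_systems_1} it is missing the entire substance of the statement.
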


\begin{lemma}\label{Theorem_homogenization_systems_2}
    Assume that $\Omega=(0,1)$, $a_\varepsilon\in L^{\infty}(\R)$ is a periodic function of period $1$ given by \eqref{homogenizated_a} satisfying \eqref{bound_a_homo}. Let $ \overline{f}^\varepsilon$ a sequence of controls in $L^2(\Omega)$ such that converge weakly in $L^2(\Omega)$ to $\overline{f}^h$ as $\varepsilon \rightarrow 0$. Then, $\overline{y}^\varepsilon$ the solution of \eqref{stationry_system} with right hand side $ \overline{f}^\varepsilon$ satisfies
\begin{align*}
        \overline{y}^\varepsilon \wconv \overline{y}^h,\quad \text{weakly in }  H_0^1(\Omega),
\end{align*}
as $\varepsilon \rightarrow 0$.
\end{lemma}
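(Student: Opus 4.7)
\textbf{Plan for proving Lemma \ref{Theorem_homogenization_systems_2}.} The stationary equation in one space dimension has the convenient feature that the flux $\xi^\varepsilon := a_\varepsilon(x)\,\overline{y}^\varepsilon_x$ is a scalar function whose derivative is directly controlled by the right-hand side. My plan is to exploit this structure in a standard four-step div--curl-type argument: (i) derive a uniform $H_0^1$ bound on $\overline{y}^\varepsilon$; (ii) upgrade the flux $\xi^\varepsilon$ from weak to strong convergence in $L^2(\Omega)$; (iii) combine this strong convergence with the weak-$*$ convergence $1/a_\varepsilon \wconvs 1/a_h$ in $L^\infty(\Omega)$ to identify the limit flux; and (iv) pass to the limit in the equation and invoke uniqueness.

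\textbf{Step 1 (uniform bound).} Since $\overline{f}^\varepsilon \wconv \overline{f}^h$ weakly in $L^2(\Omega)$, the sequence is bounded in $L^2(\Omega)$. Multiplying the equation $-(a_\varepsilon \overline{y}^\varepsilon_x)_x = \chi_\omega \overline{f}^\varepsilon$ by $\overline{y}^\varepsilon$, integrating over $\Omega=(0,1)$, and using \eqref{bound_a_homo} together with Poincar\'e's inequality yields
\begin{equation*}
a_0 \|\overline{y}^\varepsilon_x\|_{L^2(\Omega)}^2 \le \|\overline{f}^\varepsilon\|_{L^2(\Omega)}\|\overline{y}^\varepsilon\|_{L^2(\Omega)} \le C_p\|\overline{f}^\varepsilon\|_{L^2(\Omega)}\|\overline{y}^\varepsilon_x\|_{L^2(\Omega)},
\end{equation*}
so $\{\overline{y}^\varepsilon\}$ is bounded in $H_0^1(\Omega)$. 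Extract a subsequence (not relabeled) with $\overline{y}^\varepsilon \wconv y^\star$ weakly in $H_0^1(\Omega)$.

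\textbf{Step 2 (strong convergence of the flux).} Set $\xi^\varepsilon := a_\varepsilon \overline{y}^\varepsilon_x \in L^2(\Omega)$. By the bound in Step 1 and \eqref{bound_a_homo}, $\{\xi^\varepsilon\}$ is bounded in $L^2(\Omega)$; moreover $\xi^\varepsilon_x = -\chi_\omega \overline{f}^\varepsilon$ is bounded in $L^2(\Omega)$. Hence $\{\xi^\varepsilon\}$ is bounded in $H^1(\Omega)$, and by the Rellich--Kondrachov theorem in one dimension it converges, up to a further subsequence, strongly in $L^2(\Omega)$ (in fact uniformly) to some $\xi \in H^1(\Omega)$. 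On the other hand, since $a(\cdot)$ is $1$-periodic with $a(x) \ge a_0 > 0$, the classical mean-value convergence for periodic functions gives
\begin{equation*}
\frac{1}{a_\varepsilon(\cdot)} \wconvs \int_0^1 \frac{dx}{a(x)} = \frac{1}{a_h} \quad \text{weakly-$*$ in } L^\infty(\Omega),
\end{equation*}
by definition \eqref{eq_ah} of $a_h$.

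\textbf{Step 3 (identification of the limit flux and closure).} Write $\overline{y}^\varepsilon_x = \xi^\varepsilon / a_\varepsilon$. The product of a strongly convergent sequence in $L^2(\Omega)$ with a weakly-$*$ convergent sequence in $L^\infty(\Omega)$ converges weakly in $L^2(\Omega)$; thus
\begin{equation*}
\overline{y}^\varepsilon_x = \frac{\xi^\varepsilon}{a_\varepsilon} \wconv \frac{\xi}{a_h} \quad \text{in } L^2(\Omega).
\end{equation*}
Since also $\overline{y}^\varepsilon_x \wconv y^\star_x$ in $L^2(\Omega)$, the uniqueness of weak limits forces $\xi = a_h y^\star_x$. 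Passing to the limit in the identity $\xi^\varepsilon_x = -\chi_\omega \overline{f}^\varepsilon$ (in the sense of distributions, using weak convergence of both sides) gives $-a_h y^\star_{xx} = \chi_\omega \overline{f}^h$, with $y^\star \in H_0^1(\Omega)$ by the trace-preserving nature of weak $H_0^1$ convergence. By hypothesis \eqref{condition_kernel}, the homogenized elliptic problem has a unique solution, so $y^\star = \overline{y}^h$. Since the limit does not depend on the chosen subsequence, the whole sequence $\overline{y}^\varepsilon$ converges weakly to $\overline{y}^h$ in $H_0^1(\Omega)$.

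\textbf{Main obstacle.} The only non-routine point is the identification step: weak convergence of both factors $\overline{y}^\varepsilon_x$ and $1/a_\varepsilon$ would not be enough to pass to the limit in the product $a_\varepsilon \overline{y}^\varepsilon_x$. The crucial one-dimensional simplification is that $\xi^\varepsilon$ has an $L^2$-controlled derivative and therefore converges \emph{strongly} in $L^2$; this is precisely what fails in higher dimensions and is the reason the present argument is genuinely one-dimensional.
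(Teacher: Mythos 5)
Your proof is correct: the paper itself does not prove this lemma but refers to the classical homogenization literature, and your argument (uniform $H_0^1$ bound, strong $L^2$ compactness of the flux $\xi^\varepsilon=a_\varepsilon\overline{y}^\varepsilon_x$ via its $L^2$-bounded derivative, weak-$*$ convergence of $1/a_\varepsilon$ to $1/a_h$, and identification plus uniqueness) is precisely the standard one-dimensional proof given in those references. One cosmetic point: invoking \eqref{condition_kernel} for uniqueness of the limit problem is unnecessary, since the homogenized operator $-a_h\partial_{xx}$ with $a_h>0$ constant and Dirichlet conditions is trivially injective.
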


Let us start by observing that, from the uniform convexity of $J^T$, we have that $f^\varepsilon$ and $ y^\varepsilon$ are uniformly bounded in $L^{2}(0, T;\Omega)$. Then there exist $f_l$ and $y_l$ in $L^{2}(0, T;\Omega)$ such that  
\begin{align*}
     f^\varepsilon\wconv f_l,\quad\text{and}\quad y^\varepsilon\wconv y_l\quad\text{ weakly in }L^{2}(0, T;\Omega),
\end{align*}
as $\varepsilon\to 0$. In particular, using Lemma \ref{Theorem_homogenization_systems_1}, 
\begin{align*}
y^{\varepsilon} \wconvs y_l \text{ weakly-* in } L^{\infty}(0, T ; L^{2}(\Omega))
\text{ as }\varepsilon \rightarrow 0.
\end{align*}    
Moreover,  $y_l$ is the solution of the limit system \eqref{lim_system} associated with $f_l$. 
Let us prove that $y_l=y^h$ and $f_l=f^h$. From the lower semicontinuity of the functional $J^T$, we have
\begin{align}\label{des1}
    J^{T}(f_l)\leq \liminf_{\varepsilon\to0} J^{T}( f^\varepsilon).
\end{align}
On the other hand, for every $g\in L^{2}(0, T;\Omega)$, we deduce
\begin{align}\label{des2}
    \liminf_{\varepsilon\to0} J^{T}( f^\varepsilon)\leq \liminf_{\varepsilon\to0} J^{T}(y^{\varepsilon}(h),h),
\end{align}
where $y^{\varepsilon}(g)$ is the state associate to $g$. Then, since $g$ is a constant control, using Lemma \ref{Theorem_homogenization_systems_1}, we obtain $y^{\varepsilon}(g) \rightarrow y^h(g)$ strongly in $C([0, T] ; L^{2}(\Omega))$ as $\varepsilon\to 0$. Thus,
\begin{align}\label{des3}
    \liminf_{\varepsilon\to0} J^{T}(y^{\varepsilon}(g),g)= J^{T}(y^h(g),g)=J^{T}(g)
\end{align}
Since \eqref{des1}, \eqref{des2} and \eqref{des3} hold, we obtain
\begin{align*}
      J^{T}(f_l)\leq J^{T}(g)\quad\text{for every }g\in L^{2}(0, T;\Omega).
\end{align*}

Then, we conclude that $f_l=f^h$ and by the uniqueness, $y_l=y^h$. Now, to conclude the strong convergence, using the weak convergence and the weak semicontinuity of $J^T$, we have
\begin{align*}
   \liminf_{\varepsilon\to 0} J^{T}(y^\varepsilon( f^\varepsilon), f^\varepsilon)\geq  J^T(y^h(f^h),f^h)\geq \limsup_{\varepsilon\to0} J^{T}(y^\varepsilon(f^h),f^h)\geq\limsup_{\varepsilon\to 0}J^{T}(y^\varepsilon(f^\varepsilon), f^\varepsilon).
\end{align*}
Thus, we deduce
$$\lim_{\varepsilon\to 0}J^{T}( f^\varepsilon)=J^{T}(f^h),$$ and we conclude the strong convergence.

Finally, let us prove that $\overline{y} \to \overline{y}$ and $ \overline{f}\to \overline{f}$ strongly in $L^2(\Omega)$. From the strict convexity of the functional $J^s_{\varepsilon}$, we have that there exist $f_s$, $y_s\in L^{2}(\Omega)$ such that \begin{align*}
     \overline{f}^\varepsilon\wconv f_s,\quad \text{ and } \quad \overline{y}^\varepsilon\wconv y_s,\quad  \text{weakly in } L^{2}(\Omega).
\end{align*} 
From Lemma \ref{Theorem_homogenization_systems_1}, we know that $y_s$ is solution of \eqref{lim_system_stationary}. In the same way as above, combining the weak convergence, we can derive \eqref{des1}, \eqref{des2}, and \eqref{des3} for the functional $J^{s}$. Therefore, we obtain that $y_s=\overline{y}^h$ and $f_{s}=\overline{f}^h$. Furthermore, we have
\begin{align*}
    \lim_{\varepsilon\to 0}J^{s}( \overline{f}^\varepsilon) = J^{s}(\overline{f}^h).
\end{align*}
This proves the strong convergence in $L^2(\Omega)$.


\vspace{5mm}
\section*{Acknowledgments}

M. Hernandez has been funded by the Transregio 154 Project, “Mathematical Modelling, Simulation, and Optimization Using the Example of Gas Networks” of the DFG, project C07, and the fellowship "ANID-DAAD bilateral agreement" 57600326.

E. Zuazua has been funded by the Alexander von Humboldt-Professorship program, the ModConFlex Marie Curie Action, HORIZON-MSCA-2021-DN-01, the COST Action MAT-DYN-NET, the Transregio 154 Project “Mathematical Modelling, Simulation and Optimization Using the Example of Gas Networks” of the DFG, grants PID2020-112617GB-C22 and TED2021-131390B-I00 of MINECO (Spain).
Madrid Government - UAM Agreement for the Excellence of the University Research Staff in the context of the V PRICIT (Regional Programme of Research and Technological Innovation).\\

\vspace{5mm}

\bibliographystyle{abbrv} 
\bibliography{biblio_ut.bib}
\vfill

\end{document}